\documentclass[11pt]{amsart}
\usepackage[utf8]{inputenc}
\usepackage[english]{babel}
\usepackage{cite}
\usepackage{breqn}
\usepackage{amsthm,amsmath,amssymb,hyperref,amsfonts,tikz,adjustbox}
\usepackage{mathrsfs}
\usetikzlibrary{cd}
\numberwithin{equation}{section}
\usepackage[shortlabels]{enumitem}
\usepackage[a4paper, total={6in, 8.5in}]{geometry}
\usepackage{imakeidx}
\usepackage{breqn}
\usepackage{ulem}
\usepackage{comment}

\usepackage{xcolor} 

\usepackage{tikz}
\usetikzlibrary{cd}

\usetikzlibrary{calc} 
\usetikzlibrary{decorations.pathmorphing} 

\tikzset{curve/.style={settings={#1},to path={(\tikztostart)
    .. controls ($(\tikztostart)!\pv{pos}!(\tikztotarget)!\pv{height}!270:(\tikztotarget)$)
    and ($(\tikztostart)!1-\pv{pos}!(\tikztotarget)!\pv{height}!270:(\tikztotarget)$)
    .. (\tikztotarget)\tikztonodes}},
    settings/.code={\tikzset{quiver/.cd,#1}
        \def\pv##1{\pgfkeysvalueof{/tikz/quiver/##1}}},
    quiver/.cd,pos/.initial=0.35,height/.initial=0}

\tikzset{tail reversed/.code={\pgfsetarrowsstart{tikzcd to}}}
\tikzset{2tail/.code={\pgfsetarrowsstart{Implies[reversed]}}}
\tikzset{2tail reversed/.code={\pgfsetarrowsstart{Implies}}}
\tikzset{no body/.style={/tikz/dash pattern=on 0 off 1mm}}

\newtheorem{theorem}{Theorem}[section] 
\newtheorem{corollary}[theorem]{Corollary}
\newtheorem{lemma}[theorem]{Lemma} 
\newtheorem{proposition}[theorem]{Proposition}

\theoremstyle{definition}
\newtheorem{definition}[theorem]{Definition}
\newtheorem{example}[theorem]{Example}

\theoremstyle{remark}
\newtheorem{remark}[theorem]{Remark}

\newcommand*{\im}{\operatorname{im}} 
\newcommand*{\en}{\operatorname{End}}

\newcommand{\der}{\operatorname{Der}}

\newcommand{\F}{\mathcal{F}}
\newcommand{\Hom}{\operatorname{Hom}}

\title[Globalization of Partial group actions on non-associative algebras]{Globalization of Partial Group Actions on Not Necessarily Associative Algebras and Covariant Representations}

\keywords{Partial group actions, partial group representations, non-associative algebras, varieties of algebras}
\subjclass[2020]{Primary 16W22, 17A01 , Secondary 08B99}

\author{Mikhailo Dokuchaev}
\address[M. Dokuchaev]{Departamento de Matem{\'a}tica, Universidade de S\~ao Paulo,
Rua do Mat\~ao, 1010, 05508-090 S\~ao Paulo, Brazil}
\email{dokucha@ime.usp.br}  
\author{Emmanuel Jerez}
\address[E. Jerez]{Guangdong Technion Israel-Institute of Technology, Shantou, Guangdong
Province, China}
\email{ars.ejerez@icloud.com}  
\author{José L. Vilca-Rodríguez}
\address[J. L. Vilca-Rodríguez]{Departamento de Matem{\'a}tica, Universidade de S\~ao Paulo,
Rua do Mat\~ao, 1010, 05508-090 S\~ao Paulo, Brazil}
\email{jvilca@ime.usp.br}

\begin{document}

\begin{abstract}
     We extend the concept of a partial group action to non-associative algebras in a variety \(\mathcal{V}(I)\), solve the globalization problem  within \(\mathcal{V}(I)\) and examine its universal property. It is achieved
     using what we call the ``$\Lambda$-construction'', which we also apply to deal with covariant representations in the associative and Lie algebra settings, considering related categories and constructing an adjoint pair of functors between them.  We also show that the $\Lambda$-construction behaves well with semidirect products of Lie algebras.
\end{abstract}

\maketitle

\tableofcontents

\section*{Introduction}

Partial group actions are a flexible and powerful generalization of classical (global) group actions, they allow a group to act only on ``pieces'' of a mathematical object while retaining much of the formalism and applications of global symmetry. 
They can be produced  by restricting  global actions to not necessarily invariant subobjects. For example,  the projective linear group 
${\rm PGL}(n, {\mathbb C})$ acts globally on the Riemann sphere but only partially on the complex plane ${\mathbb C}$ (see \cite{Article_Kellendonk-Lawson_2004_PAOG}).   This naturally suggests  the question if  a given   partial action can be seen as a restriction of a global one. Under the name of ``maximum local transformation groups'' smooth partial actions of Lie groups on not necessarily Hausdorff manifolds were studied from the globalization point of view   by R. Palais  
\cite[Theorem X, p. 72]{Article_Palais_1957_AGFOTTOTG}. Later, the globalization problem  was independently studied by F. Abadie \cite{Article_Abadie_2003_EAADFPA}  for partial  actions  of  groups on 
topological spaces and on $C^*$-algebras  and by J. Kellendonk and M. V. Lawson   for partial group  actions   on topological spaces and  semilattices \cite{Article_Kellendonk-Lawson_2004_PAOG}. Since then constant attention is payed to  the globalization problem of partial actions and partial coactions:
\cite{Article_Steinberg_2003_PAOGOCC, Article_Megrelishvili-Shroder_2004_Globalization,
Gilbert_2005_Actions,
Ferrero_2006_ParAcSemiprime,
Dokuchaev-DelRio-Simon_2007_Glob,Hollings_2007_PartialActions,Cortes-Ferrero_2009_GlobSemiprime,Article_Gould-Hollings_2009_PartialActions,Article_Alves-Batista_2010_EAFPHA,Article_Dokuchaev-Exel-Simon_2010_GOTPA,Article_Alves-Batista_2011_GTFPHASOTA,Bagio-Paques_2012_Glob,Article_Alvares-Alves-Batista_2013_PHMC,Bemm-Ferrero_2013_Glob,Article_Kudryavtseva_2015_PMAAACORS,Article_Castro-Paques-Cuadros-Sant'Ana_2015_PAOWHASPGAMT,Article_Alves-Batista-Dokuchaev-Paques_2016_GOTPHA,Bagio-Pinedo_2016_Glob,Cortes-Ferrero-Marcos_2016_Categories,Khrypchenko-Novikov_2018_Reflectors,Kraken-Quast-Timmermann_2018_PartialQuantum,Ferraro_2018_ConstructionGlobalization,Nystedt_2018_Partial_Category,Bagio-Paques-Pinedo_2020_RestrExt,Fonseca-Fontes-Martini_2020_Multiplier,Marin-Pinedo_2020_Partial_Groupoid_R-categories,Fontes-Martini-Fonseca_2022_WeakHopf,Article_Saracco-Vercruysse_2022_GFGPC,Article_Saracco-Vercruysse_2022_OTGOGPITCOTSAA,Article_Kudryavtseva-Laan_2023_Globalization,Alves-Ferraza_2024_HopfMoritaAndGlobNonunital,Batista_Castro_Khrypchenko_2024_Monoidal_arXiv,Cortes-Wagner_2024_GlobNonAssoc,Article_Jerez_2024_OTHOPGA,Khrypchenko-Klock_2024_ParGlob,Lautenschlaeger-Tamusiunas_2024_Glob_arXiv,Article_Saracco-Vercruysse_2024_GPCOFCIACAG,Marin-Pinedo-Rodriguez_2025_Partial_Groupoid,Demeneghi-Tasca_2025_Globalization,Article_Demeneghi-Marin-Tasca-Velasco_2025_P-Theorem,Batista-Hautekiet-Vercruysse_2025_Glob_arXiv}.
The initial construction of the globalization  was given for    partial group actions on smooth manifolds  \cite{Article_Palais_1957_AGFOTTOTG},   topological spaces  \cite{Article_Abadie_2003_EAADFPA}, \cite{Article_Kellendonk-Lawson_2004_PAOG}, semilattices \cite{Article_Kellendonk-Lawson_2004_PAOG} and $2$-complexes \cite{Article_Steinberg_2003_PAOGOCC}. The idea  is essentially the same and, as it was pointed out by J. Kellendonk and M. Lawson \cite{Article_Kellendonk-Lawson_2004_PAOG}, it  appeared in a wide variety of areas in the proofs of remarkable results, which can be reformulated in terms of partial group actions. It is a ``partial'' version of the tensor product of $G$-sets of the form $G\otimes _G X.$     The idea is  natural and efficient,  and it was applied with eventual adaptations to produce globalizations in a series of papers.   

For unital associative algebras a construction was given in \cite{Article_Dokuchaev-Exel_2005_AOCPBPAEAAPR}, which is an appropriate subalgebra of the direct power $A^G,$  where 
the group $G$ acts permuting the direct factors. As in the case of the $G\otimes _G X$-tool, this idea was further used in a number of articles with  appropriate adjustments and interesting developments. 

   In \cite{Article_Jerez_2024_OTHOPGA} a construction was given for the globalization of partial group actions on modules,
  which is, loosly speeking, a linearization of the ``partial''  $G\otimes _G X,$ and produced by  a combination of tensor products of modules and that of $G$-sets (see Section~\ref{Generalized partial group actions}). We refer to it as the $\Lambda $-construction. 
  
  In this paper we further  explore the $\Lambda $-construction not only for the globalization of partial actions, but also to deal with covariant representations of partial representations and their dilations.  
  More specifically,  we extend the theory of partial group actions to the setting of non-associative algebras belonging to an arbitrary variety \(\mathcal{V}(I)\) and develop the corresponding notions of partial representations, globalizations, and covariant representations. Our approach follows two complementary threads.
First, given a partial representation \(\pi\) of a group \(G\) by endomorphisms of an algebra \(A\in \mathcal{V}(I)\) (with images that are two-sided ideals), we construct a canonical \(K\)-module \(\Lambda(A)\) that carries a natural algebra structure and a global \(G\)-action induced from \(\pi\).
This construction makes \(\Lambda(A)\) into a non-associative algebra in \(\mathcal{V}(I)\),
and moreover, it acts as a globalization of the partial action induced by \(\pi\) on \(A\).
This result not only generalizes the classical globalization theorem for unital partial group action on unital associative algebras  proved in \cite{Article_Dokuchaev-Exel_2005_AOCPBPAEAAPR}, but also provides a systematic method to globalize partial actions in a wide variety of algebraic contexts.
Second, we consider generalized partial actions of \(G\) on an algebra \(A\in \mathcal{V}(I)\) (where the images of the partial action are subalgebras, not necessarily ideals) and show that the \(\Lambda\)-construction yields a universal globalization \(\Omega(A)\) for such actions.
An importan example that illustrates the deep connection between both threads is the case of partial actions on Lie algebras.
In Example~\ref{e: partial action on U(L) 2} we show the interplay between the two globalization constructions in the case of a partial action on a Lie algebra \(L\) and its induced partial representation on the universal enveloping algebra \(U(L)\).

Next, we study partial covariant representations  in the  associative and Lie algebra settings and the functorial behavior of the globalization process.
For a partial representation  \(\lambda\colon G\to\mathrm{End}_0 (A)\), where $A$ is an associative or Lie algebra and $\mathrm{End}_0 (A)$ denotes  the algebra endomorphisms of $A$,   we define a category of partial covariant representations and show how the \(\Lambda\)-construction lifts these to honest (global) covariant representations of \(\Lambda(\lambda)\).
We then isolate a natural subcategory of ``Dilations'' and prove an adjunction between the lifted functor \(\Lambda\) and a functor from \textit{Dilations} to covariant representations, thereby clarifying the universal character of the construction at the level of representations.

Finally, we verify that the globalization behaves well with respect to semidirect products: when  the Lie algebra \(L\) acts by derivations on the Lie algebra \(M\) and the pair of partial representations \(\lambda\) on \(L\) and \(\pi\) on \(M\) of a group $G$ form a partial covariant representation, the \(\Lambda\)-construction commutes  with  semidirect products; concretely,
\[
\Lambda(L\!\ltimes\!M)\ \cong\ \Lambda(L)\!\ltimes\!\Lambda(M),
\]
and the induced group actions are naturally isomorphic.

\section{Preliminaries}

Let \( K \) be  an associative commutative unital ring. Throughout this work, \( K \) is fixed as the ground ring. Unless stated otherwise, all modules considered are \( K \)-modules,  and we denote the tensor  product  over $K$ of
two \(K\)-modules \(X\) and \(Y\) simply as \(X\otimes Y\).  The capital letter \( G \) will denote a group, and the identity element of an algebraic structure will be denoted by \( 1 \), provided no ambiguity arises. As it is usual, by a non-associative algebra we mean a not necessarily associative algebra.

This work focuses on partial actions and partial representations of groups. For simplicity, the terms \emph{partial action} and \emph{partial representation} will refer specifically to partial group actions and partial group representations, respectively. 

We begin by recalling some foundational concepts related to varieties of algebras, partial actions, and partial representations.

\subsection{Varieties of algebras}
Let $X$ be a nonempty countable set. A  non-associative word of length $n$ on $X$  is a juxtaposition of $n$-elements of $X$  with a specific way of placing parentheses. For instance, if $X=\{a\}$ then $a,aa,a(aa),(aa)a$ are non-associative words of length $\leq 3$. We can multiply two  non-associative words $v, w$ by juxtaposition but we must include parentheses around each factor. For example, the product of $a(aa)$ and $(aa)a$ is the non-associative word $(a(aa))((aa)a)$.  Let us denote by $F(X)$  the vector space with a basis consisting of all non-associative words in $X$. That is, $F(X)$ consists of all non-associative polynomials in $X$ with coefficients in $K$. By extending the product of non-associative words to $F(X)$ via distributivity, we define a multiplication in 
$F(X)$ that makes it a non-associative algebra (without a unit element). The algebra $F(X)$ is called the {\bf free non-associtive algebra} generated by $X$. There is a canonical inclusion $j: X\to F(X)$, and $F(X)$ verifies the following universal property: Given a non-associtive algebra $A$ and a  map $\phi:X\to A$, there exists a unique algebra homomorphism $\widehat{\phi}: F(X)\to A$ such that $\widehat{\phi}\circ j=\phi$.

Let \( A \) be a non-associative algebra. A non-associative polynomial \( f(x_1, \ldots, x_n) \in F(X) \) is called an \textbf{identity} of \( A \) if \( f(a_1, \ldots, a_n) = 0 \) for all \( a_1, \ldots, a_n \in A \). Let \( I \) be a subset of \( F(X) \). The class \( \mathcal{V}(I) \) of all  non-associative algebras satisfying all identities in \( I \) is called the \textbf{variety defined by \( I \)}, and it is clearly a category. One can construct a free algebra in \( \mathcal{V}(I) \) generated by a nonempty set \( Y \) as follows. Let \( T(Y, I) \) be the set of images of \( I \) in \( F(Y) \) under all homomorphisms from \( F(X) \) into \( F(Y) \). Consider the ideal \( J(Y, I) \) in \( F(Y) \) generated by \( T(Y, I) \). Then the algebra \( F_I(Y) = F(Y) / J(Y, I) \) belongs to the variety \( \mathcal{V}(I) \). If \( \nu \) denotes the restriction to \( Y \) of the canonical homomorphism \( F(Y) \to F_I(Y) \), given by \( b \mapsto b + J(Y, I) \), then one can verify that \( F_I(Y) \) satisfies the following universal property: Given a non-associative algebra \( A \in \mathcal{V}(I) \) and a map \( \phi: Y \to A \), there exists a unique algebra homomorphism \( \widehat{\phi}: F_I(Y) \to A \) such that \( \widehat{\phi} \circ \nu = \phi \). The algebra \( F_I(Y) \) is called the \textbf{free algebra generated by \( Y \)} in \( \mathcal{V}(I) \). If the variety \( \mathcal{V}(I) \) is non-trivial, i.e. it contains a nonzero algebra, then the map \( \nu: Y \to F_I(Y) \) is injective, and we can view \( Y \) as a subset of \( F_I(Y) \) (see~\cite[p.~26]{Book_Jacobson_1968_SRJA}). 

 Let $Y$ be a set and denote by $K^{(Y)}$ the  free \( K \)-module with basis $Y.$ Then $K^{(Y)}$ can be seen as a $K$-submodule of the algebra  $ F(Y)$ and the map \( \nu: Y \to F_I(Y) \)  extends to a $K$-linear map    \( i: K^{(Y)} \to F_I(Y) \), which is the restriction of  \( F(Y) \to F_I(Y) \) to $K^{(Y)}.$
Then the above universal property of $ F_I(Y)$ implies that  for any  algebra \( A \in \mathcal{V}(I) \) and any linear map \( \phi: K^{(Y)} \to A \), there exists a unique algebra homomorphism \( \widehat{\phi}: F_I(Y) \to A \) such that \( \widehat{\phi} \circ i = \phi \). As above, the $K$-linear map    \( i \) is injective if  \( \mathcal{V}(I) \)  is non-trivial. For  let $A$ be  a non-zero algebra in  \( \mathcal{V}(I) \) and write 
$B = \oplus _{y\in Y} A_y,$ where $A_y$ is a copy  of  $A.$ Denote by $a_y$ a  non-zero element of $ A_y.$ Then the map $Y\to B$ such that $y\mapsto a_y$  extends to an injective $K$-linear map $\phi : K^{(Y)} \to B,$ and by the above property there exists an  algebra homomorphism \( \widehat{\phi}: F_I(Y) \to B \) such that \( \widehat{\phi} \circ i = \phi \). Since $\phi $ is injective, so is  $i.$

Let $ M $ be an arbitrary \( K \)-module and $Y$ a generating set of $M.$ Then the embedding $Y\to M$ extends to an epimorphism $K^{(Y)} \to M.$ Let $T_M$ be the kernel of this epimorphism and denote by $J_M$ the ideal of  $F_I(Y)$ generated by 
 $i (T_M).$  Then the algebra 
 $$F_I(M): = F_I(Y)/ J_M$$
 belongs to the variety $\mathcal{V}(I),$ and the composition of \( i: K^{(Y)} \to F_I(Y) \) with the quotient map $F_I(Y) \to F_I(M)$ vanishes on 
 $T_M,$ so that it defines a linear map \( j: M \to F_I(M) \). It is readily seen that the following universal property is satisfied:
  Given an algebra \( A \in \mathcal{V}(I) \) and a linear map \( \phi: M \to A \), there exists a unique algebra homomorphism \( \widehat{\phi}: F_I(M) \to A \) such that  \( \widehat{\phi} \circ j = \phi \).  It follows that  up to isomorphism  $F_I(M)$ is  the unique  algebra in $\mathcal{V}(I)$ which satisfies this property   and, in particular, 
  $F_I(M)$ does not depend on the choice of the generating set $Y$ of $M.$
 We refer to \( F_I(M) \) as the \textbf{free algebra generated by \( M \)} in \( \mathcal{V}(I) \). 
 
 \begin{remark}\label{rem:injective_j} Suppose that  $\mathcal{V}(I)$ is a variety which contains the algebras with zero multiplication. Then for any $K$-module $M$ the above defined $K$-linear map $j: M \to F_I(M)$ is injective. Indeed, endowing $M$ with the zero multiplication and applying the universal property of $F_I(M)$ to the identity map $M\to M,$ we immediately obtain that $j$ has to be  injective.
 \end{remark}
 
 Note that Remark~\ref{rem:injective_j} is applicable to such classical varieties as associative algebras, Lie algebras, alternative algebras and Jordan algebras. More generally,  any non-trivial variety of algebras  defined by a system $I$ of 
homogeneous identities satisfies the condition of Remark~\ref{rem:injective_j}. This includes all non-trivial homogeneous varieties of algebras, in particular,  any non-trivial variety of algebras over an infinite  field (see \cite[p.8]{Book_Zhevlakov-Slinko-Shestakov-Shirshov_1982_RTNAR}).

\subsection{Generalized partial group actions}\label{Generalized partial group actions}

In what follows, \( \mathcal{V}(I) \) denotes a variety  of non-associative algebras defined by \( I \).

\begin{definition} \label{d: set partial group action}
    A (set-theoretic) \textbf{partial action} of a group \( G \) on a set \( X \) is a collection of data 
    \[
        \theta = \big( G, X, \{ X_{g} \}_{g \in G}, \{ \theta_{g} \}_{g \in G} \big),
    \]
    where \( \{ X_g \}_{g \in G} \) is a family of subsets \( X_g \subseteq X \) and \( \{ \theta_g \}_{g \in G} \) is a family of bijections \( \theta_g: X_{g^{-1}} \to X_g \) for each \( g \in G \), satisfying the following conditions:
    \begin{enumerate}[(i)]
        \item \( X_1 = X \) and \( \theta_1 = 1_X \) (the identity map on \( X \)),
        \item \( \theta_g(X_{g^{-1}} \cap X_{g^{-1}h}) \subseteq X_g \cap X_h \) for all \( g, h \in G \),
        \item \( \theta_g \theta_h(x) = \theta_{gh}(x) \) for all \( x \in X_{h^{-1}} \cap X_{h^{-1}g^{-1}} \).
    \end{enumerate}
    If every \( X_g \) coincides with \( X \), we say that \( \theta \) is a \textbf{global action} of \( G \) on \( X \).
\end{definition}

\begin{definition} \label{d: partial group action}
 Let \( A \) be a \( K \)-module, and let 
        \[
            \alpha = \big( G, A, \{ A_{g} \}_{g \in G}, \{ \alpha_{g} \}_{g \in G} \big)
        \]
        be set-theoretic partial action of \( G \) on \( A \). Then,
        \begin{enumerate}[(i)]
            \item \(\alpha\) is a \textbf{partial action of \(G\) on the \(K\)-module} \(A\) if each \(A_g\) is a submodule of \(A\), and each \(\alpha_g: A_{g^{-1}} \to A_g\) is a linear isomorphism.

            \item If \(A \in \mathcal{V}(I)\), each \(A_g\) is a bilateral ideal of \(A\), and each \(\alpha_g: A_{g^{-1}} \to A_g\) is an algebra isomorphism, then \(\alpha\) is called a \textbf{partial action of \(G\) on the algebra} \(A\).

            \item If \(A \in \mathcal{V}(I)\), each \(A_g\) is a subalgebra of \(A\), and each \(\alpha_g: A_{g^{-1}} \to A_g\) is an algebra isomorphism, then \(\alpha\) is called a \textbf{ generalized partial action of \(G\) on the algebra} \(A\).
        \end{enumerate}
    \end{definition}

For practical purposes, we will denote the (generalized) partial action \[ \alpha = \big( G, A, \{ A_{g} \}_{g \in G}, \{ \alpha_{g} \}_{g \in G} \big) \] by \( \alpha: G \curvearrowright A \).

A key example of a generalized partial action arises from the canonical restriction of a (global) group action on an algebra. Specifically, let \( B \in \mathcal{V}(I) \) and let \( \beta: G \to \mathrm{Aut}(B) \) be an action of \( G \) on \( B \). If \( A \) is a subalgebra of \( B \), then setting \( A_g = B \cap \beta_g(B) \) and defining \( \alpha_g \) as the restriction of \( \beta_g \) to \( A_{g^{-1}} \), we obtain a generalized partial action of \( G \) on \( A \). This generalized partial action is referred to as the \textbf{restriction} of \( \beta \) to \( A \). Furthermore, if \( A \) is an ideal of \( B \), the restriction of \( \beta \) to \( A \) is a partial action of \( G \) on \( A \).

\begin{definition}\label{equivariant map}
    Let \( A \) and \( B \) be \( K \)-modules or algebras in \( \mathcal{V}(I) \), and let \( \alpha: G \curvearrowright A \) and \( \beta: G \curvearrowright B \) be generalized partial actions. A \textbf{\( G \)-equivariant map} \( \phi: A \to B \) is a linear map (or homomorphism) that satisfies the following conditions:
    \begin{enumerate}[a)]
        \item \( \phi(A_g) \subseteq B_g \) for all \( g \in G \),
        \item \( \phi(\alpha_g(x)) = \beta_g(\phi(x)) \) for all \( g \in G \) and \( x \in A_{g^{-1}} \).
    \end{enumerate}
    Additionally, if \(\phi\) is bijective, and \(\phi^{-1}\) is also \(G\)-equivariant, then \(\phi\) is called an \textbf{isomorphism}.
\end{definition}

Clearly, the collection of all generalized partial actions of \( G \) forms a category, whose morphisms are the \( G \)-equivariant maps.


\begin{definition} \label{d: universal global partial action}
    Let \( M \) be a \( K \)-module, and let \( \alpha: G \curvearrowright M \) be a partial action on \( M \). A pair \( (\theta, \iota) \), where \(  \theta : G \curvearrowright W \) is a global action on a \( K \)-module, and \( \iota: M \to W \) is a \( G \)-equivariant map, is called: 
    \begin{enumerate}
        \item A \textbf{universal global action} of \( \alpha \) if, for any global action \( \beta: G \curvearrowright X \) on a \( K \)-module and any \( G \)-equivariant map \( \psi: M \to X \), there exists a unique \( G \)-equivariant map \( \widetilde{\psi}: W \to X \) such that the following diagram commutes:
            \[
                \begin{tikzcd}
                    M \arrow[r, "\psi"] \arrow[d, "\iota"'] & X \\
                    W \arrow[ur, dashed, "\widetilde{\psi}"']
                \end{tikzcd}
            \]

        \item A \textbf{globalization} of \( \alpha \) if \( \iota: M \to W \) determines an isomorphism between \( \alpha \) and the restriction of \( \theta \) to \( \iota(M) \), and \( W = \sum_{g \in G} \theta_g(\iota(M)) \). When a globalization of \( \alpha \) exists, we say that \( \alpha \) is \textbf{globalizable}.

    \end{enumerate}
\end{definition}

\begin{remark}
    It is important to note that a universal global action \( (\theta, \iota) \) for a partial action 
   \( \alpha: G \curvearrowright M \) is unique up to isomorphism.
\end{remark}

In \cite[Theorem 2.26]{Article_Jerez_2024_OTHOPGA}, one can find a construction of the universal global action for a partial action on a module.  In the next section, we exploit this result to obtain a universal globalization for a partial action on an algebra in \( \mathcal{V}(I) \). For convenience, we recall the construction presented in~\cite{Article_Jerez_2024_OTHOPGA}.

Let \( \alpha: G \curvearrowright M \) be a partial group action of \( G \) on a \( K \)-module \( M \). Consider the \( K \)-module \( \Lambda(M) \) defined as the quotient of \(  (KG \otimes M) /\mathcal{K}_{\alpha}\), where \(\mathcal{K}_{\alpha}\) is the \( K \)-submodule generated by
\[
    \{ g \otimes \alpha_h(m) - gh \otimes m : g, h \in G \text{ and } m \in M_{h^{-1}} \}.
\]
We denote the class of \( g \otimes m \) in \( \Lambda(M) \) by \( \lfloor g, m \rfloor \), and let \( \Lambda(\alpha): G \curvearrowright \Lambda(M) \) be the global group action defined as follows:
\begin{align} \label{eq: Theta}
    \begin{split}
        \Lambda(\alpha): G \times \Lambda(M) &\to \Lambda(M), \\
        (g, \lfloor h, m \rfloor) &\mapsto \lfloor gh, m \rfloor.
    \end{split}
\end{align}
Consider the linear map
\begin{align} \label{eq: iota equivariant map}
    \begin{split}
        \iota: M &\to \Lambda(M), \\
        m &\mapsto \lfloor 1, m \rfloor.
    \end{split}
\end{align}
By direct computation, one can verify that \( \iota \) is a \( G \)-equivariant map.

\begin{theorem}[\hspace{-3pt} Theorem 2.26\cite{Article_Jerez_2024_OTHOPGA}] \label{t: universal global action}
    Let \( \alpha: G \curvearrowright M \) be a partial group action on the \( K \)-module \( M \), and let \( \Lambda(\alpha): G \curvearrowright \Lambda(M) \) and \( \iota: M \to \Lambda(M) \) be as defined in \eqref{eq: Theta} and \eqref{eq: iota equivariant map}, respectively. Then, \( (\Lambda(\alpha), \iota) \) is the universal global action of \( \alpha \).
\end{theorem}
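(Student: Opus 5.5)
We need to show that $\Lambda(\alpha)$ is a well-defined global action, that $\iota$ is $G$-equivariant, and that $(\Lambda(\alpha),\iota)$ has the universal property of Definition~\ref{d: universal global partial action}.

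\emph{Well-definedness of $\Lambda(\alpha)$.} For each $g\in G$, let $L_g$ act on $KG\otimes M$ by $h\otimes m\mapsto gh\otimes m$. This is a linear global action. It preserves $\mathcal{K}_\alpha$, since it sends the generator $h\otimes \alpha_k(m)-hk\otimes m$ to $gh\otimes\alpha_k(m)-ghk\otimes m$, which is again a generator. Hence $L_g$ descends to the quotient $\Lambda(M)$, giving $\Lambda(\alpha)_g\lfloor h,m\rfloor=\lfloor gh,m\rfloor$. The relations $\Lambda(\alpha)_1=\mathrm{id}$ and $\Lambda(\alpha)_g\Lambda(\alpha)_{g'}=\Lambda(\alpha)_{gg'}$ are inherited from $KG\otimes M$, so $\Lambda(\alpha)$ is a global action.

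\emph{Equivariance of $\iota$.} Condition a) is vacuous, since the target action is global. For b), take $m\in M_{g^{-1}}$. Then
\[
\iota(\alpha_g(m))=\lfloor 1,\alpha_g(m)\rfloor=\lfloor g,m\rfloor=\Lambda(\alpha)_g\iota(m),
\]
using the defining relation with $1$ in place of $g$ and $h=g$.

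\emph{Universal property.} Let $\beta:G\curvearrowright X$ be a global action and $\psi:M\to X$ a $G$-equivariant map. Define $\Psi:KG\otimes M\to X$ by $g\otimes m\mapsto \beta_g(\psi(m))$; it is well defined and linear by the universal property of the tensor product. $\Psi$ vanishes on $\mathcal{K}_\alpha$: for $m\in M_{h^{-1}}$, equivariance of $\psi$ gives
\[
\beta_g\psi(\alpha_h(m))=\beta_g\beta_h\psi(m)=\beta_{gh}\psi(m).
\]
So $\Psi$ induces $\widetilde\psi:\Lambda(M)\to X$ with $\widetilde\psi\lfloor g,m\rfloor=\beta_g\psi(m)$.

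We then check three things.
\begin{enumerate}[(i)]
\item $\widetilde\psi\circ\iota=\psi$, which holds because $\beta_1=\mathrm{id}$.
\item $\widetilde\psi$ is $G$-equivariant: $\widetilde\psi\lfloor gh,m\rfloor=\beta_g\beta_h\psi(m)$.
\item $\widetilde\psi$ is unique. Every element of $\Lambda(M)$ is a sum of elements $\lfloor g,m\rfloor=\Lambda(\alpha)_g\iota(m)$. Any equivariant $\phi$ with $\phi\iota=\psi$ must therefore send $\lfloor g,m\rfloor$ to $\beta_g\psi(m)$, so $\phi=\widetilde\psi$.
\end{enumerate}

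None of these steps is hard. The only point needing care is confirming that $\mathcal{K}_\alpha$ is stable under left translation and is killed by $\Psi$. In the second check, equivariance of $\psi$ is used only for elements of $M_{h^{-1}}$, which is exactly where the generators of $\mathcal{K}_\alpha$ are defined.
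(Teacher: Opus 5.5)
Your proof is correct and takes essentially the same route as the cited proof, which this paper does not reproduce. As the remark following the theorem records, the factorization is obtained by descending $g\otimes m\mapsto\beta_g(\psi(m))$ from $KG\otimes M$ to $\Lambda(M)$, which is the content of Lemma~\ref{l: computation tool for Lambda}; your equivariance check via $\lfloor 1,\alpha_g(m)\rfloor=\lfloor g,m\rfloor$ and your uniqueness argument from the spanning elements $\lfloor g,m\rfloor=\Lambda(\alpha)_g\iota(m)$ complete it exactly as needed.
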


\begin{remark} Note that by the proof of \cite[Theorem 2.26]{Article_Jerez_2024_OTHOPGA} if   \( \beta: G \curvearrowright X \) is a global action on a \( K \)-module and  \( \psi: M \to X \) is a  
    \( G \)-equivariant map, then the map \( \widetilde{\psi}: \Lambda(M) \to X \) is determined by $ \widetilde{\psi} (\lfloor g, m \rfloor) = \beta_g ( \psi(m)), g\in G, m \in M.$
\end{remark}

The following lemma follows directly from the definition of \(\mathcal{K}_{\alpha}\)  and will be useful in some subsequent arguments. 

\begin{lemma}\label{l: computation tool for Lambda}
    Let \(f: KG \otimes M \to X\) be a linear map such that \(f(g \otimes \alpha_{h}(m)) = f(gh \otimes m)\) for all \(g, h \in G\) and \(m \in M_{h^{-1}}\). Then, there exists a unique linear map \(\widetilde{f}: \Lambda(M) \to X\) such that \(\widetilde{f}(\lfloor g, m \rfloor) = f(g \otimes m)\) for all \(g \in G\) and \(m \in M\).
\end{lemma}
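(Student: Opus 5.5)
This is the universal property of the quotient $\Lambda(M)=(KG\otimes M)/\mathcal{K}_{\alpha}$, so the plan is simply to apply it. The linear map $f\colon KG\otimes M\to X$ is given. I will show that $f$ vanishes on $\mathcal{K}_{\alpha}$ and then invoke the factorization of linear maps through quotient modules.

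First, the hypothesis $f(g\otimes \alpha_h(m))=f(gh\otimes m)$ for $g,h\in G$ and $m\in M_{h^{-1}}$ says exactly that
\[
f\bigl(g\otimes\alpha_h(m)-gh\otimes m\bigr)=0
\]
for every generator of $\mathcal{K}_{\alpha}$. Since $f$ is $K$-linear, its kernel is a $K$-submodule of $KG\otimes M$. That kernel contains the generating set of $\mathcal{K}_{\alpha}$, so it contains all of $\mathcal{K}_{\alpha}$.

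Second, by the universal property of the quotient module, there is a unique linear map $\widetilde f\colon \Lambda(M)\to X$ with $\widetilde f\circ q=f$, where $q\colon KG\otimes M\to\Lambda(M)$ is the canonical projection. Because $q(g\otimes m)=\lfloor g,m\rfloor$, this gives $\widetilde f(\lfloor g,m\rfloor)=f(g\otimes m)$.

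Third, for uniqueness, note that the elements $g\otimes m$ with $g\in G$ and $m\in M$ span $KG\otimes M$ over $K$, since $G$ is a $K$-basis of $KG$. Hence their images $\lfloor g,m\rfloor$ span $\Lambda(M)$. Any linear map satisfying the displayed condition $\widetilde f(\lfloor g,m\rfloor)=f(g\otimes m)$ is therefore determined on a spanning set, and so is unique.

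None of these steps is a real obstacle. The only point needing care is to note explicitly that generators suffice, by linearity of $f$, and that the pure tensors $g\otimes m$ span $KG\otimes M$.
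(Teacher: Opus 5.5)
Your proposal is correct and matches the paper, which gives no separate proof and simply says the lemma follows directly from the definition of \(\mathcal{K}_{\alpha}\). That is, it is the universal property of the quotient \((KG\otimes M)/\mathcal{K}_{\alpha}\), with uniqueness because the classes \(\lfloor g,m\rfloor\) span \(\Lambda(M)\), just as you argue.
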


\subsection{Partial group representations}

\begin{definition}\label{d: partial representation}
    Let \(G\) be a group and \(S\) be a monoid.
    A {\bf partial representation} of \(G\) into \(S\) is a map \(\pi: G \to S\) such that for all \(g, h \in G\), the following conditions are satisfied:
    \begin{enumerate}[(i)]
        \item \(\pi_1 = 1_S\); 
        \item \(\pi_{g^{-1}}\pi_{g}\pi_{h} = \pi_{g^{-1}}\pi_{gh}\);
        \item \(\pi_{g}\pi_{h}\pi_{h^{-1}} = \pi_{gh}\pi_{h^{-1}}\).
    \end{enumerate}
    For the sake of simplicity, we will denote \(\pi_g \pi_{g^{-1}}\) by \(\varepsilon_g\).
   We say that the partial representation $\pi $ is {\bf global} if 
     \( \pi_{g}\pi_{h} = \pi_{gh}\) for all $g,h \in G.$
\end{definition}

 Note that it immediately follows from Definition~\ref{d: partial representation} that  a partial representation
\(\pi: G \to S\) is global if and only if   
\(\varepsilon_g =1\) for all $g\in G.$

\begin{remark}\label{r: partial actions induced by partial representations}
    Partial actions and partial representations on modules are closely related.
    In fact, let $M$ be a $K$-module $M$, and  \(\pi: G \to \operatorname{End}_{K}(M)\) a partial representation,  where $\operatorname{End}_K(M)$ denote the set of all endomorphisms of the $K$-module $M$. Then we can define a partial action \(\alpha: G \curvearrowright M\) by setting \(M_g = \im \pi_g\) and \(\alpha_g = \pi_g|_{M_{g^{-1}}}\)  (see \cite[Example 2.3]{Article_Jerez_2024_OTHOPGA}).
    We refer to \(\alpha\) as the partial action induced by \(\pi\).
    Moreover, morphisms between partial representations naturally induce morphisms between the corresponding partial actions.
\end{remark}

Partial actions on modules arising from partial representations can be globalized.

\begin{theorem}[Theorem 2.33 \cite{Article_Jerez_2024_OTHOPGA}]\label{t: globalization of partial representations}
    Let $\pi: G \to \operatorname{End}_{K}(M)$ be a partial group representation, and let $\alpha: G \curvearrowright M$ be the partial group action induced by $\pi$. Then, $\Lambda(\alpha): G \curvearrowright \Lambda(M)$  is the universal globalization of $\alpha$.
\end{theorem}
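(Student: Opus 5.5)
We have to prove Theorem~\ref{t: globalization of partial representations}: when $\alpha$ is induced by a partial representation $\pi$, the pair $(\Lambda(\alpha),\iota)$ is a globalization. By Theorem~\ref{t: universal global action} it is already the universal global action, so the remaining tasks are three. First, $\Lambda(M)=\sum_g \Lambda(\alpha)_g(\iota(M))$. Second, $\iota$ is injective. Third, $\iota$ identifies $\alpha$ with the restriction of $\Lambda(\alpha)$ to $\iota(M)$, meaning $\iota(M)\cap \Lambda(\alpha)_g(\iota(M))=\iota(M_g)$ and the maps match.

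The first point is immediate, since $\lfloor g,m\rfloor=\Lambda(\alpha)_g\lfloor 1,m\rfloor$.

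For injectivity and the intersection statement, I plan to build a linear retraction $E\colon \Lambda(M)\to M$ using Lemma~\ref{l: computation tool for Lambda}. Define $f(g\otimes m)=\pi_g(m)$. We must check $f(g\otimes \alpha_h(m))=f(gh\otimes m)$ for $m\in M_{h^{-1}}=\im\pi_{h^{-1}}$. Write $m=\pi_{h^{-1}}(n)$. The left side is $\pi_g\pi_h\pi_{h^{-1}}(n)$ and the right side is $\pi_{gh}\pi_{h^{-1}}(n)$; these agree by axiom (iii) of Definition~\ref{d: partial representation}. So $E\lfloor g,m\rfloor=\pi_g(m)$ is well defined, and $E\circ\iota=\mathrm{id}_M$ because $\pi_1=1$. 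This gives injectivity.

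For the intersection, the key computation is $E(\Lambda(\alpha)_g\iota(m))=\pi_g(m)\in M_g$. Suppose $x=\iota(m')=\lfloor g,m\rfloor$. Applying $E$ gives $m'=\pi_g(m)$, so $m'\in M_g$. Conversely, if $m'=\pi_g\pi_{g^{-1}}(n)\in M_g$, then $\iota(m')=\lfloor 1,\alpha_g(\pi_{g^{-1}}n)\rfloor=\lfloor g,\pi_{g^{-1}}n\rfloor\in\Lambda(\alpha)_g(\iota(M))$, by the defining relation of $\mathcal{K}_\alpha$ with $h=g$. Compatibility of the actions follows from the same relation: for $m\in M_{g^{-1}}$ we have $\Lambda(\alpha)_g\iota(m)=\lfloor g,m\rfloor=\lfloor 1,\alpha_g(m)\rfloor=\iota(\alpha_g m)$. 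Hence $\iota$ is a $G$-equivariant isomorphism onto the restriction, and its inverse $E|_{\iota(M)}$ is also equivariant.

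The main obstacle is the subtle point in the intersection step. An element of $\iota(M)\cap\Lambda(\alpha)_g(\iota(M))$ must be written as a single $\lfloor g,m\rfloor$, which works because $\Lambda(\alpha)_g\iota$ is linear, so its image is exactly $\{\lfloor g,m\rfloor\}$. Beyond that, the argument reduces to verifying that $E$ is well defined, which is exactly where axiom (iii) enters.
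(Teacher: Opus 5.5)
Your proof is correct and follows the route the paper relies on. The retraction $\tau\colon\Lambda(M)\to M$, $\lfloor g,m\rfloor\mapsto\pi_g(m)$, is well defined by Lemma~\ref{l: computation tool for Lambda} together with axiom (iii), and it gives injectivity of $\iota$ via $\tau\circ\iota=1_M$; applying it to $\iota(m')=\lfloor g,m\rfloor$ then yields $m'=\pi_g(m)\in M_g$. This is exactly the argument the paper writes out for the algebra version in Corollary~\ref{c: universal globalization 1}; the module statement itself is only quoted from the cited source. The one step you leave implicit, that every element of $M_g=\im\pi_g$ has the form $\pi_g\pi_{g^{-1}}(n)$, follows from $\varepsilon_g\pi_g=\pi_g$, which is axiom (iii) with $h=g^{-1}$.
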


We are interested on partial representations of groups into the endomorphism monoid of a non-associative algebra.
Let  \(A \in \mathcal{V}(I)\), and let \(\operatorname{End}_0(A)\) be the monoid of all algebra endomorphisms of \(A\) (if \(A\) possesses a unit element, we do not require the endomorphisms to preserve the unit element). Thus, we define:
\begin{definition}
    Let \(\pi: G \to \operatorname{End}_0(A)\) be a partial representation of \(G\) into \(\operatorname{End}_0(A)\), then we say that \(\pi\) is a \textbf{ partial representation of \(G\) on the algebra \(A\)}, if moreover, for all \(g \in G\) we have that \(\im \pi_g\) is a bilateral ideal of \(A\) we say that \(\pi\) is an {\bf ideal partial representation} of \(G\) on \(A\).
\end{definition}

Recall that a partial action $\alpha: G\curvearrowright A$ on a unital associative algebra $A$ is said to be unital if each ideal $A_g$ is  a unital algebra, i.e. the ideal $A_g$ is generated by an idempotent which is central in $A.$

\begin{proposition}\label{prop:one-to-one}
    Let \(A\) be a unital associative algebra.
    Then, there is a one-to-one correspondence between ideal partial representations  of \(G\) on \(A\) and unital partial actions of \(G\) on \(A\).
\end{proposition}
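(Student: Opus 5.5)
We will construct maps in both directions and check that they are mutually inverse. The main tool is the standard fact that in a unital associative algebra an idempotent endomorphism-compatible ideal structure is governed by central idempotents.

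\emph{From partial representations to partial actions.} Let \(\pi\colon G\to\operatorname{End}_0(A)\) be an ideal partial representation, and let \(\alpha\) be the partial action induced by \(\pi\) as in Remark~\ref{r: partial actions induced by partial representations}, so that \(A_g=\im\pi_g\) and \(\alpha_g=\pi_g|_{A_{g^{-1}}}\).

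First, \(\varepsilon_g=\pi_g\pi_{g^{-1}}\) is idempotent. Indeed, axioms (ii)--(iii) give \(\pi_g\pi_{g^{-1}}\pi_g=\pi_g\), hence \(\varepsilon_g^2=\varepsilon_g\). This identity also shows \(\im\varepsilon_g=\im\pi_g=A_g\), and that \(\varepsilon_g\) restricts to the identity on \(A_g\).

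Next, put \(1_g:=\pi_g(1)\). Since \(\pi_g\) is multiplicative, \(1_g\) is idempotent. Because \(A_g\) is a two-sided ideal, for every \(a=\pi_g(x)\in A_g\) we get \(1_g a=\pi_g(1)\pi_g(x)=\pi_g(x)=a\), and likewise \(a1_g=a\). Thus \(1_g\) is the unit of \(A_g\).

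Centrality of \(1_g\) follows from the ideal property: for any \(b\in A\), both \(1_g b\) and \(b1_g\) lie in \(A_g\). Hence \(1_g b=(1_g b)1_g\) and \(b1_g=1_g(b1_g)\), and associativity makes both equal to \(1_g b1_g\). Consequently \(A_g=A1_g\) is generated by a central idempotent.

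Finally, \(\alpha_g\colon A_{g^{-1}}\to A_g\) is an algebra isomorphism with inverse \(\alpha_{g^{-1}}\). This uses \(\varepsilon_{g^{-1}}|_{A_{g^{-1}}}=\mathrm{id}\) together with the identity \(\pi_g\pi_{g^{-1}}\pi_g=\pi_g\). So \(\alpha\) is a unital partial action.

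\emph{From partial actions to partial representations.} Let \(\alpha\) be a unital partial action with central idempotents \(1_g\), so that \(A_g=A1_g\). Define
\[
\pi_g(a):=\alpha_g(a1_{g^{-1}}).
\]
Since \(1_{g^{-1}}\) is central, \(a\mapsto a1_{g^{-1}}\) is an algebra endomorphism of \(A\), so \(\pi_g\in\operatorname{End}_0(A)\), and \(\im\pi_g=A_g\) is an ideal. Clearly \(\pi_1=\mathrm{id}\).

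The main obstacle is checking axioms (ii) and (iii). We would use the known identities for unital partial actions,
\[
\alpha_g(A_{g^{-1}}\cap A_h)=A_g\cap A_{gh},\qquad \alpha_g(x1_h)=\alpha_g(x)1_{gh}\quad (x\in A_{g^{-1}}).
\]
These come from Definition~\ref{d: set partial group action}(ii) applied to the unital ideal \(A_{g^{-1}}\cap A_h=A1_{g^{-1}}1_h\), together with the fact that an isomorphism of unital ideals preserves their units.

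For (iii), we compute
\[
\pi_g\pi_h\pi_{h^{-1}}(a)=\alpha_g\bigl(\alpha_h(\alpha_{h^{-1}}(a1_h))1_{g^{-1}}\bigr)=\alpha_g\bigl(a1_h1_{g^{-1}}\bigr).
\]
Since \(a1_h1_{g^{-1}}\in A_h\cap A_{g^{-1}}\), this element has the form \(\alpha_h(y)\) with \(y\in A_{h^{-1}}\cap A_{h^{-1}g^{-1}}\). Condition (iii) of Definition~\ref{d: set partial group action} then gives \(\alpha_g\alpha_h(y)=\alpha_{gh}(y)\), and we compare this with \(\pi_{gh}\pi_{h^{-1}}(a)\). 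Axiom (ii) follows symmetrically, or by the analogous computation with \(g^{-1}\).

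\emph{Bijectivity of the correspondence.} Starting from \(\pi\), we have \(\pi_g(a)=\pi_g(a)1_g=\pi_g\bigl(a\pi_{g^{-1}}(1)\bigr)\), which recovers \(\pi\) from \(\alpha\). Conversely, starting from \(\alpha\), the constructed \(\pi\) satisfies \(\pi_g|_{A_{g^{-1}}}=\alpha_g\) and \(\im\pi_g=A_g\), so it returns \(\alpha\). Hence the two constructions are mutually inverse.
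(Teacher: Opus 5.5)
Your proposal is correct and follows the same route as the paper: $\pi$ induces $\alpha$ with $A_g=\im\pi_g$ and unit $1_g=\pi_g(1)$, and conversely $\pi_g(a)=\alpha_g(1_{g^{-1}}a)$. The paper only sketches this, so your checks fill in details it leaves implicit: centrality of $\pi_g(1)$, axioms (ii)--(iii) via the standard identities for unital partial actions, and mutual inverseness. In that last step the equality $\pi_g(a)=\pi_g(a\pi_{g^{-1}}(1))$ is just \eqref{eq: relations of pi} with $h=1$, $b=1$.
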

\begin{proof}
    Let \(\alpha: G \curvearrowright A\) be a unital partial action. For each $g\in G$, denote by $1_g$ the unity of the ideal  $A_g$. We can define an  ideal partial representation \(\pi: G \to \operatorname{End}_0(A)\) by setting \(\pi_g(a) = \alpha_g(1_{g^{-1}} a)\) for all \(g \in G\) and \(a \in A\).
    On the other  hand,  Remark~\ref{r: partial actions induced by partial representations} shows that the partial action on the $K$-module $A$ induced by an  ideal  partial  representation is a   partial action on the algebra $A$ such that $\pi _g (1)$ is the unity element of $\pi _g (A).$
\end{proof}

Let \( \pi: G \to \operatorname{End}_0(A) \) be a partial representation.  Note that for \( g \in G \) the element   \( \varepsilon_g = \pi_g \pi_{g^{-1}} \) is an idempotent and satisfies \( \varepsilon_g \pi_g = \pi_g \). Observe also that  conditions (ii) and (iii) in Definition~\ref{d: partial representation} can be rewritten as:
\[
    \pi_g \pi_h = \pi_{gh} \varepsilon_{h^{-1}} \quad \text{and} \quad \pi_g \pi_h = \varepsilon_g \pi_{gh}.
\]

If \( \pi \) is    ideal  partial representation, then for all \( a, b \in A \) and \( g, h \in G \), the following useful relation holds:
\begin{equation} \label{eq: relations of pi}
    \pi_g(a) \pi_h(b) = \pi_g(a \pi_{g^{-1}h}(b)).
\end{equation}
To verify this, observe that
\[
    \pi_g(a \pi_{g^{-1}h}(b)) = \pi_g(a) \pi_g(\pi_{g^{-1}h}(b)) = \varepsilon_g \pi_g(a) \varepsilon_g\pi_h(b) = \varepsilon_g(\pi_g(a) \pi_h(b)) = \pi_g(a) \pi_h(b).
\]

It is important to notice that if $\pi: G\to \operatorname{End}_K(M)$ is a partial representation, where $M$ is a $K$-module, then we can consider the mapping
\begin{align} \label{eq: def tau}
    \begin{split}
        \tau: \Lambda(M) &\to M \\
        \lfloor g, m \rfloor &\mapsto \pi_g(m) .
    \end{split}
\end{align}
Note that for $g,h\in G$ and $m\in \im \pi_{h^{-1}}$  we have 
$$\pi_g\pi_h(m)=\pi_{gh}\varepsilon_{h^{-1}}(m)=\pi_{gh}(m).$$
Applyig Lemma~\ref{l: computation tool for Lambda}, we  concluded that $\tau$ is a well-defined $K$-linear map. Moreover,  $\tau$ satisfies the relation $\tau\circ \iota=1_M$.

\section{Globalization of partial actions on varieties of algebras}

\subsection{Ideal partial representations}
In this section all partial representations are  assumed to be  ideal, unless stated otherwise.

\begin{definition}\label{d: globalization of partial group actions}
    Let \(A\) be an algebra in \(\mathcal{V}(I)\) and \(\alpha\) be a partial group action of \(G\) on  the algebra  \(A\).
    A \textbf{globalization} of \(\alpha\) is a pair \((\Theta, \iota)\), where \(\Theta: G \curvearrowright B\) is a global action on an algebra in \(\mathcal{V}(I)\) and \(\iota: A \to B\) is an injective \(G\)-equivariant map, such that:
    \begin{enumerate}[(i)]
        \item \(\iota(A)\) is a bilateral ideal of \(B\);
        \item \(\alpha\) is isomorphic to the restriction of \(\Theta\) to \(\iota(A)\);
        \item \(B = \sum_{g \in G} \Theta_g(\iota(A))\).
    \end{enumerate}
\end{definition}

In this section, we will show that if the partial group action \(\alpha\) on \(A\) arises from an  ideal partial representation  of \(G\) on \(A\), then there exists a globalization for \(\alpha\).
Moreover, the globalization that we will obtain is universal among all globalizations of \(\alpha\).

Let \( A \) be an algebra in \( \mathcal{V}(I) \), \( \pi: G \to \operatorname{End}_0(A) \)  an  ideal partial representation of \( G \) on \(A\), and \( \alpha \) the partial action on the algebra \( A \) induced by \( \pi \). Using the construction in~\cite{Article_Jerez_2024_OTHOPGA}, we  shall obtain a globalization for \( \alpha \). To this end, as in Subsection~\ref{Generalized partial group actions}, let \( \Lambda(A) \) be the quotient of the \( K \)-module \( KG \otimes A \) by the \( K \)-submodule generated by the set 
\[
    \{ g \otimes \alpha_h(a) - gh \otimes a : g, h \in G \text{ and } a \in A_{h^{-1}} \}.
\]

By Theorem~\ref{t: universal global action}, \( (\Lambda(\alpha), \iota) \) is a universal global action of the partial action of \( G \) on the \( K \)-module \( A \) induced by \( \pi \), where \( \Lambda(\alpha): G \curvearrowright \Lambda(A) \) and \( \iota: A \to \Lambda(A) \) are as in \eqref{eq: Theta} and \eqref{eq: iota equivariant map}, respectively. Throughout this section, for  notational convenience, we denote the action \( \Lambda(\alpha) \) by \( \Theta \).

\begin{lemma} \label{l: product}
    Let \( A \) be an algebra in \( \mathcal{V}(I) \), and \( \pi: G \to \operatorname{End}_0(A) \)  an ideal partial representation  of $G$ on \(A\). Then, the map 
    \[
        \cdot : \Lambda(A) \times \Lambda(A) \to \Lambda(A)
    \]
    defined by 
    \[
        \lfloor g, a \rfloor \cdot \lfloor h, b \rfloor := \lfloor g, a \pi_{g^{-1}h}(b) \rfloor
    \]
    is a well-defined \( K \)-bilinear map.
    Consequently, \(\Lambda(A)\) is a non-associative algebra.
    Moreover, the multiplication satisfies 
   \begin{equation} \label{eq: multiplication equivalence}
        \lfloor g, a \rfloor \cdot \lfloor h, b \rfloor
        = \lfloor h, \pi_{h^{-1}g}(a) b \rfloor,
    \end{equation} for all $g,h \in G,$ $a,b \in A.$
    
\end{lemma}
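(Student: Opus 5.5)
We define the product by fixing one variable at a time and applying Lemma~\ref{l: computation tool for Lambda} twice. The key identity is \eqref{eq: relations of pi}, which for an ideal partial representation says $\pi_g(a)\pi_h(b)=\pi_g(a\pi_{g^{-1}h}(b))$. We also use the identities $\pi_g\pi_h=\pi_{gh}\varepsilon_{h^{-1}}=\varepsilon_g\pi_{gh}$, and the fact that $\alpha_h(a)=\pi_h(a)$ for $a\in A_{h^{-1}}=\im\pi_{h^{-1}}$, where $\varepsilon_{h^{-1}}(a)=a$.

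\textbf{Step 1: fix the right-hand factor.} Fix $h\in G$ and $b\in A$, and consider the linear map $f_{h,b}\colon KG\otimes A\to\Lambda(A)$ given by $g\otimes a\mapsto\lfloor g,a\pi_{g^{-1}h}(b)\rfloor$. To apply Lemma~\ref{l: computation tool for Lambda} we must check that
\[
\lfloor g,\pi_k(a)\pi_{g^{-1}h}(b)\rfloor=\lfloor gk,a\pi_{k^{-1}g^{-1}h}(b)\rfloor
\qquad\text{for } a\in\im\pi_{k^{-1}}.
\]
By \eqref{eq: relations of pi}, $\pi_k(a)\pi_{g^{-1}h}(b)=\pi_k\big(a\pi_{k^{-1}g^{-1}h}(b)\big)$. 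The element $c=a\pi_{k^{-1}g^{-1}h}(b)$ lies in $\im\pi_{k^{-1}}$ because this image is an ideal. So $\lfloor g,\pi_k(c)\rfloor=\lfloor g,\alpha_k(c)\rfloor=\lfloor gk,c\rfloor$ in $\Lambda(A)$, and we obtain a linear map $\Lambda(A)\to\Lambda(A)$ sending $\lfloor g,a\rfloor\mapsto\lfloor g,a\pi_{g^{-1}h}(b)\rfloor$.

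\textbf{Step 2: the symmetric formula and the left-hand factor.} Before fixing the left factor, we prove \eqref{eq: multiplication equivalence} at the level of representatives, which makes the second variable easy. We have $a\pi_{g^{-1}h}(b)\in\im\pi_g\pi_{g^{-1}}$? Not needed. Instead, write $\pi_{g^{-1}h}(b)=\varepsilon_{g^{-1}}\pi_{g^{-1}h}(b)$? That holds only up to the ideal property, so we argue as follows. Set $d=\pi_{h^{-1}g}(a)\,b$, which lies in $\im\pi_{h^{-1}g}$ since that image is an ideal. Then
\[
\lfloor h,d\rfloor=\lfloor h,\alpha_{h^{-1}g}(\pi_{g^{-1}h}(d))\rfloor=\lfloor g,\pi_{g^{-1}h}(d)\rfloor.
\]
By multiplicativity, $\pi_{g^{-1}h}(d)=\pi_{g^{-1}h}\pi_{h^{-1}g}(a)\,\pi_{g^{-1}h}(b)=\varepsilon_{g^{-1}h}(a)\,\pi_{g^{-1}h}(b)$. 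Since $\pi_{g^{-1}h}(b)\in\im\varepsilon_{g^{-1}h}$ and $\varepsilon_{g^{-1}h}$ is an idempotent algebra endomorphism onto an ideal, we get $\varepsilon(a)\varepsilon(b')=\varepsilon(a\,b')$ with $b'=\pi_{g^{-1}h}(b)=\varepsilon(b')$, so this equals $\varepsilon_{g^{-1}h}\big(a\pi_{g^{-1}h}(b)\big)$. Finally we need $\lfloor g,\varepsilon_{g^{-1}h}(x)\rfloor=\lfloor g,x\rfloor$ for $x=a\pi_{g^{-1}h}(b)\in\im\pi_{g^{-1}h}$, and this is immediate since $\varepsilon_{g^{-1}h}$ is the identity on $\im\pi_{g^{-1}h}$.

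With this in hand, the expression defines, for fixed $(g,a)$, a map in $(h,b)$ given by the symmetric formula $\lfloor h,\pi_{h^{-1}g}(a)b\rfloor$, and this is well defined by the mirror of Step~1, using the right-handed version of \eqref{eq: relations of pi}. (That version is proved the same way: $\pi_g(a)\pi_h(b)=\pi_h(\pi_{h^{-1}g}(a)b)$.) A function that is well defined in each variable separately and bilinear on generators descends to a bilinear map on $\Lambda(A)\times\Lambda(A)$. Concretely, the map of Step~1 gives a bilinear map $KG\otimes A\times\Lambda(A)\to\Lambda(A)$ after also using linearity in $b$, and Step~2 shows that it kills $\mathcal K_\alpha$ in the second slot.

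The main obstacle is the bookkeeping in Step~2, namely showing that $\lfloor g,\varepsilon_{g^{-1}h}(x)\rfloor=\lfloor g,x\rfloor$ and that both formulas agree. This rests on the images being ideals, so that the relevant products land in $\im\pi_k$, where $\alpha_k=\pi_k$ and the relations of $\mathcal K_\alpha$ apply.
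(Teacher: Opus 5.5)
Your proof is correct and essentially follows the paper's route. You descend one slot at a time with Lemma~\ref{l: computation tool for Lambda}, handling the left slot exactly as the paper does (via \eqref{eq: relations of pi} and the fact that $a\pi_{k^{-1}g^{-1}h}(b)\in\im\pi_{k^{-1}}$), and you prove \eqref{eq: multiplication equivalence} by the paper's $\varepsilon_{g^{-1}h}$ computation run in reverse.

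The one difference is that you route the right slot through \eqref{eq: multiplication equivalence} and a mirrored \eqref{eq: relations of pi}. This detour is unnecessary: on the original formula the right slot is immediate, since $\pi_{g^{-1}h}\pi_k(b)=\pi_{g^{-1}hk}\varepsilon_{k^{-1}}(b)=\pi_{g^{-1}hk}(b)$ for $b\in\im\pi_{k^{-1}}$. Also, your intermediate bilinear map should be $\Lambda(A)\times(KG\otimes A)\to\Lambda(A)$, not the reverse order.
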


\begin{proof}
    First, we show that \( \cdot : \Lambda(A) \times \Lambda(A) \to \Lambda(A) \) is well-defined.    Consider the map \( \phi: (KG \otimes A)  \times (KG \otimes A) \to \Lambda(A) \) given  by \( \phi(g \otimes a, k \otimes c) := \lfloor g, a \pi_{g^{-1}k}(c) \rfloor \).
        Note that for every \( a \in A_{h^{-1}} \) we have
        \begin{align*}
            \phi( g \otimes \alpha_{h}(a) - gh \otimes a, k \otimes c)
            &= \lfloor g,\alpha_h(a)\pi_{g^{-1}k}(c)\rfloor-\lfloor gh,a\pi_{h^{-1}g^{-1}k}(c)\rfloor \\ 
            (\text{by }\eqref{eq: relations of pi})&= \lfloor g,\pi_h(a\pi_{h^{-1}g^{-1}k}(c))\rfloor-\lfloor gh,a\pi_{h^{-1}g^{-1}k}(c)\rfloor \\
            &=\lfloor gh,a\pi_{h^{-1}g^{-1}k}(c)\rfloor-\lfloor gh,a\pi_{h^{-1}g^{-1}k}(c)\rfloor=0.
        \end{align*}
        Hence, by Lemma~\ref{l: computation tool for Lambda}, there exists a unique linear map \( \phi_0 : \Lambda(A) \times  (KG \otimes A) \to \Lambda(
            A) \) such that  \(\phi_0( \lfloor g, a \rfloor , h \otimes b ) = \lfloor g, a \pi_{g^{-1}h}(b) \rfloor \).
        In the same fashion we see that
        \[
            \phi_0(\lfloor k, c \rfloor, g \otimes \alpha_{h}(a) - gh \otimes a) = 0,
        \]
        and therefore applying Lemma~\ref{l: computation tool for Lambda} again,  we conclude that the map \( \cdot : \Lambda(A) \times \Lambda(A) \to \Lambda(A) \) is well-defined.

    Now we prove the final assertion. For this we calculate
    \begin{align*}
        \lfloor g, a \pi_{g^{-1}h}(b) \rfloor 
        &= \lfloor g, \pi_{g^{-1}h} \pi_{h^{-1}g}(a \pi_{g^{-1}h}(b)) \rfloor  \\
        (\text{by }\eqref{eq: relations of pi})&=\lfloor g, \pi_{g^{-1}h}(\pi_{h^{-1}g}(a) b) \rfloor \\
        &= \lfloor h, \pi_{h^{-1}g}(a) b \rfloor,
    \end{align*}
    as required.
\end{proof}

As usual, we omit the dot in the notation for the  multiplication on \(\Lambda(A)\).

    \begin{lemma} \label{l: nonassociative monomials and Lambda}
    Let \(f(x_1, x_2, \ldots, x_n)\) be a non-associative monomial of degree \(n\), where each \(x_i\) appears exactly once in the order \(x_1, x_2, \ldots, x_n\).
    This means \(f\) corresponds uniquely to a parenthesization of the product \(x_1x_2\cdots x_n\).
    Then, for any elements  \( \lfloor g_1, a_1 \rfloor, \ldots, \lfloor g_n, a_n \rfloor\) in \(\Lambda(A)\), we have
    \[
        f(\lfloor g_1, a_1 \rfloor, \ldots, \lfloor g_n, a_n \rfloor) 
        = \lfloor g_i, f(\pi_{g_i^{-1}g_1}(a_1), \ldots, a_i, \ldots, \pi_{g_i^{-1}g_n}(a_n)) \rfloor,
    \]
    for all \(1 \leq i \leq n\).
\end{lemma}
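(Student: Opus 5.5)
We argue by induction on the degree $n$. For $n=1$ there is nothing to prove, since $\pi_{g_1^{-1}g_1}=\pi_1=1_A$. For $n=2$ the statement for $i=1$ is the definition of the product in Lemma~\ref{l: product}, and for $i=2$ it is \eqref{eq: multiplication equivalence}.

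Before the induction step we establish an auxiliary identity: for all $g,k\in G$ and $a\in A$,
\[
\lfloor k, a\rfloor = \lfloor g, \pi_{g^{-1}k}(a)\rfloor \quad\text{whenever } a\in \im \pi_{k^{-1}g}.
\]
This follows from the defining relations of $\Lambda(A)$: we have $\lfloor g,\alpha_h(a)\rfloor=\lfloor gh,a\rfloor$ for $a\in A_{h^{-1}}$, and we take $h=g^{-1}k$. We also record the consequence of the partial representation axioms that $\pi_{g^{-1}h}\pi_{h^{-1}k}=\varepsilon_{g^{-1}h}\pi_{g^{-1}k}$. Together with the ideal property and \eqref{eq: relations of pi}, this lets us move the ``base point'' of a class from one group element to another.

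For the induction step, write $f=f_1f_2$, where $f_1$ is a parenthesized monomial in $x_1,\dots,x_m$ and $f_2$ one in $x_{m+1},\dots,x_n$, with $1\le m<n$. Fix $i$, and assume by symmetry that $i\le m$; the case $i>m$ is handled the same way using \eqref{eq: multiplication equivalence} in place of the definition of the product. By induction,
\[
f_1(\dots)=\lfloor g_i, u\rfloor,\qquad u=f_1(\pi_{g_i^{-1}g_1}(a_1),\dots,a_i,\dots,\pi_{g_i^{-1}g_m}(a_m)),
\]
and
\[
f_2(\dots)=\lfloor g_{m+1}, v\rfloor,\qquad v=f_2(a_{m+1},\pi_{g_{m+1}^{-1}g_{m+2}}(a_{m+2}),\dots).
\]
Lemma~\ref{l: product} then gives $f(\dots)=\lfloor g_i, u\,\pi_{g_i^{-1}g_{m+1}}(v)\rfloor$. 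Since $\pi_{g_i^{-1}g_{m+1}}$ is an algebra endomorphism, it passes through the monomial $f_2$:
\[
\pi_{g_i^{-1}g_{m+1}}(v)=f_2\bigl(\pi_{g_i^{-1}g_{m+1}}(a_{m+1}),\ \pi_{g_i^{-1}g_{m+1}}\pi_{g_{m+1}^{-1}g_{m+2}}(a_{m+2}),\dots\bigr).
\]

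The main obstacle is that the composites $\pi_{g_i^{-1}g_{m+1}}\pi_{g_{m+1}^{-1}g_j}$ equal $\varepsilon_{g_i^{-1}g_{m+1}}\pi_{g_i^{-1}g_j}$ rather than $\pi_{g_i^{-1}g_j}$, so the extra idempotents have to be removed. We handle this as follows. The ideal $\im\pi_{g_i^{-1}g_{m+1}}$ is the image of $\varepsilon_{g_i^{-1}g_{m+1}}$, and $\varepsilon_{g_i^{-1}g_{m+1}}$ is an algebra endomorphism that is idempotent, so it restricts to the identity on that ideal. Hence
\[
f_2\bigl(\varepsilon(\cdot),\dots,\varepsilon(\cdot)\bigr)=\varepsilon\bigl(f_2(\dots)\bigr),
\]
and $f_2(\pi_{g_i^{-1}g_{m+1}}(a_{m+1}),\varepsilon\pi_{g_i^{-1}g_{m+2}}(a_{m+2}),\dots)$ equals $\varepsilon\, f_2(\pi_{g_i^{-1}g_{m+1}}(a_{m+1}),\pi_{g_i^{-1}g_{m+2}}(a_{m+2}),\dots)$. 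This uses that $\varepsilon$ fixes $\pi_{g_i^{-1}g_{m+1}}(a_{m+1})$, and it works because each variable occurs exactly once in $f_2$. Finally, $u$ multiplied by an element of the ideal $\im\varepsilon$ is unaffected by dropping $\varepsilon$, by the same trick used to prove \eqref{eq: relations of pi}: the factor $\varepsilon$ can be removed once the product sits inside $\lfloor g_i,\cdot\rfloor$ after rewriting. If this last step resists, the fallback is to first prove the case $i=1$ directly, and then derive general $i$ from the auxiliary identity by moving the base point from $g_1$ to $g_i$ through \eqref{eq: relations of pi}, mirroring the proof of \eqref{eq: multiplication equivalence}.
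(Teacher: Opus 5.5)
Your top-level split $f=f_1f_2$ is a different route from the paper's, and it is workable. The paper instead contracts an innermost adjacent pair $x_jx_{j+1}$ into one variable. After the induction hypothesis it then only needs the two-factor identity $\pi_{g_i^{-1}g_j}(a_j\pi_{g_j^{-1}g_{j+1}}(a_{j+1}))=\pi_{g_i^{-1}g_j}(a_j)\pi_{g_i^{-1}g_{j+1}}(a_{j+1})$, which is exactly \eqref{eq: relations of pi}, so no idempotent has to be pushed through a whole sub-monomial.

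However, the step you flag as the main obstacle is not actually closed. Write $s=g_i^{-1}g_{m+1}$, $b_j=\pi_{g_i^{-1}g_j}(a_j)$, $u=f_1(b_1,\dots,b_m)$ and $w=f_2(b_{m+1},\dots,b_n)$. You correctly reach $\pi_s(v)=\varepsilon_s(w)$, hence $f(\dots)=\lfloor g_i,u\,\varepsilon_s(w)\rfloor$, while the target is $\lfloor g_i,uw\rfloor$. Your justification for dropping $\varepsilon_s$ fails on two counts:
\begin{enumerate}
\item Nothing can be ``removed once the product sits inside $\lfloor g_i,\cdot\rfloor$''. The map $a\mapsto\lfloor g_i,a\rfloor=\Theta_{g_i}\iota(a)$ is injective, since $\tau\circ\Theta_{g_i^{-1}}$ is a left inverse. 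So you need the literal equality $u\,\varepsilon_s(w)=uw$ in $A$.
\item The trick behind \eqref{eq: relations of pi}, namely $x\,\varepsilon(y)=\varepsilon(xy)=xy$, requires the \emph{other} factor $x$ to lie in $\im\varepsilon$. But $u$ contains $a_i$ untouched and has no reason to lie in $\im\pi_s$.
\end{enumerate}
Your fallback does not avoid this. Moving the base point from $g_1$ to $g_i$ produces the idempotent $\varepsilon_{g_i^{-1}g_1}$ on all other arguments, and removing it needs exactly the same missing fact.

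The missing observation is that $u$ plays no role: $w$ itself already lies in $\im\pi_s$. Indeed $b_{m+1}=\pi_s(a_{m+1})\in\im\pi_s$, the variable $x_{m+1}$ occurs in $f_2$, and $\im\pi_s$ is a two-sided ideal. Hence every sub-product of $f_2$ containing that factor stays in $\im\pi_s$. Since $\varepsilon_s\pi_s=\pi_s$, the map $\varepsilon_s$ is the identity on $\im\pi_s$, so
\[
\pi_s(v)=\varepsilon_s\bigl(f_2(\pi_s(a_{m+1}),b_{m+2},\dots,b_n)\bigr)=f_2(b_{m+1},\dots,b_n)=w,
\]
and $f(\dots)=\lfloor g_i,uw\rfloor$ as required. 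The case $i>m$ is symmetric, using \eqref{eq: multiplication equivalence} and absorbing $\varepsilon_{g_i^{-1}g_1}$ into $\im\pi_{g_i^{-1}g_1}$ in the same way. With this sentence added your proof is complete.

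A minor point: pulling the endomorphism $\varepsilon_s$ out of $f_2$ does not need each variable to occur exactly once. What matters is only that $x_{m+1}$ occurs in $f_2$ at all.
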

\begin{proof}
    We proceed by induction on the number of variables.
    For \(n = 1\) the result is trivial, and for \(n=2\) it follows from equation~\eqref{eq: multiplication equivalence}.
     For \(n+1\), since $(x_j x_{j+1})$ appears in $f$ for some \(1 \leq j \leq n\), we can write \(f(x_1, \ldots, x_{n+1}) = f'(x_1, \ldots, x_j x_{j+1}, \ldots, x_{n+1})\), where  \(f'\) is a parenthesization monomial of degree \(n\).
    Let \(1 \leq i \leq n\),  then we consider the following cases:\\
   \textbf{Case 1:} Suppose  that \(i < j, j+1\), thus
    \begin{align*}
        f(\lfloor &g_1, a_1 \rfloor, \ldots, \lfloor g_{n+1}, a_{n+1} \rfloor) \\ 
        &= f'(\lfloor g_1, a_1 \rfloor, \ldots, \lfloor g_j, a_j \rfloor \lfloor g_{j+1}, a_{j+1} \rfloor, \ldots, \lfloor g_n, a_n \rfloor) \\
        &= f'(\lfloor g_1, a_1 \rfloor, \ldots,
            \lfloor g_j, a_{j}\pi_{g_j^{-1}g_{j+1}}(a_{j+1})\rfloor,
            \ldots, \lfloor g_n, a_n \rfloor) \\
        (\flat)&= \lfloor 
            g_i,
            f'(
            \pi_{g_i^{-1}g_1}(a_1), \ldots, a_i, \ldots,
            \pi_{g_i^{-1}g_j}(a_j \pi_{g_j^{-1}g_{j+1}}(a_{j+1})),
            \ldots,\pi_{g_i^{-1}g_n}(a_n)
            )
            \rfloor \\
        (\sharp)&= \lfloor 
            g_i,
            f'(
            \pi_{g_i^{-1}g_1}(a_1), \ldots, a_i, \ldots,
            \pi_{g_i^{-1}g_j}(a_j) \pi_{g_i^{-1}g_{j+1}}(a_{j+1}),
            \ldots,\pi_{g_i^{-1}g_n}(a_n)
            )
            \rfloor \\
        &= \lfloor 
            g_i,
            f(
            \pi_{g_i^{-1}g_1}(a_1), \ldots, a_i, \ldots,
            \pi_{g_i^{-1}g_j}(a_j), \pi_{g_i^{-1}g_{j+1}}(a_{j+1}),
            \ldots,\pi_{g_i^{-1}g_n}(a_n)
            )
            \rfloor,
    \end{align*}
    where \((\flat)\) holds by the induction hypothesis, and \((\sharp)\) follows from \eqref{eq: relations of pi}.\\
 \textbf{Case 2:} Assume that \(i > j, j+1\), then the proof is analogous to the previous case.\\
  \textbf{Case 3:} Suppose that \(i = j\), then
    \begin{align*}
        f(\lfloor &g_1, a_1 \rfloor, \ldots, \lfloor g_{n+1}, a_{n+1} \rfloor) \\ 
        &= f'(\lfloor g_1, a_1 \rfloor, \ldots, \lfloor g_j, a_j \rfloor \lfloor g_{j+1}, a_{j+1} \rfloor, \ldots, \lfloor g_n, a_n \rfloor) \\
        &= f'(\lfloor g_1, a_1 \rfloor, \ldots,
            \lfloor g_j, a_{j}\pi_{g_j^{-1}g_{j+1}}(a_{j+1})\rfloor,
            \ldots, \lfloor g_n, a_n \rfloor) \\
        (\flat)&= \lfloor 
            g_j,
            f'(
            \pi_{g_j^{-1}g_1}(a_1), a_j \pi_{g_j^{-1}g_{j+1}}(a_{j+1}),
            \ldots,\pi_{g_j^{-1}g_n}(a_n)
            )
            \rfloor \\
        &= \lfloor 
            g_j,
            f(
            \pi_{g_j^{-1}g_1}(a_1), a_j, \pi_{g_j^{-1}g_{j+1}}(a_{j+1}),
            \ldots,\pi_{g_j^{-1}g_n}(a_n)
            )
            \rfloor,
    \end{align*}
    where \((\flat)\) holds by the induction hypothesis.

    \textbf{Case 4:} Assume that \(i = j+1\). The proof follows similarly to the previous case.
\end{proof}

\begin{proposition}\label{p: nonassociative monomials and Lambda}
    Let \(f(x_{1}, \ldots, x_{n}) \neq 0\) be a non-associative monomial such that each \(x_{i}\) appears in \(f(x_{1}, \ldots, x_{n})\). 
    Then, for any \( \lfloor g_1, a_1 \rfloor, \ldots, \lfloor g_n, a_n \rfloor \in \Lambda(A) \setminus \{ 0 \}\), we have
    \begin{equation}\label{eq:nonassoc monomial and Lambda}
        f(\lfloor g_1, a_1 \rfloor, \ldots, \lfloor g_n, a_n \rfloor) 
        = \lfloor g_i, f(\pi_{g_i^{-1}g_1}(a_1), \ldots, a_i, \ldots, \pi_{g_i^{-1}g_n}(a_n)) \rfloor.
    \end{equation}
\end{proposition}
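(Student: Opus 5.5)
Proposition~\ref{p: nonassociative monomials and Lambda} generalizes Lemma~\ref{l: nonassociative monomials and Lambda}: the variables may now appear in any order and with repetitions. The plan is to reduce it to that lemma by a substitution argument.

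Let \(f(x_1,\ldots,x_n)\) be a monomial of degree \(m\ge n\) in which every \(x_i\) occurs. Read the occurrences of variables in \(f\) from left to right. This gives a map \(\sigma:\{1,\ldots,m\}\to\{1,\ldots,n\}\), surjective because every variable appears. Let \(f_0(y_1,\ldots,y_m)\) be the multilinear monomial with the same parenthesization as \(f\), each \(y_k\) appearing once in the order \(y_1,\ldots,y_m\). Then
\[
f(x_1,\ldots,x_n)=f_0(x_{\sigma(1)},\ldots,x_{\sigma(m)}),
\]
and this holds in any algebra, in particular in \(\Lambda(A)\) and in \(A\).

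Fix \(i\) and choose an index \(k\) with \(\sigma(k)=i\). Apply Lemma~\ref{l: nonassociative monomials and Lambda} to \(f_0\) with the elements \(\lfloor g_{\sigma(1)},a_{\sigma(1)}\rfloor,\ldots,\lfloor g_{\sigma(m)},a_{\sigma(m)}\rfloor\), distinguishing position \(k\). This gives
\[
f(\lfloor g_1,a_1\rfloor,\ldots,\lfloor g_n,a_n\rfloor)=\lfloor g_i, f_0(b_1,\ldots,b_m)\rfloor,
\qquad b_l=\pi_{g_i^{-1}g_{\sigma(l)}}(a_{\sigma(l)}),
\]
with \(b_k=a_i\).

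The main obstacle is that every other occurrence \(l\neq k\) with \(\sigma(l)=i\) contributes \(\pi_{1}(a_i)\) instead of \(a_i\). Since \(\pi_1=1\) by axiom (i) of Definition~\ref{d: partial representation}, we get \(\pi_1(a_i)=a_i\). Hence \(b_l=\pi_{g_i^{-1}g_{\sigma(l)}}(a_{\sigma(l)})\) for every \(l\), where for \(\sigma(l)=i\) this is \(a_i\). The entries \(b_l\) therefore depend only on \(\sigma(l)\), so
\[
f_0(b_1,\ldots,b_m)=f\bigl(\pi_{g_i^{-1}g_1}(a_1),\ldots,a_i,\ldots,\pi_{g_i^{-1}g_n}(a_n)\bigr),
\]
which gives \eqref{eq:nonassoc monomial and Lambda}. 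Scalar coefficients of \(f\) pass through \(\lfloor g_i,-\rfloor\) by linearity. The nonzero hypotheses on \(f\) and on the elements are not needed for the computation.
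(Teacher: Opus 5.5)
Your proof is correct and follows the paper's argument: write \(f\) as a multilinear monomial \(f'\) evaluated at \(x_{\sigma_1},\ldots,x_{\sigma_m}\), apply Lemma~\ref{l: nonassociative monomials and Lambda} at a position occupied by \(x_i\), and use \(\pi_1=1\) to replace the repeated \(\pi_{g_i^{-1}g_i}(a_i)\) by \(a_i\). Choosing an index \(k\) with \(\sigma(k)=i\) explicitly makes precise a step the paper leaves implicit.
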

\begin{proof}
    First note that there exists a non-associative monomial \(f'(y_1, \ldots, y_m)\) as in  the statement of Lemma~\ref{l: nonassociative monomials and Lambda} and \(\sigma \in \{ 1, \ldots, n \}^{m}\),  $m\geq n,$ such that \(f'(x_{\sigma_1}, \ldots, x_{\sigma_m}) = f(x_1, \ldots, x_n)\).
    Let \(\lfloor g_1, a_1 \rfloor, \ldots, \lfloor g_n, a_n \rfloor \in \Lambda(A)\),
    and \( 1 \leq i \leq n\), then
    \begin{align*}
        f(\lfloor g_1, a_1 \rfloor, \ldots, \lfloor g_n, a_n \rfloor)
        &= f'(\lfloor g_{\sigma_1}, a_{\sigma_{1}}\rfloor, \ldots, \lfloor g_{\sigma_{m}}, a_{\sigma_{m}} \rfloor ) \\
        (\flat) &= \lfloor g_i, f'(\pi_{g_i^{-1}g_{\sigma_1}}(a_{\sigma_1}), \ldots, \pi_{g_i^{-1}g_{\sigma_m}}(a_{\sigma_m})) \rfloor \\
        &= \lfloor g_i, f(\pi_{g_i^{-1}g_{1}}(a_1), \ldots, \pi_{g_i^{-1}g_{i}}(a_i), \ldots,\pi_{g_i^{-1}g_m}(a_m)) \rfloor \\
        &= \lfloor g_i, f(\pi_{g_i^{-1}g_{1}}(a_1), \ldots, a_i, \ldots,\pi_{g_i^{-1}g_m}(a_m)) \rfloor,
    \end{align*}
    where \((\flat)\) holds by Lemma~\ref{l: nonassociative monomials and Lambda}.
\end{proof}

\begin{remark}\label{rem:nonassoc monomial and Lambda}  For the proof of the theorem below observe that  it easily follows from Proposition~\ref{p: nonassociative monomials and Lambda} that if \(f(x_{1}, \ldots, x_{n}) \neq 0\) is a non-associative monomial such that   \(x_{i}\) appears in \(f(x_{1}, \ldots, x_{n})\) for some $i$,  
    then, for any \( \lfloor g_1, a_1 \rfloor, \ldots, \lfloor g_n, a_n \rfloor \in \Lambda(A) \setminus \{ 0 \}\), 
    the equality \eqref{eq:nonassoc monomial and Lambda} holds for $i.$ For it is enough to consider   $f$  as a monomial  only on those variables which indeed appear in $f$ and apply Proposition~\ref{p: nonassociative monomials and Lambda}.
\end{remark}

\begin{theorem}\label{t: globalization on varieties}
    Let $A$ be an algebra in a variety $\mathcal{V}(I)$ and $\pi: G \to \operatorname{End}_0(A)$ be   an ideal partial representation of $G$ on \(A\).
    Suppose that for any $f(x_1, \ldots, x_n) \in I$, there exists \(i\) such that \(x_i\) appears in each monomial of $f$. Then $(\Theta, \iota)$ is a globalization for the partial action $\alpha$  on the algebra $A$ induced by $\pi$ such that \(\Lambda(A) \in \mathcal{V}(I)\).
    Moreover, the product defined in~\eqref{eq: multiplication equivalence} is the unique one such that $\tau: \Lambda(A) \to A$, defined by $\tau(\lfloor g, a \rfloor) = \pi_g(a)$ (as in~\eqref{eq: def tau}), is an algebra homomorphism, $\iota(A)$ is an ideal of $\Lambda(A)$, and $(\Theta, \iota)$ is a globalization for $\alpha$.
\end{theorem}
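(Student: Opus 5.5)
The plan is to verify three things in turn. First, that the product of Lemma~\ref{l: product} is compatible with $\Theta$, $\tau$ and $\iota$. Second, that $\Lambda(A)$ satisfies every identity of $I$. Third, that the globalization axioms hold, with uniqueness at the end.

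\textbf{Step 1: compatibility.}
\begin{itemize}
\item Each $\Theta_k$ is an algebra automorphism: $\Theta_k(\lfloor g,a\rfloor\lfloor h,b\rfloor)=\lfloor kg,a\pi_{g^{-1}h}(b)\rfloor=\lfloor kg,a\rfloor\lfloor kh,b\rfloor$.
\item $\tau$ is a homomorphism: $\tau(\lfloor g,a\rfloor\lfloor h,b\rfloor)=\pi_g(a\pi_{g^{-1}h}(b))=\pi_g(a)\pi_h(b)$ by \eqref{eq: relations of pi}.
\item $\iota$ is an injective homomorphism, since $\tau\circ\iota=1_A$.
\item $\iota(A)$ is a two-sided ideal, because $\lfloor 1,a\rfloor\lfloor h,b\rfloor=\lfloor 1,a\pi_h(b)\rfloor$, and symmetrically by \eqref{eq: multiplication equivalence}.
\end{itemize}
Since $\Lambda(A)$ is spanned by the $\Theta_g(\iota(A))$, we get $\Lambda(A)=\sum_g\Theta_g(\iota(A))$. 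By Theorem~\ref{t: globalization of partial representations}, $\iota$ identifies the partial action on the module $A$ with the restriction of $\Theta$ to $\iota(A)$. Because $\iota$ and all $\Theta_g$ are algebra maps, this identification is one of algebras. So $(\Theta,\iota)$ satisfies (i)–(iii) of Definition~\ref{d: globalization of partial group actions}, provided $\Lambda(A)\in\mathcal V(I)$.

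\textbf{Step 2: $\Lambda(A)\in\mathcal V(I)$.} This is where the hypothesis on $I$ is used, and I expect it to be the main obstacle. Fix $f\in I$ and a variable $x_i$ occurring in every monomial of $f$. Introduce the homomorphisms $\rho_h:=\tau\circ\Theta_{h^{-1}}\colon\Lambda(A)\to A$, given by $\lfloor g,a\rfloor\mapsto\pi_{h^{-1}g}(a)$. These let Remark~\ref{rem:nonassoc monomial and Lambda} be applied when only one argument is a generator. Concretely, if a monomial $m$ contains $x_i$ and is evaluated at $w_1,\dots,w_n$ with $w_i=\lfloor h,c\rfloor$ and the other $w_j$ arbitrary, then I expect
\[
m(w_1,\dots,w_n)=\lfloor h, m(\rho_h(w_1),\dots,c,\dots,\rho_h(w_n))\rfloor .
\]
This is proved by the same induction as Lemma~\ref{l: nonassociative monomials and Lambda}, using $\lfloor h,c\rfloor w=\lfloor h,c\rho_h(w)\rfloor$ and multiplicativity of $\rho_h$.

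The difficulty is that $x_i$ may occur several times while $w_i=\sum_k\lfloor h_k,c_k\rfloor$ is a sum, so $f$ is not linear in $x_i$. My first attempt is to linearize. Expanding every occurrence of $x_i$, each resulting term contains some $\lfloor h_k,c_k\rfloor$. Collect terms according to the first such occurrence $k$, using $\Theta$-equivariance to move everything into one coordinate $h_k$, and compare with $\rho_{h_k}$ applied to the corresponding identity in $A$.

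If that bookkeeping becomes unwieldy, the fallback is to prove that $w\mapsto(\rho_h(w))_{h\in G}$ is injective. That map is an algebra homomorphism $\Lambda(A)\to A^G$, so injectivity makes $\Lambda(A)$ a subalgebra of a direct power of $A$, hence in $\mathcal V(I)$. For injectivity, if $\rho_h(w)=0$ for all $h$, I would use $\lfloor h,c\rfloor w=\lfloor h,c\rho_h(w)\rfloor=0$ together with the description of $\ker$ from Theorem~\ref{t: globalization of partial representations}.

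\textbf{Step 3: uniqueness.} Let $*$ be any product for which $\tau$ is a homomorphism, $\iota(A)$ is an ideal, and $(\Theta,\iota)$ is a globalization, so in particular each $\Theta_g$ preserves $*$. Then
\[
\lfloor g,a\rfloor*\lfloor h,b\rfloor=\Theta_g\big(\iota(a)*\lfloor g^{-1}h,b\rfloor\big).
\]
The inner product lies in $\iota(A)$, so it equals $\iota(y)$ for some $y\in A$. Applying $\tau$ and using $\tau\iota=1_A$ gives $y=a\,\pi_{g^{-1}h}(b)$. Hence $\lfloor g,a\rfloor*\lfloor h,b\rfloor=\lfloor g,a\pi_{g^{-1}h}(b)\rfloor$, which is the product of Lemma~\ref{l: product}.
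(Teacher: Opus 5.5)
Your Steps 1 and 3 are correct, and Step 3 is exactly the paper's uniqueness argument. Your formula $m(w_1,\dots,w_n)=\lfloor h,m(\rho_h(w_1),\dots,c,\dots,\rho_h(w_n))\rfloor$ is also correct. It follows by induction from $\lfloor h,u\rfloor v=\lfloor h,u\rho_h(v)\rfloor$, its mirror image, and multiplicativity of $\rho_h$, and it already settles every $f\in I$ that is linear in $x_i$. Your diagnosis is accurate: the paper's proof only evaluates $f$ at tuples of generators $\lfloor g_j,a_j\rfloor$ via Proposition~\ref{p: nonassociative monomials and Lambda}. That settles multilinear $f$, but not, by itself, identities of higher degree in some variable.

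The gap is that you never close the case where $x_i$ occurs with degree $\geq 2$. Neither of your two routes does it as described.
\begin{itemize}
\item \textbf{First attempt.} You group expanded terms by the index $k$ placed in the \emph{first} occurrence of $x_i$. The group of $k$ is then $\lfloor h_k,\sum_t t(\dots)\rfloor$, where each monomial has $c_k$ in its first $x_i$-slot and $\rho_{h_k}(w_i)$ in the others. This is a partial linearization, not a value of $f$, so there is no identity of $A$ to compare it with. For instance, take $f=x_1^2$ and $w_1=\lfloor h_1,c_1\rfloor+\lfloor h_2,c_2\rfloor$: the group of $k=1$ is $\lfloor h_1,c_1\pi_{h_1^{-1}h_2}(c_2)\rfloor$, and only the sum of both groups vanishes.
\item \textbf{Fallback.} It rests on injectivity of $w\mapsto(\rho_h(w))_{h\in G}$, but your indicated argument does not give it. Knowing $\lfloor h,c\rfloor w=0$ for all $h,c$ carries no information when, say, $A$ has zero multiplication. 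And Theorem~\ref{t: globalization of partial representations} provides no description of kernels.
\end{itemize}

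The missing idea is to group by the \emph{least} index, i.e.\ to telescope.
\begin{itemize}
\item Write $w_i=\sum_{k=1}^r\lfloor h_k,c_k\rfloor$, $s_k=\sum_{j\geq k}\lfloor h_j,c_j\rfloor$ and $s_{r+1}=0$. Let $D_k$ be the value of $f$ with $s_k$ in the $i$-th slot minus its value with $s_{k+1}$ there, the other slots being $w_j$.
\item Since $x_i$ occurs in every monomial, $f$ vanishes when its $i$-th argument is $0$, so $f(w_1,\dots,w_n)=\sum_k D_k$.
\item Expand $s_k=\lfloor h_k,c_k\rfloor+s_{k+1}$ at every occurrence of $x_i$. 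The all-$s_{k+1}$ terms cancel, and each remaining term has a factor $\lfloor h_k,c_k\rfloor$.
\item Hence $D_k$ lies in the ideal $\Theta_{h_k}(\iota(A))=\{\lfloor h_k,c\rfloor : c\in A\}$. On this ideal $\rho_{h_k}$ inverts $c\mapsto\lfloor h_k,c\rfloor$, so $D_k=\lfloor h_k,\rho_{h_k}(D_k)\rfloor$.
\item Since $\rho_{h_k}$ is a homomorphism, $\rho_{h_k}(D_k)$ is a difference of two values of $f$ on $A$, hence $0$.
\end{itemize}

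Alternatively, your fallback can be rescued, since $(\rho_h)_{h\in G}$ is in fact injective. Proving it needs the commuting idempotents $\varepsilon_g$ and the rule $\pi_h\varepsilon_t=\varepsilon_{ht}\pi_h$:
\begin{itemize}
\item split each $a_k$ in $w=\sum_k\lfloor g_k,a_k\rfloor$ along the products of the $\varepsilon_{g_k^{-1}g_j}$ and $1-\varepsilon_{g_k^{-1}g_j}$;
\item move each piece to a single coordinate using the defining relations of $\Lambda(A)$;
\item recover it by applying the corresponding product of idempotents to $\rho_{g_j}(w)$.
\end{itemize}
That route even shows $\Lambda(A)\in\mathcal V(I)$ without the hypothesis on $I$.
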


\begin{proof}
   By Theorem~\ref{t: globalization of partial representations}, $(\Theta, \iota)$ is the universal globalization action of the partial action on the $K$-module $A$ induced by $\pi$. We already know that $\Lambda(A)$ is a non-associative algebra by Lemma~\ref{l: product}. Thus, for the first statement  it is sufficient to show that \(\Lambda(A) \in \mathcal{V}(I)\) and that the map \(\iota: A \to \Lambda(A)\) is an algebra morphism such that \(\iota(A)\) is a bilateral ideal of \(\Lambda(A)\).  We now show that $\Lambda(A)$, with the product defined in~\eqref{eq: multiplication equivalence}, belongs to $\mathcal{V}(I)$. Let $f(x_1, \ldots, x_n) \in I$ and suppose that there exists \(1 \leq i \leq n\)  such that $x_i$ appears in every monomial of $f$.
    Let \(\operatorname{M}_f\) be the set of all monomials,  which are summands of $f$,
    and let $\lfloor g_1, a_1 \rfloor, \ldots, \lfloor g_n, a_n \rfloor \in \Lambda(A)$, then  by Proposition~\ref{p: nonassociative monomials and Lambda} (see also Remark~\ref{rem:nonassoc monomial and Lambda}) we have that
    \begin{align*}
        f(\lfloor g_1, a_1 \rfloor, \ldots, \lfloor g_n, a_n \rfloor)
        &= \sum_{t \in \operatorname{M}_f} t( \lfloor g_1, a_1 \rfloor, \ldots, \lfloor g_n, a_n \rfloor) \\
        &= \sum_{t \in \operatorname{M}_f} \lfloor g_i, t(\pi_{g_i^{-1}g_1}(a_1), \ldots, a_i, \ldots, \pi_{g_i^{-1}g_n}(a_n)) \rfloor \\
        &= \lfloor g_i, f(\pi_{g_i^{-1}g_1}(a_1), \ldots, a_i, \ldots, \pi_{g_i^{-1}g_n}(a_n)) \rfloor = 0,
    \end{align*}
which implies that $\Lambda(A) \in \mathcal{V}(I)$. 
    Now, note that
    \[
        \iota(a)\iota(b) = \lfloor 1, a \rfloor \lfloor 1, b \rfloor = \lfloor 1, a \pi_{1}(b) \rfloor = \lfloor 1, ab \rfloor = \iota(ab),
    \]
    hence $\iota$ is an algebra homomorphism, and $\iota(A)$ is a bilateral ideal of $\Lambda(A)$ by equation \eqref{eq: multiplication equivalence}.

    Next we prove the assertion about $\tau: \Lambda(A) \to A$. Let $\lfloor g, a \rfloor, \lfloor h, b \rfloor \in \Lambda(A)$. Then,
    \[
       \tau(\lfloor g, a \rfloor \lfloor h, b \rfloor) = \tau(\lfloor g, a \pi_{g^{-1}h}(b) \rfloor)= \pi_g(a \pi_{g^{-1}h}(b))\overset{\scriptsize\eqref{eq: relations of pi}}{=}\pi_g(a) \pi_h(b) = \tau(\lfloor g, a \rfloor) \tau(\lfloor h, b \rfloor),
    \]
    so $\tau$ is an algebra homomorphism. 

    Finally, assume that $*$ is a product in $\Lambda(A)$ such that $\tau$ is an algebra homomorphism, $\iota(A) \unlhd \Lambda(A)$, and $(\Theta, \iota)$ is a globalization for $\alpha$. For any $a \in A$ and $\lfloor h, b \rfloor \in \Lambda(A)$, we have $\lfloor 1, a \rfloor * \lfloor h, b \rfloor = \lfloor 1, c \rfloor$ for some $c \in A$. Thus,
    $$ c = \tau(\lfloor 1, c \rfloor) = \tau(\lfloor 1, a \rfloor * \lfloor h, b \rfloor) = \tau(\lfloor 1, a \rfloor) \tau(\lfloor h, b \rfloor) = a \pi_h(b), $$
    which gives
    $$ \lfloor 1, a \rfloor * \lfloor h, b \rfloor = \lfloor 1, c \rfloor = \lfloor 1, a \pi_h(b) \rfloor = \lfloor 1, a \rfloor \lfloor h, b \rfloor. $$ Hence, for any $\lfloor g, a \rfloor, \lfloor h, b \rfloor \in \Lambda(A)$, we have
    \begin{multline*}
        \lfloor g, a \rfloor * \lfloor h, b \rfloor = \Theta_g(\lfloor 1, a \rfloor) * \Theta_g(\lfloor g^{-1}h, b \rfloor) = \Theta_g(\lfloor 1, a \rfloor * \lfloor g^{-1}h, b \rfloor) = \\
        = \Theta_g(\lfloor 1, a \pi_{g^{-1}h}(b) \rfloor) = \lfloor g, a \pi_{g^{-1}h}(b) \rfloor = \lfloor g, a \rfloor \lfloor h, b \rfloor,
    \end{multline*}
    as required.
\end{proof}

\begin{remark}
    There are many interesting examples of varieties $\mathcal{V}(I)$ in which $I$ satisfies the hypotheses of Theorem~\ref{t: globalization on varieties}.  Clearly, $I$ could be any set of homogeneous non-associative polynomials. For instance, this is the case of the varieties of associative algebras, Lie algebras, Jordan algebras, noncommutative Jordan algebras, alternative algebras, Malcev algebras,  Leibniz algebras and  power associative algebras,  among others.  As we mentioned already, any variety of non-associative algebras over an infinite field is homogeneous and, consequently, it is defined by a set $I,$ consisting of homogeneous non-associative polynomials, so that it fits Theorem~\ref{t: globalization on varieties}.
\end{remark}

\begin{remark}\label{r: about universal condition}
    Suppose that our algebra \(A\) is idempotent.
    Moreover, suppose that another algebra \(B \in \mathcal{V}(I)\) contains \(A\) as an ideal, and we have a global action \(\Theta': G \curvearrowright B\) such that \(\alpha\) is the restriction of \(\Theta'\) to \(A\). 
    Then,
    \begin{equation}\label{eq: ideal universal condition}
        \Theta'_g(a)b = \Theta'_{g}(a)\varepsilon_g(b), \text{ for all }a,b \in A.
    \end{equation}
    Indeed, it is enough to suppose that \(b= b_1 b_2\) for some \(b_1,b_2 \in A\), then
    \begin{align*}
        \Theta'_g(a)b
        &= \Theta'_g(a)b_1 b_2
        = \varepsilon_{g}(\Theta'_{g}(a) b_1 b_2)
        = \varepsilon_{g}(\Theta'_{g}(a)b_1 \varepsilon_{g}(b_2)) \\
        &= \Theta'_{g}(a) \varepsilon_{g}(b_1 \varepsilon_{g}(b_{2}))
        = \Theta'_{g}(a) \varepsilon_{g}(b_1b_{2})
        = \Theta'_{g}(a) \varepsilon_{g}(b).
    \end{align*}
\end{remark}

Note that condition \eqref{eq: ideal universal condition} holds beyond the idempotent algebras, in particular \(\Theta: G \curvearrowright \Lambda(A)\) satisfies that equation.
Furthermore, we can show that the globalization obtained in Theorem~\ref{t: globalization on varieties} is universal among all the globalizations satisfying \eqref{eq: ideal universal condition}.

\begin{lemma} \label{l: basic computation for globalizations}
    Let \(A\) be an algebra in \(\mathcal{V}(I)\), \(\pi\)  an ideal  partial representation  of \(G\) on \(A\), and let \(\alpha: G \curvearrowright A\) be the partial action induced by \(\pi\).  
    Then, for any algebra \(B\) in \(\mathcal{V}(I)\), and every  global action \(\Theta': G \curvearrowright B\)  on \(B\) such that \(A\) is an ideal of \(B\), \(\alpha\) is the restriction of \(\Theta'\) to \(A\),
    and \eqref{eq: ideal universal condition} holds.
    Then,
    \[
        \Theta'_g(a)b = \Theta'_g(a \pi_{g^{-1}}(b)) = \alpha_g(a \pi_{g^{-1}}(b)),
    \]
    for all \(a,b \in A\).
\end{lemma}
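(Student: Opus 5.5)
We need to show $\Theta'_g(a)b = \Theta'_g(a\pi_{g^{-1}}(b)) = \alpha_g(a\pi_{g^{-1}}(b))$. The plan is to begin with \eqref{eq: ideal universal condition}, rewrite $\varepsilon_g(b)$ so that it sits inside $A_g$ in the image of $\Theta'_g$, and then use that $\Theta'_g$ is multiplicative and extends $\alpha_g$.

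The steps, in order:

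\begin{enumerate}[(1)]
\item Since $A$ is an ideal of $B$ and $\Theta'_g$ is an automorphism of $B$, the element $\Theta'_g(a)b$ is a product in $B$. By \eqref{eq: ideal universal condition} we have
\[
\Theta'_g(a)b=\Theta'_g(a)\,\varepsilon_g(b).
\]
\item Write $\varepsilon_g(b)=\pi_g(\pi_{g^{-1}}(b))$. Here $\pi_{g^{-1}}(b)\in \im\pi_{g^{-1}}=A_{g^{-1}}$. Because $\alpha$ is the restriction of $\Theta'$ to $A$, the map $\alpha_g=\pi_g|_{A_{g^{-1}}}$ coincides with $\Theta'_g$ on $A_{g^{-1}}$. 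Hence
\[
\varepsilon_g(b)=\Theta'_g(\pi_{g^{-1}}(b)).
\]
\item Since $\Theta'_g$ is an algebra automorphism of $B$, we get
\[
\Theta'_g(a)b=\Theta'_g(a)\,\Theta'_g(\pi_{g^{-1}}(b))=\Theta'_g\bigl(a\pi_{g^{-1}}(b)\bigr).
\]
This is the first equality.
\item For the second equality, note that $a\pi_{g^{-1}}(b)$ lies in the ideal $A_{g^{-1}}$ of $A$, because $\pi$ is an ideal partial representation. On $A_{g^{-1}}$ the restriction property gives $\Theta'_g=\alpha_g$, so
\[
\Theta'_g\bigl(a\pi_{g^{-1}}(b)\bigr)=\alpha_g\bigl(a\pi_{g^{-1}}(b)\bigr).
\]
\end{enumerate}

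The only delicate point is step (2): one must be careful about what "restriction" means. It means $A_g=A\cap\Theta'_g(A)$ and $\alpha_g=\Theta'_g|_{A_{g^{-1}}}$, possibly up to the identification built into the hypothesis. So the argument needs the evaluation of $\Theta'_g$ only on elements of $A_{g^{-1}}$, where this identification is given. Everything else is a direct substitution.
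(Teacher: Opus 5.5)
Your proof is correct and follows the paper's argument: the same chain $\Theta'_g(a)b=\Theta'_g(a)\varepsilon_g(b)=\Theta'_g(a)\Theta'_g(\pi_{g^{-1}}(b))=\Theta'_g(a\pi_{g^{-1}}(b))=\alpha_g(a\pi_{g^{-1}}(b))$. The only cosmetic difference is in the last equality. You justify it by noting that $a\pi_{g^{-1}}(b)\in A_{g^{-1}}$ because $\im\pi_{g^{-1}}$ is an ideal of $A$, whereas the paper prefaces the computation with the observation $\Theta'_g(a)b\in\Theta'_g(A)\cap A=A_g$; your justification is, if anything, the more direct one.
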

\begin{proof}
    First note that \(\Theta'_g(a)b \in \Theta'_g(A)\cap A= A_g\).
    Hence,
    \begin{align*}
        \Theta'_{g}(a)b 
        &= \Theta'_{g}(a) \varepsilon_{g} (b)
        = \Theta'_{g}(a) \alpha_{g} \pi_{g^{-1}} (b) \\
        &= \Theta'_{g}(a) \Theta'_{g} \pi_{g^{-1}} (b)
        = \Theta'_{g}(a \pi_{g^{-1}} (b))
        = \alpha_{g}(a \pi_{g^{-1}} (b)).
    \end{align*}
\end{proof}

\begin{proposition}
    Let  \(A\in \mathcal{V}(I)\), \(\alpha: G \curvearrowright A \) be the partial action induced by  an ideal partial representation \(\pi: G\to \operatorname{End}_0(A)\), and \(\Theta: G \curvearrowright \Lambda(A) \)  the globalization obtained in Theorem~\ref{t: globalization on varieties}.
    If \(\Theta': G \curvearrowright B \), \(B\in \mathcal{V}(I)\), is a globalization of \(\alpha\) such that \eqref{eq: ideal universal condition} holds. Then, there exists a surjective  $G$-equivariant algebra homomorphism \(\varphi: \Lambda(A) \to B \) such that \(\varphi \circ \iota = \iota'\), where \(\iota: A \to \Lambda(A) \) and \(\iota': A \to B \) are the embeddings of \(A\) in \(\Lambda(A) \) and \(B\), respectively.
\end{proposition}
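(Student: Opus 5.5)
We will obtain \(\varphi\) from the module-level universal property and then check multiplicativity. Since \(\Theta'\) is a globalization of \(\alpha\), the embedding \(\iota'\colon A\to B\) is a \(G\)-equivariant linear map into a global action. By Theorem~\ref{t: universal global action} and the remark after it, there is a unique \(G\)-equivariant linear map \(\varphi\colon \Lambda(A)\to B\) with \(\varphi\circ\iota=\iota'\). Explicitly,
\[
\varphi(\lfloor g,a\rfloor)=\Theta'_g(\iota'(a)),
\]
which we write as \(\Theta'_g(a)\), identifying \(A\) with \(\iota'(A)\). Surjectivity is immediate from condition (iii) of Definition~\ref{d: globalization of partial group actions}: \(B=\sum_g\Theta'_g(A)\), and every summand \(\Theta'_g(a)\) equals \(\varphi(\lfloor g,a\rfloor)\).

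The remaining task, and the only substantive step, is to show that \(\varphi\) is an algebra homomorphism. By bilinearity it suffices to check generators. Using the product of Lemma~\ref{l: product}, we need
\[
\varphi(\lfloor g,a\rfloor\lfloor h,b\rfloor)=\Theta'_g\bigl(a\,\pi_{g^{-1}h}(b)\bigr)\quad\text{to equal}\quad \Theta'_g(a)\,\Theta'_h(b).
\]
Since \(\Theta'_g\) is an algebra automorphism of \(B\), we have
\[
\Theta'_g(a)\Theta'_h(b)=\Theta'_g\bigl(a\,\Theta'_{g^{-1}h}(b)\bigr).
\]
So it is enough to prove \(a\,\Theta'_k(b)=a\,\pi_k(b)\) for all \(a,b\in A\) and \(k\in G\). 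This is Lemma~\ref{l: basic computation for globalizations} with the factors in the other order. That lemma gives \(\Theta'_k(b)a=\alpha_k(b\,\pi_{k^{-1}}(a))\), which handles products with \(\Theta'_k(b)\) on the left. We will then use the symmetric form of \eqref{eq: multiplication equivalence}, namely
\[
\lfloor g,a\rfloor\lfloor h,b\rfloor=\lfloor h,\pi_{h^{-1}g}(a)b\rfloor,
\]
so that \(\varphi\) of the product becomes \(\Theta'_h(\pi_{h^{-1}g}(a)\,b)\), while
\[
\Theta'_g(a)\Theta'_h(b)=\Theta'_h\bigl(\Theta'_{h^{-1}g}(a)\,b\bigr).
\]
It therefore suffices to show \(\Theta'_k(a)\,b=\pi_k(a)\,b\) for \(a,b\in A\).

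This last identity follows from Lemma~\ref{l: basic computation for globalizations} together with \eqref{eq: relations of pi}. Indeed,
\[
\Theta'_k(a)b=\alpha_k\bigl(a\,\pi_{k^{-1}}(b)\bigr)=\pi_k\bigl(a\,\pi_{k^{-1}}(b)\bigr)=\pi_k(a)\,\pi_1(b)=\pi_k(a)b.
\]
The step where \(\alpha_k\) becomes \(\pi_k\) is valid because \(a\,\pi_{k^{-1}}(b)\) lies in the ideal \(A_{k^{-1}}=\operatorname{im}\pi_{k^{-1}}\), where \(\alpha_k\) is the restriction of \(\pi_k\). The step after it is \eqref{eq: relations of pi} with \(g=k\) and \(h=1\).

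The main obstacle is exactly this left/right bookkeeping. Hypothesis \eqref{eq: ideal universal condition} only controls products of the form \(\Theta'_g(a)b\), so we must present both sides with the \(\Theta'\)-translate on the left, which is why we use the second form \eqref{eq: multiplication equivalence} of the product. With that settled, \(\varphi\) is a surjective \(G\)-equivariant algebra homomorphism satisfying \(\varphi\circ\iota=\iota'\).
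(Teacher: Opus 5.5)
Your argument is correct and is essentially the paper's: $\varphi$ comes from the module-level universal property of $\Lambda(A)$, and multiplicativity is reduced, via a translation by $h$, to Lemma~\ref{l: basic computation for globalizations}. The intermediate reduction to $a\,\Theta'_k(b)=a\,\pi_k(b)$, which you never prove, is not needed and can be deleted. The only difference from the paper is that it first checks $\varphi(\lfloor g,a\rfloor\lfloor 1,b\rfloor)=\varphi(\lfloor g,a\rfloor)\varphi(\lfloor 1,b\rfloor)$ and then translates inside $\Lambda(A)$ using $\Theta_h$ and the $G$-equivariance of $\varphi$, whereas you translate inside $B$ using $\Theta'_h$ together with the second form \eqref{eq: multiplication equivalence} of the product.
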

\begin{proof}
    Note that  by Theorem~\ref{t: globalization of partial representations}, \(\Theta\) is the universal globalization of \(\alpha\) seen as partial action on the module structure of \(A\).
     Therefore, there exists   a $G$-equivariant surjective linear map \(\varphi: \Lambda(A) \to B \) such that \(\varphi \circ \iota = \iota'\).  Notice that
    \[
        \varphi(\lfloor g, a \rfloor) = \varphi(\Theta_g(\iota(a))) = \Theta'_g(\varphi(\iota(a)))  = \Theta'_g(\iota'(a))
    \] for all $g\in G$ and $a\in A.$
    
      In order to verify that \(\varphi\) is an algebra homomorphism
    observe that for any \(g \in G\), \(a, b \in A\) we have
    \begin{align*}
        \varphi(\lfloor g, a \rfloor \lfloor 1, b \rfloor)
        &= \varphi( \lfloor g, a \pi_{g^{-1}}(b) \rfloor ) 
        =  \Theta_g' \iota'(a \pi_{g^{-1}}(b))
        =  \Theta_g' (\iota'(a) \iota'(\pi_{g^{-1}}(b) ) ) \\
        (\text{by Lemma~\ref{l: basic computation for globalizations}})
        &= \Theta_g' (\iota'(a)) \iota'(b) 
        = \varphi(\lfloor g, a \rfloor)  \varphi(\lfloor 1, b \rfloor). 
    \end{align*}  Then
    \begin{align*} 
     \varphi(\lfloor g, a \rfloor \lfloor h, b \rfloor)
        &= \varphi(\lfloor g, a \rfloor \Theta_h( \lfloor 1, b \rfloor))
         = \varphi(\Theta_h(\lfloor h^{-1}g, a \rfloor  \lfloor 1, b \rfloor))
         = \Theta '_h(\varphi(\lfloor h^{-1}g, a \rfloor  \lfloor 1, b \rfloor))\\
         &= \Theta '_h(\varphi(\lfloor h^{-1}g, a \rfloor ) \varphi(\lfloor 1, b \rfloor))
         = \varphi(\lfloor g, a \rfloor) \varphi( \lfloor h, b \rfloor),
    \end{align*} for any  $g,h\in G, a,b\in A.$
    
\end{proof}

\subsection{Globalization of generalized partial actions}

One of the main motivations for this section is to identify a suitable framework in which generalized partial actions can be globalized. The example below illustrates a situation where generalized partial actions arise naturally.
\begin{example}\label{e: partial action on U(L)}
    Let $\pi: G \to \operatorname{End}_0(L)$ be  an ideal partial representation of $G$ on a Lie algebra $L$, and let $\alpha$ denote the partial action induced by $\pi$.
    By the universal property of the universal enveloping algebra $U(L)$ of $L$, $\pi$ induces a partial representation (but not  an ideal one) $U(\pi) : G \to \operatorname{End}_0(U(L))$ of $G$ on $U(L)$ and \(U(\pi)\) gives rise to a generalized partial action $U(\alpha)$ on $U(L)$. 
     Observe that each \(U(\pi)_g\), being an endomorphism of the associative algebra $U(L)$, is also an endomorphism of the corresponding Lie algebra  $U(L)^-$, so that $U(\alpha)$  is also a generalized partial action of $G$ on the Lie algebra  $U(L)^-,$ and the canonical map $L\to U(L)$ is Lie $G$-equivariant.  In particular, it is a 
    $G$-equivariant linear map.
    
    On the other hand, \(U(\Lambda(L))\) possesses a natural global \(G\)-action \(U(\Theta)\) induced by the global action of \(G\) on \(\Lambda(L)\)
     and, as above,  the canonical map 
    \(\Lambda(L) \to U(\Lambda(L)) \) is  \(G\)-equivariant. Since  \( \iota : L \to  \Lambda (L)   \)  is a \(G\)-equivariant  Lie algebra embedding, it induces a \(G\)-equivariant map  \(U(\iota ): U(L) \to  U (\Lambda(L))\).
    Therefore, we have the following commutative diagram, in which all maps are  \(G\)-equivariant:
    \begin{center}
    \begin{tikzcd}
        L & {U(L)} \\
        {\Lambda(L)} & U(\Lambda(L))
        \arrow[from=1-1, to=1-2]
        \arrow["\iota"', from=1-1, to=2-1]
        \arrow["{U(\iota)}", from=1-2, to=2-2]
        \arrow[from=2-1, to=2-2]
    \end{tikzcd}
\end{center}
    One can verify that \((U(\Theta), U(\iota))\) is in fact a globalization of \(U(\alpha)\)
    in the sense of Definition~\ref{def:GlobalizationForGeneralizewdParActions} below.
    Thus, it gives a construction to globalize generalized partial actions.
    We  will come back to this example in Example~\ref{e: partial action on U(L) 2}, once we have the necessary tools to prove the existence of globalizations for generalized partial actions.
\end{example}

At this point, we address the existence of a universal global action and a globalization for a given generalized partial action \(\alpha: G \curvearrowright A\) of \(G\) on an algebra \(A \in \mathcal{V}(I)\).

\begin{definition}\label{def:GlobalizationForGeneralizewdParActions}
    Let \(\alpha: G \curvearrowright A\) be a generalized partial action of \(G\) on the algebra \(A \in \mathcal{V}(I)\). We define:
    \begin{enumerate}[(i)]
        \item The \textbf{universal global action} of \(\alpha\) to be a pair \((\Theta, \iota)\), where \(\Theta: G \curvearrowright B\) is a global action on an algebra \(B \in \mathcal{V}(I)\) and \(\iota: A \to B\) is a \(G\)-equivariant map, such that for any global action \(\beta: G \curvearrowright X\) on an algebra \(X \in \mathcal{V}(I)\) and any \(G\)-equivariant map \(\psi: A \to X\), there exists a unique $G$-equivariant map \(\widetilde{\psi}: B \to X\) such that \(\widetilde{\psi} \circ \iota = \psi\).
        \item A \textbf{globalization} of \(\alpha\) to be a pair \((\theta, \nu)\), where \(\theta: G \curvearrowright B\) is a global action on an algebra \(B \in \mathcal{V}(I)\) and \(\nu: A \to B\) is a \(G\)-equivariant map, such that \(\alpha\) and \(\theta|_{\im \nu}\) are isomorphic as generalized partial actions via \(\nu\) and \(\bigcup_{g \in G}\theta_{g}(\im \nu)\) generates \(B\) as an algebra.
    \end{enumerate}
\end{definition}

We are going to show that the universal global action of a generalized partial group action on a non-associative algebra always exists. Moreover, if we assume that the partial action arises from a partial representation, we shall prove that the universal global action is a globalization of the partial action. 

 Let \( A \in \mathcal{V}(I) \),  and denote by \((\Theta, \iota)\)  the universal global action of the partial action \(\alpha: G \curvearrowright A\) on the \(K\)-module \(A\), as described in Theorem~\ref{t: universal global action}. Now, consider \(F_I(\Lambda(A))\), the free algebra generated by the \(K\)-module \(\Lambda(A)\) in \(\mathcal{V}(I)\), together with the  canonical linear map \(j: \Lambda(A) \to F_I(\Lambda(A))\). Then by the universal property of \(F_I(\Lambda(A))\), there exists a unique global action \(\widetilde{\Theta}: G \curvearrowright F_I(\Lambda(A))\)  on the algebra \(F_I(\Lambda(A))\) such that \(\widetilde{\Theta}_g \circ j = j \circ \Theta_g\). That is, \(j\) is a $G$-equivariant map.

Let \(J\) denote the ideal of \(F_I(\Lambda(A))\) generated by the set 
\[
    \{j(\lfloor g, a \rfloor) j(\lfloor g, b \rfloor) - j(\lfloor g, ab \rfloor) \mid a, b \in A, \, g \in G\},
\]
and define 
\begin{equation}\label{eq def omega}
    \Omega(A) = F_I(\Lambda(A)) / J.
\end{equation}
 Obviously, $\Omega(A) \in \mathcal{V}(I) .$
Let \(p: F_I(\Lambda(A)) \to \Omega(A)\) be the canonical projection. It is straightforward to verify that \(J\) is a \(\widetilde{\Theta}\)-invariant ideal. Consequently, we obtain a global action 
\begin{equation}\label{eq def theta}
    \theta: G \curvearrowright \Omega(A),
\end{equation}
 on the algebra $\Omega(A)$ satisfying \(p \circ \widetilde{\Theta}_g = \theta_g \circ p\). In other words, \(p\) is a $G$-equivariant map.

Additionally, consider the map 
\begin{equation}\label{eq def nu}
    \nu := p \circ j \circ \iota: A \to \Omega(A).
\end{equation}
Since \(\nu\) is a composition of \(G\)-equivariant maps, it is \(G\)-equivariant too. Also, for any \(a, b \in A\), we have 
\[
    \nu(ab) = p(j(\lfloor 1, ab \rfloor)) = p(j(\lfloor 1, a \rfloor) j(\lfloor 1, b \rfloor)) = p(j(\lfloor 1, a \rfloor)) p(j(\lfloor 1, b \rfloor)) = \nu(a) \nu(b),
\]
showing that \(\nu\) is an algebra homomorphism.

\begin{remark}\label{r: generators of Omega A}
    For any generator $p(j(\lfloor g,a\rfloor))$ of $\Omega(A)$ (as an algebra), we have 
    \[
        p(j(\lfloor g,a\rfloor))=p\circ j\circ \Theta_g\circ \iota(a)=p\circ \widetilde\Theta_g\circ j\circ \iota(a)=\theta_g\circ p\circ j\circ \iota(a)=\theta_g(\nu(a)),
    \]
which implies that \(\Omega(A)\) is generated as an algebra by \(\bigcup_{g \in G} \theta_g(\nu(A))\).
       
\end{remark}

\begin{theorem}\label{p: universal global action in varieties}
    Let \( A \in \mathcal{V}(I) \), \(\alpha: G \curvearrowright A\) be a generalized partial action of \(G\) on the algebra \(A\), and let \(\theta: G \curvearrowright \Omega(A)\) and \(\nu: A \to \Omega(A)\) be as in~\eqref{eq def theta} and~\eqref{eq def nu}, respectively. Then, \((\theta, \nu)\) is a universal global action for \(\alpha\).
\end{theorem}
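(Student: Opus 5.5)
Given a global action \(\beta: G \curvearrowright X\) on an algebra \(X\in\mathcal{V}(I)\) and a \(G\)-equivariant algebra homomorphism \(\psi: A\to X\), I will build \(\widetilde{\psi}\) in three layers, mirroring the construction of \(\Omega(A)\): first through \(\Lambda(A)\), then through \(F_I(\Lambda(A))\), then through the quotient by \(J\).

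\textbf{Step 1: the module layer.} Regard \(\alpha\) as a partial action on the \(K\)-module \(A\), \(\beta\) as a global action on the \(K\)-module \(X\), and \(\psi\) as a \(G\)-equivariant linear map. By Theorem~\ref{t: universal global action} there is a unique \(G\)-equivariant linear map \(\psi_1:\Lambda(A)\to X\) with \(\psi_1\circ\iota=\psi\). By the remark following that theorem, it is given by \(\psi_1(\lfloor g,a\rfloor)=\beta_g(\psi(a))\).

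\textbf{Step 2: the free-algebra layer.} The universal property of \(F_I(\Lambda(A))\) gives a unique algebra homomorphism \(\psi_2:F_I(\Lambda(A))\to X\) with \(\psi_2\circ j=\psi_1\). To see that \(\psi_2\) is \(G\)-equivariant, fix \(g\in G\). Both \(\psi_2\circ\widetilde{\Theta}_g\) and \(\beta_g\circ\psi_2\) are algebra homomorphisms. Their restrictions along \(j\) are \(\psi_1\circ\Theta_g\) and \(\beta_g\circ\psi_1\), which agree because \(\psi_1\) is equivariant. Uniqueness in the universal property then forces \(\psi_2\circ\widetilde{\Theta}_g=\beta_g\circ\psi_2\).

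\textbf{Step 3: passing to \(\Omega(A)\).} I need \(\psi_2(J)=0\), and it suffices to check this on the generators of \(J\), since \(\psi_2\) is an algebra homomorphism and so its kernel is an ideal. For \(a,b\in A\) and \(g\in G\),
\[
\psi_2\big(j(\lfloor g,a\rfloor)j(\lfloor g,b\rfloor)-j(\lfloor g,ab\rfloor)\big)=\beta_g(\psi(a))\beta_g(\psi(b))-\beta_g(\psi(ab)),
\]
and this vanishes because \(\beta_g\) and \(\psi\) are algebra homomorphisms. Hence \(\psi_2\) factors as \(\psi_2=\widetilde{\psi}\circ p\) for an algebra homomorphism \(\widetilde{\psi}:\Omega(A)\to X\). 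It is \(G\)-equivariant because \(p\) is surjective and both \(p\) and \(\psi_2\) are equivariant. Moreover \(\widetilde{\psi}\circ\nu=\psi_2\circ j\circ\iota=\psi_1\circ\iota=\psi\).

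\textbf{Uniqueness.} Let \(\chi:\Omega(A)\to X\) be another \(G\)-equivariant algebra homomorphism with \(\chi\circ\nu=\psi\). By Remark~\ref{r: generators of Omega A}, \(\Omega(A)\) is generated as an algebra by the elements \(\theta_g(\nu(a))\). On these,
\[
\chi(\theta_g(\nu(a)))=\beta_g(\chi(\nu(a)))=\beta_g(\psi(a))=\widetilde{\psi}(\theta_g(\nu(a))).
\]
Two algebra homomorphisms that agree on a generating set coincide, so \(\chi=\widetilde{\psi}\).

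\textbf{Main obstacle.} I expect the delicate point to be interpretive rather than computational. Definition~\ref{def:GlobalizationForGeneralizewdParActions} says ``\(G\)-equivariant map'', but for algebras this must be read as an algebra homomorphism (Definition~\ref{equivariant map} allows either). Without that reading, \(J\) need not be killed and uniqueness fails. I will therefore state explicitly that \(\psi\) and \(\widetilde{\psi}\) are \(G\)-equivariant algebra homomorphisms.

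The other point needing care is that \(\widetilde{\Theta}\) is an action and that \(J\) is \(\widetilde{\Theta}\)-invariant. Both were asserted before the theorem, and the action property follows from the same uniqueness argument used in Step 2. I will take them as given.
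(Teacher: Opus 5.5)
Your proof is correct and follows the paper's route: the universal property of $\Lambda(A)$, then the universal property of $F_I(\Lambda(A))$, then the quotient by $J$, with the same computation showing that $J$ is killed. Your explicit uniqueness argument via Remark~\ref{r: generators of Omega A} is a useful addition, since the paper only records uniqueness of the factorization through $p$ and leaves uniqueness with respect to $\nu$ implicit.
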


\begin{proof}
    Let \(\beta: G \curvearrowright X\) be a global action on the algebra \(X \in \mathcal{V}(I)\), and let \(\psi: A \to X\) be a \(G\)-equivariant morphism.
    Since \((\Theta, \iota)\) is a universal global action for the partial action \(\alpha: G \curvearrowright A\) on the \(K\)-module \(A\), there exists a unique \(K\)-linear \(G\)-equivariant map \(\widetilde{\psi}: \Lambda(A) \to X\), given by 
    \[
        \widetilde{\psi}(\lfloor g, a \rfloor) = \beta_g(\psi(a)),
    \]
    such that \(\widetilde{\psi} \circ \iota = \psi\).
    By the universal property of \(F_I(\Lambda(A))\), there exists a unique algebra homomorphism \(\overline{\psi}: F_I(\Lambda(A)) \to X\) such that \(\overline{\psi} \circ j = \widetilde{\psi}\).
    We  verify that \(\overline{\psi}\) is \(G\)-equivariant.
    For any \(\lfloor g, a \rfloor \in \Lambda(A)\) and \(h \in G\), we compute:
    \[
        \overline{\psi}(\widetilde{\Theta}_h(j(\lfloor g, a \rfloor))) 
        = \overline{\psi}(j(\Theta_h(\lfloor g, a \rfloor))) 
        = \overline{\psi}(j(\lfloor hg, a \rfloor)) 
        = \widetilde{\psi}(\lfloor hg, a \rfloor),
    \]
    and by the definition of \(\widetilde{\psi}\),
    \[
        \widetilde{\psi}(\lfloor hg, a \rfloor) = \beta_{hg}(\psi(a)) = \beta_h(\beta_g(\psi(a))) = \beta_h(\widetilde{\psi}(\lfloor g, a \rfloor)) = \beta_h(\overline{\psi}(j(\lfloor g, a \rfloor))).
    \]
    Thus, \(\overline{\psi}\) is \(G\)-equivariant.

    Next, we observe that \(J \subseteq \ker(\overline{\psi})\). Indeed, for \(a, b \in A\) and \(g \in G\), we calculate:
    \begin{align*}
        \overline{\psi}(j(\lfloor g, a \rfloor)j(\lfloor g, b \rfloor) - j(\lfloor g, ab \rfloor)) 
        &= \widetilde{\psi}(\lfloor g, a \rfloor)\widetilde{\psi}(\lfloor g, b \rfloor) - \widetilde{\psi}(\Theta_g(\lfloor 1, ab \rfloor)) \\
        &= \widetilde{\psi}(\Theta_g(\iota(a)))\widetilde{\psi}(\Theta_g(\iota(b))) - \widetilde{\psi}(\Theta_g(\iota(ab))) \\
        &= \beta_g(\psi(a))\beta_g(\psi(b)) - \beta_g(\psi(ab)) = 0.
    \end{align*}
    Thus, there exists a unique algebra homomorphism \(\widehat{\psi}: \Omega(A) \to X\) such that \(\widehat{\psi} \circ p = \overline{\psi}\). Consequently, \(\widehat{\psi} \circ \nu = \psi\).
    Since  \(\overline{\psi}\)  is  \(G\)-equivariant, it follows that \(\widehat{\psi}\) is also \(G\)-equivariant.  Hence, \((\theta, \nu)\) is  universal global action of \(\alpha\).
\end{proof}

\begin{corollary}\label{c: universal globalization}
    Let \( A \in \mathcal{V}(I) \), \(\alpha: G \curvearrowright A\) be a generalized partial action of \(G\) on the algebra \(A\), and let \(\theta: G \curvearrowright \Omega(A)\) and \(\nu: A \to \Omega(A)\) be as in~\eqref{eq def theta} and~\eqref{eq def nu}, respectively. If \(\alpha\) is globalizable, then \((\theta, \nu)\) is a globalization for \(\alpha\).
\end{corollary}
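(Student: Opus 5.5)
Assume $\alpha$ is globalizable, and fix a globalization $(\beta,\mu)$ with $\beta: G \curvearrowright X$, $X\in\mathcal{V}(I)$, so that $\mu: A\to X$ is an injective $G$-equivariant algebra homomorphism, $\alpha\cong\beta|_{\mu(A)}$ via $\mu$, and $\bigcup_g\beta_g(\mu(A))$ generates $X$. We must show that $\nu:A\to\Omega(A)$ has the same properties. Two of them are already available: $\nu$ is a $G$-equivariant algebra homomorphism (shown just before Remark~\ref{r: generators of Omega A}), and $\Omega(A)$ is generated by $\bigcup_g \theta_g(\nu(A))$ by Remark~\ref{r: generators of Omega A}. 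What remains is to show that $\nu$ is injective and that it induces an isomorphism of generalized partial actions $\alpha\cong\theta|_{\nu(A)}$. The latter amounts to the equality $\nu(A_g)=\nu(A)\cap\theta_g(\nu(A))$ for every $g\in G$, together with $\nu$ intertwining $\alpha_g$ and $\theta_g$. The intertwining is just $G$-equivariance, and the inclusion $\nu(A_g)\subseteq \nu(A)\cap\theta_g(\nu(A))$ follows from $\nu(\alpha_g(x))=\theta_g(\nu(x))$.

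The tool for the remaining parts is Theorem~\ref{p: universal global action in varieties}. Applied to $\psi=\mu$, it gives a $G$-equivariant algebra homomorphism $\widehat{\mu}:\Omega(A)\to X$ with $\widehat{\mu}\circ\nu=\mu$. Injectivity follows at once: if $\nu(a)=\nu(b)$, then $\mu(a)=\mu(b)$, and so $a=b$. The same relation shows that $\nu^{-1}$ on $\nu(A)$ is given by $\mu^{-1}\circ\widehat{\mu}$.

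The main obstacle is the reverse inclusion $\nu(A)\cap\theta_g(\nu(A))\subseteq\nu(A_g)$. Take $y=\nu(a)=\theta_g(\nu(b))$ and apply $\widehat{\mu}$; equivariance gives $\mu(a)=\beta_g(\mu(b))\in \mu(A)\cap\beta_g(\mu(A))=\mu(A_g)$. Because $(\beta,\mu)$ is a globalization and $\mu$ is injective, this forces $a\in A_g$ and $b\in A_{g^{-1}}$ with $\alpha_g(b)=a$. Hence $y=\nu(a)\in\nu(A_g)$, which gives the required equality $\nu(A_g)=\nu(A)\cap\theta_g(\nu(A))$. The point to check carefully here is that the isomorphism $\alpha\cong\beta|_{\mu(A)}$ really identifies $A_g$ with $\mu^{-1}(\mu(A)\cap\beta_g\mu(A))$; this is exactly Definition~\ref{equivariant map} applied to $\mu^{-1}$.

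Finally, with these identifications, $\nu^{-1}:\nu(A)\to A$ is $G$-equivariant for the restricted action, since $\nu^{-1}(\theta_g(\nu(b)))=\alpha_g(b)$ for $b\in A_{g^{-1}}$. Combined with generation by $\bigcup_g\theta_g(\nu(A))$, this shows that $(\theta,\nu)$ satisfies Definition~\ref{def:GlobalizationForGeneralizewdParActions}(ii).
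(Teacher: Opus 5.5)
Your proposal is correct and takes essentially the same route as the paper. You apply the universal property of $(\theta,\nu)$ to the given globalization to obtain a $G$-equivariant $\widehat{\mu}\colon\Omega(A)\to X$ with $\widehat{\mu}\circ\nu=\mu$, which gives injectivity of $\nu$; you get the reverse inclusion $\nu(A)\cap\theta_g(\nu(A))\subseteq\nu(A_g)$ by pushing through $\widehat{\mu}$ and using $\mu(A)\cap\beta_g(\mu(A))=\mu(A_g)$; and generation comes from Remark~\ref{r: generators of Omega A}. Your extra check that $\nu^{-1}$ is equivariant on $\nu(A)$ is a detail the paper leaves implicit, and you handle it correctly.
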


\begin{proof}
    Let \((\beta, \psi)\) be a globalization for \(\alpha\), where \(\beta: G \curvearrowright X\) is a global action on   the algebra \(X \in \mathcal{V}(I)\). By the universal property of \((\theta, \nu)\), there exists a unique \(G\)-equivariant map \(\widetilde{\psi}: \Omega(A) \to X\) such that \(\widetilde{\psi} \circ \nu = \psi\). Since \(\psi\) is injective, it follows that \(\nu\) is injective as well.
    By Remark~\ref{r: generators of Omega A}, to show that $(\theta,\nu)$ is a globalization for $\alpha$  it only remains to verify that \(\nu(A_g) = \theta_g(\nu(A)) \cap \nu(A)\), for any $g\in G$. Since $\nu$ is $G$-equivariant, the inclusion \(\nu(A_g)\subseteq \theta_g(\nu(A)) \cap \nu(A)\) is immediate. For the other inclusion, let \(a,b\in A\) such that \(\theta_g(\nu(a))=\nu(b)\).
    Then
    \[
        \psi(b)=\widetilde \psi(\nu(b))=\widetilde \psi(\theta_g(\nu(a)))=\beta_g(\psi(a))\in \psi(A)\cap \beta_g(\psi(A))=\psi(A_g),
    \]
  whence we have that $b\in A_g$, and hence \(\theta_g(\nu(A))\cap \nu(A)\subseteq \nu(A_g)\).
\end{proof}

\begin{corollary}\label{c: universal globalization 1}
    Let \( A \in \mathcal{V}(I) \) and \( \pi: G \to \operatorname{End}_0(A) \) be a  (non-necessarily ideal)  partial representation of \( G \) on \(A\). Then, \((\theta, \nu)\) is a globalization for the generalized partial action \(\alpha: G \curvearrowright A\) induced by \(\pi\).
\end{corollary}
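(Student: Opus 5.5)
By Corollary~\ref{c: universal globalization}, it suffices to show that the generalized partial action $\alpha$ induced by $\pi$ is globalizable. In other words, we must exhibit some global action $\beta: G\curvearrowright X$ on an algebra $X\in\mathcal{V}(I)$ together with an injective $G$-equivariant algebra homomorphism $\psi: A\to X$. This $\psi$ should identify $\alpha$ with the restriction of $\beta$ to $\psi(A)$, and $\bigcup_g \beta_g(\psi(A))$ should generate $X$. Then $(\theta,\nu)$ is a globalization by that corollary. Here $A_g=\im\pi_g$ and $\alpha_g=\pi_g|_{A_{g^{-1}}}$, and these are subalgebras since each $\pi_g$ is an algebra endomorphism.

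The natural candidate for $X$ is the ``coinduced'' algebra used in \cite{Article_Dokuchaev-Exel_2005_AOCPBPAEAAPR}. Take the direct power $A^G$ of all functions $G\to A$ with pointwise operations, which lies in $\mathcal{V}(I)$ since varieties are closed under products. Equip it with the global action $\beta_g(f)(h)=f(g^{-1}h)$, which consists of algebra automorphisms. Define $\psi(a)(h)=\pi_{h^{-1}}(a)$. This is an algebra homomorphism because each $\pi_{h^{-1}}$ is one. It is injective because $\psi(a)(1)=\pi_1(a)=a$. Let $X$ be the subalgebra generated by $\bigcup_g\beta_g(\psi(A))$. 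It is $\beta$-invariant, so the generation condition holds by construction.

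Next we check equivariance. For $a\in A_{g^{-1}}$, i.e. $a=\pi_{g^{-1}}(c)$, we have $\psi(\alpha_g(a))(h)=\pi_{h^{-1}}\pi_g(a)$ and $\beta_g(\psi(a))(h)=\pi_{h^{-1}g}(a)$. Since $a=\varepsilon_{g^{-1}}(a)$, we get $\pi_{h^{-1}}\pi_g(a)=\pi_{h^{-1}g}\varepsilon_{g^{-1}}(a)=\pi_{h^{-1}g}(a)$, using $\pi_x\pi_y=\pi_{xy}\varepsilon_{y^{-1}}$. In particular $\psi(A_g)\subseteq \psi(A)\cap\beta_g(\psi(A))$.

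The main step, and the expected obstacle, is the reverse inclusion $\psi(A)\cap\beta_g(\psi(A))\subseteq\psi(A_g)$. Suppose $\psi(b)=\beta_g(\psi(a))$. Evaluating at $h=g$ gives $\pi_{g^{-1}}(b)=\pi_1(a)=a$. Evaluating at $h=1$ gives $b=\pi_g(a)$, so $b\in\im\pi_g=A_g$. Then $a=\pi_{g^{-1}}(b)$ with $b\in A_g$, which is consistent, and it shows that $\psi^{-1}$ restricted to these pieces is also equivariant. Hence $\psi$ is an isomorphism between $\alpha$ and $\beta|_{\psi(A)}$ in the sense of Definition~\ref{equivariant map}.

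With globalizability established, Corollary~\ref{c: universal globalization} gives that $(\theta,\nu)$ is a globalization of $\alpha$. The only delicate point is the careful use of the partial representation identities in the equivariance computation. No ideal hypothesis is needed anywhere, which matches the statement.
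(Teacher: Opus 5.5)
Your proof is correct, but it takes a different route from the paper. The paper never leaves $\Omega(A)$. It extends $\tau(\lfloor g,a\rfloor)=\pi_g(a)$ to an algebra homomorphism $\widetilde\tau\colon F_I(\Lambda(A))\to A$ and notes that $J\subseteq\ker\widetilde\tau$ because each $\pi_g$ is multiplicative. This gives a retraction $\widehat\tau\colon\Omega(A)\to A$ with $\widehat\tau\circ\nu=\mathrm{id}_A$, hence injectivity of $\nu$. Applying $\widehat\tau$ to $\theta_g(\nu(a))=\nu(b)$ then gives $b=\pi_g(a)\in A_g$ directly, and generation comes from Remark~\ref{r: generators of Omega A}.

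You instead build a concrete globalization inside the direct power $A^G$, with the shift action and $\psi(a)(h)=\pi_{h^{-1}}(a)$. Your verifications check out: the evaluations at $h=1$ and $h=g$, and the identity $\pi_{h^{-1}}\pi_g=\pi_{h^{-1}g}\varepsilon_{g^{-1}}$. You then transfer the conclusion to $(\theta,\nu)$ through Corollary~\ref{c: universal globalization}.

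The two arguments have the same core. If $\widetilde\psi\colon\Omega(A)\to X$ is the map given by universality, then evaluation at $1\in G$ composed with $\widetilde\psi$ is exactly the paper's $\widehat\tau$: both are algebra homomorphisms sending $\theta_g(\nu(a))$ to $\pi_g(a)$. So your step ``evaluate at $h=1$ to get $b=\pi_g(a)$'' is the paper's step ``apply $\widehat\tau$''.

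The paper's version is shorter and self-contained; it needs neither an auxiliary algebra nor Theorem~\ref{p: universal global action in varieties} and Corollary~\ref{c: universal globalization}. Yours costs a little more but gives extra information: an explicit globalization inside $A^G$ of the type in \cite{Article_Dokuchaev-Exel_2005_AOCPBPAEAAPR}, onto which $\Omega(A)$ maps surjectively. This makes the link with the classical construction visible without any ideal or unitality hypotheses.
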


\begin{proof}
    Recall that the \(K\)-linear map \(\tau: \Lambda(A) \to A\), defined by \(\tau(\lfloor g, a \rfloor) = \pi_g(a)\), satisfies \(\tau \circ \iota = \mathrm{id}_A\), where \(\iota: A \to \Lambda(A)\) is given by \(\iota(a) = \lfloor 1, a \rfloor\). By the universal property of \(F_I(\Lambda(A))\), there exists a unique algebra homomorphism \(\widetilde{\tau}: F_I(\Lambda(A)) \to A\) such that \(\widetilde{\tau} \circ j = \tau\). Now, since \(\pi_g: A \to A\) is an algebra homomorphism, it is straightforward to verify that the ideal \(J\), generated by the set 
    \[
        \{j(\lfloor g, a \rfloor)j(\lfloor g, b \rfloor) - j(\lfloor g, ab \rfloor) \mid a, b \in A, \, g \in G\},
    \]
    is contained in \(\ker \widetilde{\tau}\). Consequently, there exists a unique algebra homomorphism \(\widehat{\tau}: \Omega(A) \to A\) such that \(\widehat{\tau} \circ p = \widetilde{\tau}\). Hence \(\widehat{\tau} \circ \nu = \mathrm{id}_A\), which implies that \(\nu\) is an injective \(G\)-equivariant map.
     Clearly,  \(\nu(A_g)\subseteq \theta_g(\nu(A)) \cap \nu(A)\) for all $g\in G$.
    For the converse inclusion, let \(a,b \in A\) and \(g \in G\) be such that \(\theta_g(\nu(a))=\nu(b)\).
    Then we have that 
    \[
        \theta_g(\nu(a))
        = \theta_g \circ p \circ  j \circ \iota (a)
        = p \circ \widetilde{\Theta}_g \circ j\circ \iota (a)
        =  p \circ  j\circ \Theta _g \circ \iota (a)
        = p (  j (\lfloor g, a \rfloor )) 
    \]
    equals
    \(\nu (b) = p (  j ( \lfloor 1, b \rfloor )),\) 
    which implies that 
    \[
      j (\lfloor g, a \rfloor ) -  j ( \lfloor 1, b \rfloor ) \in J.
    \]
    Applying $\widetilde{\tau},$ we obtain 
    \[
      0=\tau (\lfloor g, a \rfloor )   -   \tau ( \lfloor 1, b \rfloor ) = \pi _g (a) - \pi _1 (b) =  \pi _g (a) - b,
    \]
    so that  $b = \pi _g (a) \in  A_g,$ which shows that  \( \theta_g(\nu(A)) \cap \nu(A) \subseteq \nu(A_g). \)  Consequently,
   \(\nu(A_g) = \theta_g(\nu(A)) \cap \nu(A)\), and Remark~\ref{r: generators of Omega A} implies that $(\theta,\nu)$ is a globalization for $\alpha$.
\end{proof}

\begin{example}\label{e: partial action on U(L) 2}
    Now we return to Example~\ref{e: partial action on U(L)}.
    If we include the algebras and morphisms given by \(\Omega\) to the diagram in Example~\ref{e: partial action on U(L)},
    then we obtain the following commutative diagram, in which all maps are \(G\)-equivariant:
    \[\begin{tikzcd}[sep=2.25em]
        {\Omega(L)} & {U(\Omega(L))} \\
        L & {U(L)} & {\Omega(U(L))} \\
        {\Lambda(L)} & {U(\Lambda(L))}
        \arrow[from=1-1, to=1-2]
        \arrow["\psi", curve={height=-6pt}, dashed, from=1-2, to=2-3]
        \arrow["{\nu_{L}}", from=2-1, to=1-1]
        \arrow[from=2-1, to=2-2]
        \arrow["\iota"', from=2-1, to=3-1]
        \arrow["{U(\nu_{L})}", from=2-2, to=1-2]
        \arrow["{\nu_{U(L)}}"', from=2-2, to=2-3]
        \arrow["{U(\iota)}"', from=2-2, to=3-2]
        \arrow["\varphi", curve={height=-6pt}, dashed, from=2-3, to=1-2]
        \arrow["\phi", curve={height=-6pt}, dashed, from=2-3, to=3-2]
        \arrow[from=3-1, to=3-2]
    \end{tikzcd}\]
    The maps $\phi$ and $\varphi$ are induced by the universal property of $\Omega(U(L))$.
    The map $\psi$ is induced by the universal property of $U(\Omega(L))$ and by the fact that the composition $L\to U(L)\to \Omega(U(L))$ induces a Lie algebra homomorphism $\Omega(L)\to\Omega(U(L))$.
    Finally, because both $\psi$ and $\varphi$ are induced by universal properties, a standard argument shows that they are inverse isomorphisms.
\end{example}

\section{Covariant representations of partial representations}


This section is devoted to the covariant representation on algebras. For convenience, we use the symbol $\cdot$ to denote the product on an algebra.     Also, all algebras will be associative (with or without unit) or Lie algebras over \(K\) unless otherwise stated.

 We recall that if $A$ is a Lie algebra, then a $K$-module $M$ is an $A$-module if there exists a $K$-bilinear map 
\(
A \times M \to M, (x, m) \mapsto x m,
\)
satisfying the condition 
\begin{equation} \label{eq:module_condition}
    (x \cdot y) m = x (y  m) - y (x m) \quad \text{for all } x, y \in L, \; m \in M.
\end{equation}
Equivalently, $M$ is an $A$-module if there exists a representation (Lie algebra homomorphism) 
\[
    \varphi: A \to \mathfrak{gl}(M),
\]
where $\mathfrak{gl}(M)$ denotes the Lie algebra of $K$-linear endomorphisms of $M$.

    Given an algebra \(A\) and a (global) representation \(\rho: G \to  \operatorname{End}_0 (A)\), by a covariant representation of \(\rho\) we mean a pair \((\pi, M)\), where \(M\) is an \(A\)-module and \(\pi: G \to \operatorname{End}_K(M)\) is a (global) representation such that, for all \(g \in G\), \(x \in A\), and \(m \in M\), the following condition holds:
    \[
        \pi_g(x m) = \rho_g(x) \pi_g(m).
    \]
    A morphism between two covariant representations is just a \(K\)-linear map that is simultaneously a morphism of \(A\)-modules and a morphism of representations.
    We denote by \(\textbf{CovRep}_{\rho}(G, A)\) the category of covariant representations of \(\rho\) and their morphisms.
    If there is no risk of confusion, we will use the notation \(\textbf{CovRep}(A)\) to denote the category \(\textbf{CovRep}_{\rho}(G, A)\).
    The objects of \textbf{CovRep}$(A)$ will be referred to simply as (global) \textbf{covariant representations} of \(A\).

\begin{definition}\label{d: partial covariant}
    Let $\lambda: G \to \operatorname{End}_0(A)$ be  an ideal partial representation of $G$ on \(A\).
    A \textbf{covariant representation} of $\lambda$ is a pair $(\pi, M)$, where $M$ is an $A$-module and $\pi: G \to \operatorname{End}_K(M)$ is a partial representation of $G$ such that, for all $g \in G$, $x \in A$, and $m \in M$, the following conditions hold:
    \begin{enumerate}
        \item $\pi_g(x\, m) = \lambda_g(x)\, \pi_g(m),$
        \item $\varepsilon_g(x\, m) = e_g(x)\, m = x\, \varepsilon_g(m),$
    \end{enumerate}
    where $\varepsilon_g = \pi_g \pi_{g^{-1}}$ and $e_g = \lambda_g \lambda_{g^{-1}}$.
\end{definition}

A \textbf{morphism} between two covariant representations $(\pi, M)$ and $(\xi, N)$ of $\lambda: G \to \operatorname{End}_0(A)$ is a linear map $\psi: M \to N$ that is simultaneously a morphism of  $A$-modules and a morphism of partial representations. Specifically, $\psi: M \to N$ is a morphism if, for all $g \in G$, $x \in A$, and $m \in M$, the following conditions are satisfied:
\begin{enumerate}
    \item $\psi(\pi_g(m)) = \xi_g(\psi(m)),$
    \item $\psi(x\,  m) = x\,  \psi(m).$
\end{enumerate}
The covariant representations of $\lambda: G\to \en_0(A)$, together with  their morphisms, form a category.  We denote this category by \(\textbf{CovRep}_{\lambda}(G, A)\) or simply by
\(\textbf{CovRep}_{\lambda}(A)\).

    \begin{remark}
             Suppose that $\lambda: G \to \operatorname{End}_0(A)$ is  an ideal partial representation of $G$ on a unital associative algebra $A$ and $(\pi, M)$ is a covariant representation  $\lambda $ as in De\-fi\-ni\-tion~\ref{d: partial covariant}. Let, furthermore,  $\alpha: G \curvearrowright A $ be the unital partial action  induced by \(\lambda \) and  $\phi : A \to \en _K (M)$ be the algebra homomorphism determined by the module action of $A$ on $M.$ Then it is easy to check that the pair $(\phi , \pi)  $ 
is a covariant representation of $\alpha $ in $\en _K (M),$ as defined in \cite[Definition 9.10]{Book_Exel_2017_PDSFBAA}.
    \end{remark}

    \begin{remark}
    We call a left $A$-module $M$  unital  if $M= AM  = {\rm span} \{xm \, : \, x\in A, m \in M \}.$ 
        Observe that in Definition~\ref{d: partial covariant} if we suppose that \(\lambda\) is a global representation and that \(M\) is a  unital \(A\)-module, then \((2)\) of Definition~\ref{d: partial covariant} implies that \(\pi\) is a global representation of \(G\) on \(M\).
    \end{remark}

\underline{We fix} now  an ideal partial representation  \(\lambda: G \to \operatorname{End}_0(A)\) of $G$ on $A$.
Our objective in this section is to show that \(\Lambda\) sends covariant representations of \(A\) to covariant representations of \(\Lambda(A)\). Recall that if \(A\) is a Lie (or associative) algebra, then by Theorem~\ref{t: globalization on varieties} we have that \(\Lambda(A)\) is also a Lie (or associative) algebra.

Let $(\pi, M)$ be a covariant representation of $\lambda$.  Then using (1) and (2) of Definition~\ref{d: partial covariant}, an easy calculation similar to that given for \eqref{eq: relations of pi}  shows that the equality 
\begin{equation}\label{eq: relation of pi lambda}
    \lambda_g(x) \pi_h(m) = \pi_g(x \pi_{g^{-1}h}(m)),
\end{equation}  holds for all $g,h \in G, x \in A$ and $m \in M.$ 

Observe next  that $\pi _g (M)$ is a left $A$-submodule of $M$ for any $g\in G .$ Indeed,
it follows by (2) of Definition~\ref{d: partial covariant} that for any $x\in A, m \in M$ and $g\in G$ we have that
$$x \pi _g (m) =  x \varepsilon _g (\pi _g (m))=
\varepsilon _g (x \pi _g (m))= \pi_{g} (\pi _{g^{-1}} (x \pi _g (m)))\in \pi _g(M),$$ 
as claimed. Then we also have for all $g\in G$ that 
\begin{equation}\label{eq: action of lambda_g (A) }
\lambda _g(A) M \subseteq \pi _g (M),
\end{equation}   To see this take  $x\in A, m \in M, g \in G.$ Then using  again (2) of Definition~\ref{d: partial covariant} we obtain
$$\lambda _g (x) m = e_g( \lambda _g (x)) m  
=  \lambda _g (x) \varepsilon _g (m) =
\lambda _g (x) \pi _g (\pi _{g^{-1}} (m))\in A \pi _g (M) \subseteq \pi _g (M), $$ as desired.

We proceed with the following result.
\begin{proposition}\label{p: Lambda preserve covariant representations}
    Let \((\pi, M)\) be a covariant representation of \(\lambda: G \to \operatorname{End}_0(A)\).
    Then, \(\Lambda(M)\) is a \(\Lambda(A)\)-module, and the action of \(\Lambda(A)\) on \(\Lambda(M)\) is given by
    \[
        \lfloor g, x \rfloor \lfloor h, m \rfloor = \lfloor g, x \pi_{g^{-1}h}(m) \rfloor,
    \]
    where \(\lfloor g, x \rfloor \in \Lambda(A)\) and \(\lfloor h, m \rfloor \in \Lambda(M)\).
    Moreover, the following analogue of  \eqref{eq: multiplication equivalence} holds 
    \begin{equation}\label{eq:Analogue}
    \lfloor g, x \rfloor \lfloor h, m \rfloor =  \lfloor h, \lambda_{h^{-1}g}(x) m \rfloor.
\end{equation}  for all $g,h\in G,$ $x\in A$ and $m\in M.$ 
\end{proposition}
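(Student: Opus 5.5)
The plan has three steps: check that the formula gives a well-defined bilinear pairing $\Lambda(A)\times\Lambda(M)\to\Lambda(M)$, prove the symmetric formula \eqref{eq:Analogue}, and then use \eqref{eq:Analogue} to reduce the module axioms to the corresponding axioms for the $A$-module $M$. I expect the well-definedness step to be the main obstacle; everything after it is formal once \eqref{eq:Analogue} is available.

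\emph{Well-definedness.} I follow the proof of Lemma~\ref{l: product} and apply Lemma~\ref{l: computation tool for Lambda} once in each variable to
\[
\phi(g\otimes x,\ k\otimes m):=\lfloor g, x\pi_{g^{-1}k}(m)\rfloor .
\]
\begin{itemize}
\item First variable: let $x\in A_{h^{-1}}=\lambda_{h^{-1}}(A)$. By \eqref{eq: relation of pi lambda},
\[
\lfloor g,\lambda_h(x)\pi_{g^{-1}k}(m)\rfloor=\lfloor g,\pi_h(x\pi_{h^{-1}g^{-1}k}(m))\rfloor=\lfloor gh, x\pi_{(gh)^{-1}k}(m)\rfloor,
\]
where the last equality is the defining relation of $\Lambda(M)$.
\item Second variable: let $m\in \pi_{h^{-1}}(M)$. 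Then $\varepsilon_{h^{-1}}(m)=m$, so the rule $\pi_a\pi_h=\pi_{ah}\varepsilon_{h^{-1}}$ gives $\pi_{g^{-1}k}\pi_h(m)=\pi_{g^{-1}kh}(m)$. Hence the pairing is compatible with $k\otimes \pi_h(m)-kh\otimes m$.
\end{itemize}
Thus $\phi$ descends to $\Lambda(A)\times\Lambda(M)$. It is visibly $K$-bilinear.

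\emph{Proof of \eqref{eq:Analogue}.} Set $k=g^{-1}h$. We showed above that $\pi_k(M)$ is an $A$-submodule, so $x\pi_k(m)\in\pi_k(M)$ and therefore $x\pi_k(m)=\varepsilon_k(x\pi_k(m))=\pi_k\pi_{k^{-1}}(x\pi_k(m))$. Moving $\pi_k$ across the bracket sends $g$ to $gk=h$, so
\[
\lfloor g,x\pi_k(m)\rfloor=\lfloor h,\pi_{k^{-1}}(x\pi_k(m))\rfloor .
\]
By condition (1) of Definition~\ref{d: partial covariant}, $\pi_{k^{-1}}(x\pi_k(m))=\lambda_{k^{-1}}(x)\,\varepsilon_{k^{-1}}(m)$. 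By condition (2) this equals $e_{k^{-1}}(\lambda_{k^{-1}}(x))\,m=\lambda_{k^{-1}}(x)\,m$. Since $k^{-1}=h^{-1}g$, this is exactly \eqref{eq:Analogue}.

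\emph{Module axioms.} Take $\lfloor g,x\rfloor,\lfloor k,y\rfloor\in\Lambda(A)$ and $\lfloor h,m\rfloor\in\Lambda(M)$, and use \eqref{eq:Analogue} to write every expression at the index $h$.
\begin{itemize}
\item Iterated action:
\[
\lfloor g,x\rfloor(\lfloor k,y\rfloor\lfloor h,m\rfloor)=\lfloor h,\lambda_{h^{-1}g}(x)(\lambda_{h^{-1}k}(y)m)\rfloor .
\]
\item Product first: by \eqref{eq: multiplication equivalence}, $\lfloor g,x\rfloor\lfloor k,y\rfloor=\lfloor k,\lambda_{k^{-1}g}(x)y\rfloor$. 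Then
\[
(\lfloor g,x\rfloor\lfloor k,y\rfloor)\lfloor h,m\rfloor=\lfloor h,\lambda_{h^{-1}k}(\lambda_{k^{-1}g}(x)y)\,m\rfloor .
\]
\item Rewriting the coefficient: $\lambda_{h^{-1}k}(\lambda_{k^{-1}g}(x)y)=\lambda_{h^{-1}g}(x)\lambda_{h^{-1}k}(y)$. This follows from $\lambda_a\lambda_b=e_a\lambda_{ab}$ and the fact that $e_a$ is multiplicative and fixes the ideal $\lambda_a(A)$, as in \eqref{eq: relations of pi}.
\end{itemize}
In the associative case the module identity on $\Lambda(M)$ therefore reduces, at index $h$, to $(x'y')m=x'(y'm)$ in $M$. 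In the Lie case it reduces to \eqref{eq:module_condition} for the elements $x'=\lambda_{h^{-1}g}(x)$ and $y'=\lambda_{h^{-1}k}(y)$. Both hold because $M$ is an $A$-module.
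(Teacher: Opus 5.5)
Your proposal is correct and follows essentially the same route as the paper. You descend $\phi$ via Lemma~\ref{l: computation tool for Lambda} in each variable. You get \eqref{eq:Analogue} by writing $x\pi_{g^{-1}h}(m)=\varepsilon_{g^{-1}h}(x\pi_{g^{-1}h}(m))$ and moving $\pi_{g^{-1}h}$ across the bracket. You then reduce the module identity to the one in $M$ at a single index: you normalize at the module index $h$ using \eqref{eq:Analogue} and \eqref{eq: multiplication equivalence}, whereas the paper normalizes at $g$ using \eqref{eq: relation of pi lambda}; you also treat the associative case, which the paper omits.

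One justification is missing, in the first-variable check. The last equality applies the defining relation of $\Lambda(M)$ to $x\pi_{h^{-1}g^{-1}k}(m)$, so you must note that this element lies in $M_{h^{-1}}=\pi_{h^{-1}}(M)$. This holds because $x\in\lambda_{h^{-1}}(A)$ and $\lambda_{h^{-1}}(A)M\subseteq\pi_{h^{-1}}(M)$, as established just before the proposition.
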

\begin{proof} Similarly as in the proof of  Lemma~\ref{l: product} we show that the map
    \begin{align} \label{eq: Lambda(L) module}
        \begin{split}
            \Lambda(A) \times \Lambda(M) &\to \Lambda(M), \\
            (\lfloor g, x \rfloor, \lfloor h, m \rfloor) &\mapsto \lfloor g, x \rfloor \lfloor h, m \rfloor = \lfloor g, x \pi_{g^{-1}h}(m) \rfloor.
        \end{split}
    \end{align}
    is well-defined. Indeed, let $\alpha: G \curvearrowright A $  be the partial action induced by $\lambda $ and 
    $\beta: G \curvearrowright M $ that induced by $\pi .$ Denote by  \( \phi: (KG \otimes A)  \times (KG \otimes M) \to \Lambda(M) \) the map defined   by \( \phi(g \otimes x, k \otimes m) := \lfloor g, x \pi_{g^{-1}k}(m) \rfloor \). Then for any  $g,h, k \in G,$
    \( x \in A_{h^{-1}} \) and $m \in M$ we see that
    \begin{align*}
        \phi( g \otimes \alpha_{h}(x) - gh \otimes x, k \otimes m)
        &= \lfloor g,\alpha_h(x)\pi_{g^{-1}k}(m)\rfloor-\lfloor gh,x\pi_{h^{-1}g^{-1}k}(m)\rfloor \\ 
        &= \lfloor g,\lambda_h(x)\pi_{g^{-1}k}(m)\rfloor-\lfloor gh,x\pi_{h^{-1}g^{-1}k}(m)\rfloor \\
        (\text{by }\eqref{eq: relation of pi lambda})&= \lfloor g,\pi_h(x\pi_{h^{-1}g^{-1}k}(m))\rfloor-\lfloor gh,x\pi_{h^{-1}g^{-1}k}(m)\rfloor \\
        (\text{by }\eqref{eq: action of lambda_g (A) })&= \lfloor g,\beta_h(x\pi_{h^{-1}g^{-1}k}(m))\rfloor-\lfloor gh,x\pi_{h^{-1}g^{-1}k}(m)\rfloor \\
        &=\lfloor gh,x\pi_{h^{-1}g^{-1}k}(m)\rfloor-\lfloor gh,x\pi_{h^{-1}g^{-1}k}(m)\rfloor=0.
    \end{align*}
    Consequently, it follows  from Lemma~\ref{l: computation tool for Lambda}, there exists a unique linear map \( \phi_0 : \Lambda(A) \times  (KG \otimes A) \to \Lambda(
    M) \) such that  \(\phi_0( \lfloor g, x \rfloor , h \otimes m ) = \lfloor g, x \pi_{g^{-1}h}(m) \rfloor \). It is easy to see that 
    $$\phi_0(\lfloor k, x \rfloor, g \otimes \beta_{h}(m) - gh \otimes m) = 0,$$ for all $g,h,k\in G, x \in A$ and $m\in M_{h^{-1}},$ so that 
    using again Lemma~\ref{l: computation tool for Lambda},  we obtain that \eqref{eq: Lambda(L) module} is  well-defined.

    Next, in order to show \eqref{eq:Analogue} recall that   $\pi _g (M)$ is a left $A$-module and for $g, h \in G$, $x, y \in A$, and $m \in M$ compute
    \begin{align*}
        \lfloor g, x \pi_{g^{-1}h}(m) \rfloor 
        &= \lfloor g, \pi_{g^{-1}h} \pi_{h^{-1}g}(x \pi_{g^{-1}h}(m)) \rfloor  \\
        (\text{by }\eqref{eq: relation of pi lambda})&=\lfloor g, \pi_{g^{-1}h}(\lambda_{h^{-1}g}(x) m) \rfloor \\
        (\text{by }\eqref{eq: action of lambda_g (A) })&=\lfloor g, \beta_{g^{-1}h}(\lambda_{h^{-1}g}(x) m) \rfloor \\
        &= \lfloor h, \lambda_{h^{-1}g}(x) m \rfloor,
    \end{align*}
    proving  \eqref{eq:Analogue}.

    It remains to prove that  \eqref{eq: Lambda(L) module} gives rise to a left $\Lambda(A)$-module structure   on $\Lambda(M).$ Suppose  that $A$  is a Lie algebra. Then, taking $g,h,k \in G,$ $x,y \in A$ and $m \in M,$ we see, using \eqref{eq: Lambda(L) module} and \eqref{eq:Analogue}, that  
    \begin{align*}
        \lfloor g, x  \rfloor & (\lfloor h, y  \rfloor  \lfloor k, m  \rfloor)
        - \lfloor h, y  \rfloor (\lfloor g, x  \rfloor \lfloor k, m  \rfloor) \\
        &= \lfloor g, x  \rfloor \lfloor h,y  \pi_{h^{-1}k} (m) \rfloor - \lfloor h, y  \rfloor \lfloor g,x  \pi_{g^{-1}k} (m) \rfloor\\ 
        &= \lfloor g, x \pi_{g^{-1}h} (y  \pi_{h^{-1}k} (m)  ) \rfloor -  \lfloor g, \lambda_{g^{-1}h} (y) (x \pi_{g^{-1}k} (m)  ) \rfloor \\
        (\text{by }\eqref{eq: relation of pi lambda})&=  \lfloor g, x ( \lambda_{g^{-1}h} (y ) \pi_{g^{-1}k} (m)  ) \rfloor -  \lfloor g, \lambda_{g^{-1}h} (y) (x \pi_{g^{-1}k} (m)  ) \rfloor\\
        &= \lfloor g, x ( \lambda_{g^{-1}h} (y ) \pi_{g^{-1}k} (m)  )  -   \lambda_{g^{-1}h} (y) (x \pi_{g^{-1}k} (m)  ) \rfloor\\
        &= \lfloor g, (x  \lambda_{g^{-1}h} (y )) \pi_{g^{-1}k} (m)   \rfloor \\
        &= \lfloor g, x  \lambda_{g^{-1}h} (y )\rfloor  \lfloor k, m \rfloor =  
        (\lfloor g, x  \rfloor    \lfloor h, y  \rfloor )   \lfloor k, m  \rfloor ,
    \end{align*}
    so that $\Lambda(M)$ is a left module over the Lie algebra $\Lambda(A).$
    We omit the details for the associative case, since it is similar and even shorter.
\end{proof}
 
 As in the proof of Proposition~\ref{p: Lambda preserve covariant representations}, denote by  $\alpha: G \curvearrowright A $  the partial action induced by $\lambda $ and by
$\beta: G \curvearrowright M $ that induced by $\pi .$ The global action $\Lambda (\alpha )$ can be obviously seen as a (global) representation 
$\Lambda (\lambda ) : G \to  {\rm End} _{0}(\Lambda (A)),$ with $\Lambda (\lambda )_g (\lfloor h, x  \rfloor)
= \lfloor gh, x  \rfloor,$ and similarly the global action $\Lambda (\beta)$ can be considered as a (global) representation $\Lambda (\pi ) : G \to  {\rm End}_K (\Lambda (M)).$ Then it is immediate to check that $(\Lambda(\pi), \Lambda(M))$ is a covariant representation of $\Lambda (\lambda ).$

Let  $(\pi, M)$ and $(\xi, N)$ be  covariant representations of $\lambda: G \to \operatorname{End}_0(A)$, and suppose that $\psi: M \to N$ is a morphism between them. A direct computation shows that the mapping $\Lambda(\psi): \Lambda(M) \to \Lambda(N)$, defined by $\Lambda(\psi)(\lfloor g, m \rfloor) = \lfloor g, \psi(m) \rfloor$, is a morphism between the covariant representations $(\Lambda(\pi), \Lambda(M))$ and $(\Lambda(\xi), \Lambda(N))$ of $\Lambda(\lambda)$. Therefore, we have defined a functor 
 
\begin{equation}\label{eq functor Lambda}
  \Lambda: \textbf{CovRep}_{\lambda}(A) \to \textbf{CovRep}_{\Lambda (\lambda )}(\Lambda(A)).
\end{equation}
 Observe that by means of the canonical map $\iota : A \to \Lambda (A)$ any $\Lambda (A)$-module can be seen as an $A$-module.

\begin{definition}\label{d: dilated covrep}
    Let \(\lambda: G \to \operatorname{End}_0(A)\) be  an ideal  partial representation  of $G$ on $A$.
    A \textbf{dilated} covariant representation of   \( \Lambda (\lambda ) \)  is a covariant (global) representation \((\rho, W)\) of \(\Lambda(\lambda)\) equipped with a \(\Lambda(A)\)-module homomorphism \( T: W \to W \) satisfying the following conditions:
    \begin{enumerate}
        \item \( T^2 = T \),
        \item \( xw \in T(W) \) for all \( x \in A \) and \( w \in W \),
        \item \( \lfloor g, x \rfloor u = \lambda_g(x) u \) for all \( g \in G, x \in A, u \in T(W) \),
        \item \( \rho_g T \rho_{g^{-1}} T = T \rho_g T \rho_{g^{-1}} \) for all \( g \in G \).
    \end{enumerate}
\end{definition}

    \begin{remark}
        Observe that the definition of a dilated covariant representation of  \( \Lambda (\lambda ) \)   can be seen as a dilated representation of \(G\) (see \cite{Article_Alves-Batista-Vercruysse_2019_DOPROHA}), which is compatible with the globalization of the partial action on \(A\).
    \end{remark}

If \((\rho, W)\) is a \textbf{dilated} covariant representation of \(\Lambda(\lambda)\), note that condition (2) of Definition~\ref{d: dilated covrep} is equivalent to the following:  
\begin{equation}\label{eq xw}
    xw = T(xw) = xT(w) \quad \text{for all } x \in A \text{ and } w \in W.
\end{equation}

 Let \((\rho_1, W_1)\) and \((\rho_2, W_2)\) be dilated covariant representations of  \( \Lambda (\lambda ) \), equipped with their respective \(\Lambda(A)\)-module homomorphisms \(T_1: W_1 \to W_1\) and \(T_2: W_2 \to W_2\).
A morphism between these representations is a covariant representation morphism \(\psi: W_1 \to W_2\) that makes the following diagram  commutative:

\begin{center}
    \begin{tikzcd}
        {W_1} & {W_2} \\
        {W_1} & {W_2}
        \arrow["\psi", from=1-1, to=1-2]
        \arrow["{T_1}"', from=1-1, to=2-1]
        \arrow["{T_2}", from=1-2, to=2-2]
        \arrow["\psi"', from=2-1, to=2-2]
    \end{tikzcd}
\end{center}

We denote the category of dilated covariant representations of   \(\Lambda( \lambda )\) together with their morphisms by \(\textbf{Dil}_{\lambda}(G, A)\), or simply  \(\textbf{Dil}_{\lambda}(A)\) when there is no ambiguity.

\begin{lemma}\label{l: T for Lambda M}
    Let \(\lambda: G \to \operatorname{End}_0(A)\) be  an ideal partial representation  of \( G \) on \(A\) and \((\pi, M)\) be a covariant representation of \(\lambda\). Then the mapping \( T_\Lambda: \Lambda(M) \to \Lambda(M) \) given by \( \lfloor h, m \rfloor \mapsto \lfloor 1, \pi_h(m) \rfloor \) is a \(\Lambda(A)\)-module homomorphism, and \((\Lambda(\pi), \Lambda(M))\), equipped with \( T_\Lambda \), is an object of   \(\textbf{Dil}_{\lambda}(A)\). Moreover, if 
    $\varphi : (\pi _1, M_1) \to  (\pi _2, M_2) $ is morphism of covariant representations of \(\lambda\), then the map $\Lambda (\varphi ) : \Lambda (M_1) \to \Lambda (M_1),$ such that $\Lambda (\varphi )(\lfloor g,m \rfloor )= \lfloor g, \varphi (m) \rfloor,$
    for any $\in \Lambda (M_1),$ is a morphism 
    \((\Lambda(\pi), \Lambda(M)) \to (\Lambda(\pi), \Lambda(M))\) in the category \(\textbf{Dil}_{\lambda}(A)\).  Consequently, \(\Lambda\) defines a functor from  \(\textbf{CovRep}_{\lambda} (A)\) into \(\textbf{Dil}_{\lambda}(A)\).
\end{lemma}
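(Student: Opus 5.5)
Our plan is to first check that \(T_\Lambda\) is well defined and \(\Lambda(A)\)-linear, then verify the four conditions of Definition~\ref{d: dilated covrep}, and finally treat morphisms. Note that \(T_\Lambda = \iota\circ\tau\), where \(\tau:\Lambda(M)\to M\), \(\lfloor h,m\rfloor\mapsto \pi_h(m)\), is the well-defined linear map of \eqref{eq: def tau} (via Lemma~\ref{l: computation tool for Lambda}). So \(T_\Lambda\) is well defined and linear. Since \(\tau\circ\iota=1_M\), we get \(T_\Lambda^2=\iota\tau\iota\tau=\iota\tau=T_\Lambda\), which is condition (1).

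For \(\Lambda(A)\)-linearity we compute on generators, using \eqref{eq:Analogue} and \eqref{eq: relation of pi lambda}. On one side,
\[
T_\Lambda(\lfloor g,x\rfloor\lfloor h,m\rfloor)=T_\Lambda\lfloor g,x\pi_{g^{-1}h}(m)\rfloor=\lfloor 1,\pi_g(x\pi_{g^{-1}h}(m))\rfloor=\lfloor 1,\lambda_g(x)\pi_h(m)\rfloor .
\]
On the other side, \(\lfloor g,x\rfloor\lfloor 1,\pi_h(m)\rfloor=\lfloor 1,\lambda_g(x)\pi_h(m)\rfloor\) by \eqref{eq:Analogue}, so the two agree. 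The same identity with \(u=\lfloor 1,n\rfloor\) gives condition (3): \(\lfloor g,x\rfloor u=\lfloor 1,\lambda_g(x)n\rfloor=\iota(\lambda_g(x))u\). Here we read \(\lambda_g(x)\) as acting through \(\iota\). Note that \(T_\Lambda(\Lambda(M))=\iota(M)\). For condition (2), \(\iota(x)\lfloor h,m\rfloor=\lfloor 1,x\pi_h(m)\rfloor\in\iota(M)\).

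Condition (4) is the main computation, done on \(\lfloor k,m\rfloor\). The left side is
\[
\rho_gT\rho_{g^{-1}}T\lfloor k,m\rfloor=\rho_gT\lfloor g^{-1},\pi_k m\rfloor=\lfloor g,\pi_{g^{-1}}\pi_k m\rfloor .
\]
The right side is
\[
T\rho_gT\rho_{g^{-1}}\lfloor k,m\rfloor=T\rho_g\lfloor 1,\pi_{g^{-1}k}m\rfloor=\lfloor 1,\pi_g\pi_{g^{-1}k}m\rfloor=\lfloor 1,\varepsilon_g\pi_k m\rfloor .
\]
It remains to identify the two. Since \(\pi_{g^{-1}}\pi_k m\in M_{g^{-1}}\), the defining relation of \(\Lambda(M)\) gives \(\lfloor g,\pi_{g^{-1}}\pi_k m\rfloor=\lfloor 1,\pi_g\pi_{g^{-1}}\pi_k m\rfloor=\lfloor 1,\varepsilon_g\pi_k m\rfloor\), so both sides coincide. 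The main obstacle is keeping track of such uses of the relation \(\lfloor g,\beta_h(m)\rfloor=\lfloor gh,m\rfloor\), which requires \(m\in\im\pi_{h^{-1}}\).

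For a morphism \(\varphi\), the map \(\Lambda(\varphi)\) is already known to be a morphism of covariant representations of \(\Lambda(\lambda)\). It commutes with \(T\) because
\[
\Lambda(\varphi)T_\Lambda\lfloor g,m\rfloor=\lfloor 1,\varphi\pi_g m\rfloor=\lfloor 1,\pi'_g\varphi m\rfloor=T'_\Lambda\Lambda(\varphi)\lfloor g,m\rfloor .
\]
Functoriality is immediate from the formula \(\lfloor g,m\rfloor\mapsto\lfloor g,\varphi(m)\rfloor\).
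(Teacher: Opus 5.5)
Your proof is correct and follows essentially the same route as the paper. Both arguments use the same generator computation for $\Lambda(A)$-linearity and condition (3), and the same identity $\lfloor g,\pi_{g^{-1}}\pi_k m\rfloor=\lfloor 1,\pi_g\pi_{g^{-1}}\pi_k m\rfloor$ (valid since $\pi_{g^{-1}}\pi_k m\in\operatorname{im}\pi_{g^{-1}}$) for condition (4); your factorization $T_\Lambda=\iota\circ\tau$ is a nice touch that makes well-definedness and $T_\Lambda^2=T_\Lambda$ explicit, points the paper simply calls immediate.
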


\begin{proof}
    For \(\lfloor g, x \rfloor \in \Lambda(A)\) and \(\lfloor h, m \rfloor \in \Lambda(M)\), we compute:
    \begin{align*}
        T_\Lambda(\lfloor g,x\rfloor\lfloor h,m\rfloor)
        &= T_\Lambda(\lfloor g,x\pi_{g^{-1}h}(m)\rfloor)
        = \lfloor 1,\pi_g(x\pi_{g^{-1}h}(m)\rfloor \\
        &= \lfloor 1,\lambda_g(x)\pi_g\pi_{g^{-1}h}(m)\rfloor
        =  \lfloor 1,\lambda_g(x)\varepsilon_g\pi_{h}(m)\rfloor \\
        &=  \lfloor 1,e_g\lambda_g(x)\pi_{h}(m)\rfloor
        = \lfloor 1,\lambda_g(x)\pi_{h}(m)\rfloor \\
        &= \lfloor g,x\rfloor\lfloor 1,\pi_h(m)\rfloor 
        = \lfloor g,x\rfloor T_\Lambda(\lfloor h,m\rfloor),
    \end{align*}
    showing that \( T_\Lambda \) is a \(\Lambda(A)\)-module homomorphism.  We see from the above computation that
 $$\lfloor g,x\rfloor T_\Lambda(\lfloor h,m\rfloor) =   \lfloor 1,\lambda_g(x)\pi_{h}(m)\rfloor 
        = \lfloor 1,\lambda_g(x) \rfloor \lfloor 1,  \pi_{h}(m)\rfloor 
        = \lfloor 1,\lambda_g(x) \rfloor T_\Lambda(\lfloor h,m\rfloor), $$ which is conditions (3) of in Definition~\ref{d: dilated covrep}.      The conditions (1) and (2) of Definition~\ref{d: dilated covrep} are immediate.
    To check condition (4), let \(\lfloor h, m \rfloor \in \Lambda(M)\) and \(g \in G\). Since \(\pi_{g^{-1}}\pi_h(m) \in \im \pi_{g^{-1}}\), we have
    \begin{align*}
        \Lambda(\pi)_gT_\Lambda\Lambda(\pi)_{g^{-1}} T_\Lambda(\lfloor h,m\rfloor)
        &= \lfloor g, \pi_{g^{-1}}\pi_h(m)\rfloor
        = \lfloor 1, \pi_g\pi_{g^{-1}}\pi_h(m)\rfloor \\
        &= \lfloor 1,\pi_g \pi_{g^{-1}h}(m)\rfloor
        = T_\Lambda\Lambda(\pi)_{g} T_\Lambda\Lambda(\pi)_{g^{-1}}(\lfloor h,m\rfloor).
    \end{align*}
    Thus, condition (4) is satisfied.
    
  The verification of the fact that $\Lambda $ takes morphisms  from \(\textbf{CovRep}_{\lambda} (A)\) to morphsims in  \(\textbf{Dil}_{\lambda}(A)\) is straightforward. 
\end{proof}

Suppose \((\rho, W)\), equipped with the \(\Lambda(A)\)-module homomorphism \(T: W \to W\), is an object of    \(\textbf{Dil}_{\lambda}(A)\).  For each \( g \in G \), define \(\xi_g = T\rho_g|_{T(W)}\), yielding a mapping \(\xi: G \to \operatorname{End}_K(T(W))\). For any \(g \in G\) and \(u \in T(W)\), the following holds
\[
    \xi_g \xi_{g^{-1}}(u) = T\rho_g T\rho_{g^{-1}}(u) = \rho_g T\rho_{g^{-1}} T(u) = \rho_g T\rho_{g^{-1}}(u) = \rho_g \xi_{g^{-1}}(u),
\]
which implies
\begin{equation}\label{eq xi}
    \xi_g \xi_{g^{-1}} = \rho_g \xi_{g^{-1}}.
\end{equation}
Additionally, for \(x \in A\)  and \(u \in T(W)\), condition (3) of Definition~\ref{d: dilated covrep} gives
\begin{equation}\label{eq relation rho}
    \rho_g(\lambda_{g^{-1}}(x)u) = x\rho_g(u).
\end{equation}

\begin{lemma}\label{le xi W}
    Let \((\rho, W)\) be a dilated covariant representation of \(\Lambda(\lambda)\), equipped with the \(\Lambda(A)\)-module homomorphism \(T: W \to W\), and let \(\xi\) be defined as above. Then, the pair \((\xi, T(W))\) is a covariant representation of \(\lambda\).
\end{lemma}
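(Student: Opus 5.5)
We have to check four things: that $T(W)$ is an $A$-module, that $\xi$ is a partial representation of $G$ on $T(W)$, and conditions (1) and (2) of Definition~\ref{d: partial covariant}.

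\emph{Module structure.} $T(W)$ is an $A$-submodule of $W$. Indeed, since $T$ is a $\Lambda(A)$-module map, for $x\in A$ and $u=T(w)$ we have $xu=T(xw)\in T(W)$; by \eqref{eq xw} this even holds for arbitrary $w\in W$. The $A$-module structure is the one pulled back along $\iota$.

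\emph{Partial representation.} Clearly $\xi_1=T|_{T(W)}=\mathrm{id}$, because $T^2=T$. For the two axioms I will use condition (4) in the form $\rho_gT\rho_{g^{-1}}T=T\rho_gT\rho_{g^{-1}}$, together with its consequence \eqref{eq xi}, $\xi_g\xi_{g^{-1}}=\rho_g\xi_{g^{-1}}$. For axiom (iii) I would compute, on $u\in T(W)$,
\[
\xi_g\xi_h\xi_{h^{-1}}(u)=T\rho_g\rho_h\xi_{h^{-1}}(u)=T\rho_{gh}T\rho_{h^{-1}}(u)=\xi_{gh}\xi_{h^{-1}}(u).
\]
Here the first equality uses \eqref{eq xi} for $h$, and the second uses that $\rho$ is global. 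For axiom (ii) I would use a symmetric argument. Applying (4) with $g^{-1}$ gives $T\rho_{g^{-1}}T\rho_gT=\rho_{g^{-1}}T\rho_gT$ on $T(W)$, hence $\xi_{g^{-1}}\xi_g=\rho_{g^{-1}}\xi_g$. Then
\[
\xi_{g^{-1}}\xi_g\xi_h=\rho_{g^{-1}}T\rho_gT\rho_hT
\]
should reduce to $\xi_{g^{-1}}\xi_{gh}=T\rho_{g^{-1}}T\rho_{gh}T$. To get there, I rewrite $T\rho_g T\rho_h T$ using (4), after inserting $\rho_h\rho_{h^{-1}}$ where needed. I expect this manipulation of the idempotent $T$ under conjugation by $\rho$ to be the main obstacle. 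The key fact to exploit is that the operators $E_g:=\rho_gT\rho_{g^{-1}}$ are idempotents commuting with $T$, and also with each other, which follows by conjugating (4). With this, $T\rho_gT\rho_hT=E_gT\rho_{gh}T$ and $\xi_{g^{-1}}=T\rho_{g^{-1}}T$. Then
\[
\rho_{g^{-1}}E_gT\rho_{gh}T=T\rho_{g^{-1}}T\rho_{gh}T,
\]
since $\rho_{g^{-1}}E_g=T\rho_{g^{-1}}$. This yields (ii).

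\emph{Condition (1).} Let $x\in A$ and $u\in T(W)$. By \eqref{eq relation rho} applied to $\lambda_g(x)$, and using \eqref{eq xw}, I would compute
\[
\xi_g(xu)=T\rho_g(xu)=T\bigl(\lfloor g,x\rfloor\rho_g(u)\bigr)=\lfloor g,x\rfloor T\rho_g(u).
\]
The middle equality holds because $\rho$ is covariant and $\Lambda(\lambda)_g(\lfloor 1,x\rfloor)=\lfloor g,x\rfloor$. The last expression lies in $T(W)$, so condition (3) gives $\lfloor g,x\rfloor T\rho_g(u)=\lambda_g(x)\xi_g(u)$, which is (1).

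\emph{Condition (2).} First, $\varepsilon_g=\xi_g\xi_{g^{-1}}=E_g|_{T(W)}$. Using $E_g=\rho_gT\rho_{g^{-1}}$ together with condition (3), which applies since $T\rho_{g^{-1}}(xu)\in T(W)$, and \eqref{eq relation rho}, I would compute
\[
\varepsilon_g(xu)=\rho_g\,T(\lfloor g^{-1},x\rfloor\rho_{g^{-1}}u)=\rho_g\,\lfloor g^{-1},x\rfloor T\rho_{g^{-1}}u.
\]
Since $T\rho_{g^{-1}}u\in T(W)$, condition (3) rewrites this as $\rho_g\bigl(\lambda_{g^{-1}}(x)\,T\rho_{g^{-1}}u\bigr)$, and then \eqref{eq relation rho} gives $x\,\rho_gT\rho_{g^{-1}}u=x\varepsilon_g(u)$. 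For the remaining equality $e_g(x)u$, I would write $\lfloor g,\lambda_{g^{-1}}(x)\rfloor=\lfloor 1,e_g(x)\rfloor$, which holds by the defining relation of $\Lambda(A)$, and then apply $\rho_g$ and $T$ in the same way.
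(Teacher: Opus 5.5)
Your proposal is correct and follows essentially the same route as the paper. The partial-representation axioms come from \eqref{eq xi} and condition (4); your commuting idempotents $E_g=\rho_gT\rho_{g^{-1}}$ just repackage (4), which the paper applies directly with $g^{-1}$. Condition (1) comes from covariance of $\rho$, $T$ being a $\Lambda(A)$-module map, and condition (3), and condition (2) from \eqref{eq relation rho} together with $\lfloor g,\lambda_{g^{-1}}(x)\rfloor=\lfloor 1,e_g(x)\rfloor$. In that last step, say explicitly that \eqref{eq xw} is what lets you replace $\lambda_{g^{-1}}(x)\rho_{g^{-1}}(u)$ by $\lambda_{g^{-1}}(x)T\rho_{g^{-1}}(u)$ before applying \eqref{eq relation rho}, since $\rho_{g^{-1}}(u)$ need not lie in $T(W)$.
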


\begin{proof}
    First, we verify that \(\xi: G \to \operatorname{End}_K(T(W))\) is a partial representation of \(G\). It is immediate that \(\xi_1 = id_{T(W)}\). Now, let \(g, h \in G\). Then
    \[
        \xi_g \xi_h \xi_{h^{-1}} \overset{\scriptsize\eqref{eq xi}}{=} T\rho_g \rho_h \xi_{h^{-1}} = T\rho_{gh} \xi_{h^{-1}}  = \xi_{gh} \xi_{h^{-1}}.
    \]
   On the other hand, for any $g,h \in G$ and   \(u \in T(W)\),
    \begin{align*}
        \xi _{g^{-1}} \xi_g \xi_h (u)
        \overset{\scriptsize\eqref{eq xi}}{=} \rho _{g^{-1}} \xi_g \xi_h (u)
        &= (\rho _{g^{-1}} T \rho_g T) \rho_h (u)\\
        &\overset{(\flat)}{=}  (T \rho _{g^{-1}} T \rho_g)  \rho_h (u)
        = T \rho _{g^{-1}} T \rho_{gh}  (u)
        =\xi_{g^{-1}} \xi_{gh}(u),
    \end{align*}
    where \((\flat)\) follows from condition (4) of Definition~\ref{d: dilated covrep}.
    Thus, \(\xi_{g^{-1}} \xi_g \xi_h = \xi_{g^{-1}} \xi_{gh}\).
    This proves that \(\xi\) is indeed a partial representation of \(G\).

    Now we show that \((\xi, T(W))\) satisfies conditions (1) and (2) in Definition~\ref{d: partial covariant}. Let \(g \in G\), \(x \in A\), and \(u \in T(W)\). Using condition (3) of Definition~\ref{d: dilated covrep}, we compute:
    \begin{multline*}
        \xi_g(xu)=\xi_g(\lfloor 1,x\rfloor u)=T\rho_g(\lfloor 1,x\rfloor u)=T(\Lambda(\lambda)_g(\lfloor 1,x\rfloor)\rho_g(u))=\\= T(\lfloor g,x\rfloor \rho_g(u))=\lfloor g,x\rfloor T(\rho_g(u))=\lambda_g(x)\xi_g(u),
    \end{multline*}
    showing condition~(1). Additionally, note that
    \[
        \xi_g \xi_{g^{-1}}(xu) \overset{\scriptsize\eqref{eq xi}}{=} \rho_g \xi_{g^{-1}}(xu) = \rho_g(\lambda_{g^{-1}}(x)\xi_{g^{-1}}(u)) \overset{\scriptsize\eqref{eq relation rho}}{=} x\rho_g \xi_{g^{-1}}(u) \overset{\scriptsize\eqref{eq xi}}{=} x\xi_g \xi_{g^{-1}}(u),
    \]
    and on the other hand
    \begin{multline*}
        \lambda_g\lambda_{g^{-1}}(x)u=\lfloor g,\lambda_{g^{-1}}(x)\rfloor u= \rho_g(\Lambda(\lambda)_{g^{-1}}(\lfloor g,\lambda_{g^{-1}}(x)\rfloor)\rho_{g^{-1}}(u))=\rho_g(\lambda_{g^{-1}}(x)\rho_{g^{-1}}(u))=\\\overset{\scriptsize\eqref{eq xw}}{=}\rho_g T(\lambda_{g^{-1}}(x)\rho_{g^{-1}}(u))=\rho_g(\lambda_{g^{-1}}(x)T(\rho_{g^{-1}}(u))\overset{\scriptsize\eqref{eq relation rho}}{=}x\rho_g\xi_{g^{-1}}(u)\overset{\scriptsize\eqref{eq xi}}{=}x\xi_g\xi_{g^{-1}}(u),
    \end{multline*}
    and condition~(2) is proven.
\end{proof}

Let \((\rho_1, W_1)\) and \((\rho_2, W_2)\) be dilated covariant representations of \(\Lambda(\lambda)\),   equipped with their respective \(\Lambda(A)\)-module homomorphisms \(T_1: W_1 \to W_1\) and \(T_2: W_2 \to W_2\). Suppose  that  \(\psi: W_1 \to W_2\) is a morphism between these representations. A straightforward computation shows that the restriction \(\psi|_{T_1(W_1)}: T_1(W_1) \to T_2(W_2)\) is a morphism between the covariant representations \((\xi^1, T_1(W_1))\) and \((\xi^2, T_2(W_2))\) of \(\lambda\), as described in Lemma~\ref{le xi W}. This observation allows  us to define a functor:
\[
    \F: \textbf{Dil}_{\lambda}(A) \to \textbf{CovRep}_{\lambda}(A),
\]
sending the dilated covariant representation \((\rho, W)\) to the covariant representation \((\xi, T(W))\) (as given in Lemma~\ref{le xi W}) and a morphism \(\psi: W_1 \to W_2\) to its restriction \(\psi|_{T_1(W_1)}\).

\begin{theorem}
    Let \(\lambda: G \to \operatorname{End}_0(A)\) be  an ideal  partial representation  of \(G\) on \(A\). Then the functor 
    \(
    \Lambda: \textbf{CovRep}_{\lambda}(A) \to \textbf{Dil}_{\lambda}(A) 
    \)
    is left adjoint to the functor 
    \(
    \F: \textbf{Dil}_{\lambda}(A) \to \textbf{CovRep}_{\lambda}(A).
    \)
\end{theorem}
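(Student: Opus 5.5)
I will establish the adjunction by exhibiting a unit $\eta:\mathrm{Id}\Rightarrow \F\Lambda$ and proving its universal property: for every covariant representation $(\pi,M)$ of $\lambda$, every dilated $(\rho,W,T)$ and every morphism $\psi:(\pi,M)\to \F(\rho,W)=(\xi,T(W))$ in $\textbf{CovRep}_{\lambda}(A)$, there is a unique morphism $\widetilde\psi:(\Lambda(\pi),\Lambda(M),T_\Lambda)\to(\rho,W,T)$ in $\textbf{Dil}_{\lambda}(A)$ with $\widetilde\psi|_{T_\Lambda(\Lambda(M))}\circ\eta_M=\psi$.

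First I take $\eta_M=\iota:M\to\Lambda(M)$, $m\mapsto\lfloor 1,m\rfloor$. Its image lies in $T_\Lambda(\Lambda(M))$, and in fact $T_\Lambda(\Lambda(M))=\iota(M)$. Using $\tau\circ\iota=1_M$ (with $\tau$ as in \eqref{eq: def tau}), $\iota$ is a bijection $M\to T_\Lambda(\Lambda(M))$. I will check that $\iota$ is a morphism $(\pi,M)\to\F\Lambda(\pi,M)$. For the partial representation part, the structure of $\F\Lambda$ is $T_\Lambda\Lambda(\pi)_g$, and $T_\Lambda\Lambda(\pi)_g\lfloor1,m\rfloor=\lfloor1,\pi_g(m)\rfloor$. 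For the module part, $\lfloor1,x\rfloor\lfloor1,m\rfloor=\lfloor1,xm\rfloor$. Naturality of $\eta$ in $M$ is immediate from $\Lambda(\varphi)\lfloor g,m\rfloor=\lfloor g,\varphi(m)\rfloor$.

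For the universal property, given $\psi$ I define $\widetilde\psi(\lfloor g,m\rfloor)=\rho_g(\psi(m))$, which is forced by $G$-equivariance. Hence uniqueness is clear, since the elements $\lfloor g,m\rfloor=\Lambda(\pi)_g\iota(m)$ span $\Lambda(M)$. Well-definedness comes from Lemma~\ref{l: computation tool for Lambda}: for $m\in\pi_{h^{-1}}$-image, i.e.\ $m=\varepsilon_{h^{-1}}(m)$, I need $\rho_g\psi(\pi_h m)=\rho_{gh}\psi(m)$. Now $\psi(\pi_h m)=\xi_h\psi(m)=T\rho_h\psi(m)$, and $\psi(m)=\psi(\pi_{h^{-1}}\pi_h m)=\xi_{h^{-1}}\xi_h\psi(m)=\rho_{h^{-1}}\xi_h\psi(m)$ by \eqref{eq xi}. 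Applying $\rho_h$ gives $\rho_h\psi(m)=\xi_h\psi(m)=\psi(\pi_h m)$, which is what is needed.

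It remains to check the three morphism conditions. $G$-equivariance holds by construction. Commutation with $T$ follows since $\widetilde\psi T_\Lambda\lfloor h,m\rfloor=\psi(\pi_h m)=T\rho_h\psi(m)=T\widetilde\psi\lfloor h,m\rfloor$. The $\Lambda(A)$-linearity is $\widetilde\psi(\lfloor g,x\rfloor\lfloor h,m\rfloor)=\lfloor g,x\rfloor\rho_h\psi(m)$. I reduce it with \eqref{eq:Analogue} to $\rho_h\psi(\lambda_{h^{-1}g}(x)m)=\rho_h(\lambda_{h^{-1}g}(x)\psi(m))$, using that $\psi$ is $A$-linear. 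Covariance of $\rho$ gives $\rho_h(\lfloor 1,y\rfloor w)=\lfloor h,y\rfloor\rho_h(w)$ and $\lfloor h,\lambda_{h^{-1}g}(x)\rfloor=\lfloor g,\varepsilon\text{-part of }x\rfloor$. Here condition (3) of Definition~\ref{d: dilated covrep}, applied to $\psi(m)\in T(W)$, lets me pass between $\lfloor k,y\rfloor u$ and $\lambda_k(y)u$.

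I expect the main obstacle to be this module-linearity step. The identification $\lfloor h,\lambda_{h^{-1}g}(x)\rfloor=\lfloor g,\lambda_{g^{-1}h}\lambda_{h^{-1}g}(x)\rfloor$ differs from $\lfloor g,x\rfloor$, so I must show that the discrepancy $\lfloor g,x-e_{g^{-1}h}(x)\rfloor$ annihilates $\rho_h\psi(m)$. My first attempt is to rewrite $\rho_h\psi(m)=\rho_g\rho_{g^{-1}h}\psi(m)$ and use covariance to reduce the claim to $(x-e_{g^{-1}h}(x))\rho_{g^{-1}h}(u)=0$ for $u\in T(W)$. Conditions (2) and (3) together with \eqref{eq relation rho} should give $y\rho_k(u)=\rho_k(\lambda_{k^{-1}}(y)u)=\rho_k(\lambda_{k^{-1}}(e_k y)u)$, since $\lambda_{k^{-1}}e_k=\lambda_{k^{-1}}$; this handles the discrepancy.

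Finally I would confirm that the bijection $\Hom_{\textbf{Dil}}(\Lambda M,W)\cong\Hom_{\textbf{CovRep}}(M,\F W)$, $\widetilde\psi\mapsto\widetilde\psi\iota$, is natural, which follows formally from naturality of $\eta$.
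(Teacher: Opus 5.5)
Your proposal is correct and is essentially the paper's proof: your $\widetilde\psi(\lfloor g,m\rfloor)=\rho_g\psi(m)$ is exactly the paper's $\delta(\psi)$, and $\widetilde\psi\mapsto\widetilde\psi\circ\iota$ is its $\eta$. The only difference in packaging is that you present it as a natural unit with a universal property, whereas the paper writes an explicit hom-set bijection and checks two naturality squares. You are also more careful than the paper in one place, checking via Lemma~\ref{l: computation tool for Lambda} and \eqref{eq xi} that $\widetilde\psi$ is well defined on $\Lambda(M)$; your module-linearity check through \eqref{eq:Analogue} is a detour compared with the paper's direct route via $\lfloor g,x\pi_{g^{-1}h}(m)\rfloor$, $\psi\pi_k=\xi_k\psi$ and \eqref{eq xw}, but your discrepancy argument using \eqref{eq relation rho} and $e_k(x-e_k(x))=0$ does close it.
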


\begin{proof}
    Let \((\pi, M)\) be an object in \(\textbf{CovRep}_{\lambda}(A)\), and let \((\rho, W)\), equipped with the \(\Lambda(\lambda)\)-module homomorphism \(T: W \to W\), be an object in \(\textbf{Dil}_{\lambda}(A)\). Recall, from Lemma~\ref{l: T for Lambda M}, that \((\Lambda(\pi), \Lambda(M))\) is equipped with the \(\Lambda(A)\)-module homomorphism \(T_\Lambda: \Lambda(M) \to \Lambda(M)\) defined by \(\lfloor g, m \rfloor \mapsto \lfloor 1, \pi_g(m) \rfloor\).

    Define the mapping
    \[
        \eta: \Hom((\Lambda(\pi), \Lambda(M)), (\rho, W)) \to \Hom((\pi, M), (\xi, T(W))),
    \]
    which sends a morphism \(\gamma: \Lambda(M) \to W\) to the morphism \(\eta(\gamma): M \to T(W)\) given by \(m \mapsto \gamma(\lfloor 1, m \rfloor)\). We first verify that \(\eta\) is well-defined, i.e., that \(\eta(\gamma): M \to T(W)\) is a morphism of  dilated covariant representations of \(\lambda\). 
   Note that
    $$T \eta(\gamma)(m) = T \gamma (\lfloor 1, m \rfloor) = \gamma T_{\Lambda}  (\lfloor 1, m \rfloor) = \gamma   (\lfloor 1, m \rfloor) =  \eta(\gamma)(m) $$ for any $m\in M$  so that
    $ \eta(\gamma)(M) \subseteq T(W).$ 
    Next, for  \(x \in A\), \(m \in M\), and \(g \in G\), we compute
    \[
        \eta(\gamma)(xm) = \gamma(\lfloor 1, xm \rfloor) = \gamma(\lfloor 1, x \rfloor \lfloor 1, m \rfloor) = \lfloor 1, x \rfloor \gamma(\lfloor 1, m \rfloor) = x\gamma(\lfloor 1, m \rfloor)
        = x (\eta(\gamma)(m)),
    \]
    showing that \(\eta(\gamma)\) is a morphism of \(A\)-modules. Now, for \(g \in G\),
\begin{multline*}
\eta(\gamma)(\pi_g(m)) = \gamma(\lfloor 1, \pi_g(m) \rfloor) = \gamma(T_\Lambda(\lfloor g, m \rfloor)) =\\= T\gamma(\Lambda(\lambda)_g(\lfloor 1, m \rfloor))
= T\rho_g\gamma(\lfloor 1, m \rfloor) = \xi_g(\eta(\gamma)(m)),
\end{multline*}    
    proving that \(\eta(\gamma)\) is a morphism of partial representations  of $G.$

    Next, define the mapping
    \[
        \delta: \Hom((\pi, M), (\xi, T(W))) \to \Hom((\Lambda(\pi), \Lambda(M)), (\rho, W)),
    \]
    sending a morphism \(\psi: M \to T(W)\) to the morphism \(\delta(\psi): \Lambda(M) \to W\) defined by \(\lfloor h, m \rfloor \mapsto \rho_h\psi(m)\). We verify that \(\delta\) is well-defined, i.e., that \(\delta(\psi)\) is a morphism of covariant representations of \(\Lambda(\lambda)\). For \(\lfloor g, x \rfloor \in \Lambda(A)\) and \(\lfloor h, m \rfloor \in \Lambda(M)\), we have
    \begin{multline*}
        \delta(\psi)(\lfloor g,x\rfloor \lfloor h,m\rfloor)=\delta(\psi)(\lfloor g,x\pi_{g^{-1}h}(m)\rfloor)=\rho_g \psi(x\pi_{g^{-1}h}(m))=\rho_g(x\xi_{g^{-1}h}\psi(m))= \\ =\rho_g(xT\rho_{g^{-1}h}\psi(m)) \overset{\scriptsize\eqref{eq xw}}{=} \rho_g(\lfloor 1,x\rfloor\rho_{g^{-1}h}\psi(m))=\Lambda(\lambda)_g(\lfloor 1,x\rfloor) \rho_h\psi(m)=\lfloor g,x\rfloor \delta(\psi)(\lfloor h,m\rfloor),
    \end{multline*}
    proving that \(\delta(\psi)\) is a morphism of \(\Lambda(A)\)-modules. Additionally,
    \[
        \delta(\psi)(\Lambda(\pi)_g(\lfloor h, m \rfloor)) = \delta(\psi)(\lfloor gh, m \rfloor) = \rho_{gh}\psi(m) = \rho_g(\rho_h\psi(m)) = \rho_g\delta(\psi)(\lfloor h, m \rfloor),
    \]
    verifying that \(\delta(\psi)\) is a morphism of covariant representations. Finally, we check that \(\delta(\psi) \circ T_\Lambda = T \circ \delta(\psi)\):
\begin{multline*}
\delta(\psi)(T_\Lambda(\lfloor h, m \rfloor)) = \delta(\psi)(\lfloor 1, \pi_h(m) \rfloor) = \psi(\pi_h(m)) =\\= \xi_h(\psi(m)) = T\rho_h(\psi(m)) = T\delta(\psi)(\lfloor h, m \rfloor).
\end{multline*}    
A straightforward computation shows that \(\delta\) is the inverse of \(\eta\), and thus \(\eta\) is a bijection.

    To conclude the proof of the theorem, we must show that the bijection \(\eta\) is natural. This requires verifying that for any morphism \(\gamma: M \to N\) in  \(\textbf{CovRep}_{\lambda}(A)\)  and any morphism \(\psi: W \to V\) in  \(\textbf{Dil}_{\lambda}(A)\), the  diagrams 
    \[
        \begin{tikzcd}
            \Hom((\Lambda({\pi}'), \Lambda(N)), (\rho, W)) 
            \arrow[r, "\Lambda(\gamma)^*"] 
            \arrow[d, "\eta"'] 
            & \Hom((\Lambda(\pi), \Lambda(M)), (\rho, W))  
            \arrow[d, "\eta"']  \\
            \Hom(({\pi}', N), (\xi, T(W))) 
            \arrow[r, "\gamma^*"] 
            & \Hom((\pi, M), (\xi, T(W)))  
        \end{tikzcd}
    \]

    and 
    
   \[
        \begin{tikzcd}
           \Hom((\Lambda (\pi), \Lambda(M)), (\rho, W)) 
            \arrow[r, "\psi_*"] 
            \arrow[d, "\eta"'] 
            & \Hom((\Lambda (\pi), \Lambda(M)), (\rho', V)) 
            \arrow[d, "\eta"] \\
            \Hom((\pi , M), (\xi, T(W))) 
            \arrow[r, "\F(\psi)_*"] 
            & \Hom((\pi , M), (\xi', T(V)))
        \end{tikzcd}
    \]  
     are commutative, where 
     \begin{align*}
     &\Lambda(\gamma)^* = \Hom (\Lambda (\gamma ), (\rho , W)), 
     {\gamma} ^*= \Hom (\gamma , (\xi , T(W))),\\
     &\psi _* = \Hom ((\Lambda(\pi), \Lambda (M)), \psi), \F(\psi)_* = \Hom ((\pi,M), \F (\psi) ) .
    \end{align*} 
    

   For let \(\phi \in \Hom((\Lambda({\pi}'), \Lambda(N)), (\rho, W))\). For any \(m \in M\), we have
    \[
        \eta(\Lambda(\gamma)^*(\phi))(m) = \eta(\phi \circ \Lambda(\gamma))(m) = (\phi \circ \Lambda(\gamma))(\lfloor 1, m \rfloor) = \phi(\lfloor 1, \gamma(m) \rfloor),
    \]
    and
    \[
        \gamma^*(\eta(\phi))(m) = (\eta(\phi) \circ \gamma)(m) = \eta(\phi)(\gamma(m)) = \phi(\lfloor 1, \gamma(m) \rfloor),
    \]
     showing  the commutativity of the  first diagram. Now, let \(\varphi \in \Hom((\Lambda(\pi), \Lambda(M)), (\rho, W))\). For any \(m \in M\), we see that
    \[
        \eta(\psi_*(\varphi))(m) = \eta(\psi \circ \varphi)(m) = (\psi \circ \varphi)(\lfloor 1, m \rfloor) = \psi(\varphi(\lfloor 1, m \rfloor)),
    \]
    and
    \[
        \F(\psi)_*(\eta(\varphi))(m) = (\F(\psi) \circ \eta(\varphi))(m) = (\psi|_{T(W)} \circ \eta(\varphi))(m) = \psi(\varphi(\lfloor 1, m \rfloor)),
    \]
    proving that the second diagram is also commutative.
\end{proof}

\section{Covariant representations and semidirect products of Lie algebras}

Let $L$ and $M$ be Lie algebras. We say that $L$ acts by derivations on $M$  if there exists a $K$-bilinear map 
\(
L \times M \to M, (x, m) \mapsto x  m,
\)
such that, for every $x,y\in L$ and $m,n\in M$, the following conditions are satisfied: 
\begin{align} \label{eq:action by derivations}
    \begin{split}
        (x \cdot y) m &= x (y  m) - y  (x  m),\\
        x(m\cdot n) &=(xm)\cdot n+m(x\cdot n).
    \end{split}
\end{align}
Equivalently, $L$ acts  by derivations on $M$ if there exists a Lie algebra homomorphism $$L\to \der(M),$$ where $\der(M)$ is the Lie algebra of derivations of $M$.  Note that the first equality in \eqref{eq:action by derivations} says that $M$ is a left $L$-module.

We recall that if $L$ acts by derivations on $M$, then the semidirect product of $L$ and $M$, which is denoted by $L\ltimes M$, is  the Lie algebra whose underlying vector space is $L \times M$, and  whose  product is given by
\begin{equation}\label{eq semidirec bracket}
    (x,m)\cdot(y,n)=(x\cdot y,m\cdot n+xn-ym).
\end{equation}

\begin{proposition}\label{p:pr of semidirect product}
    Let $L$ and $M$ be Lie algebras, with $L$ acting by derivations  on $M$. Suppose that \(\lambda: G \to \operatorname{End}_0(L)\) and $\pi: G\to \operatorname{End}_0(M)$ are  ideal partial representations, such that  \((\pi, M)\) is a covariant representation of \(\lambda\). Then, the following  hold:
    \begin{enumerate}
        \item The mapping $\lambda\times \pi: G\to \operatorname{End}_0 (L \ltimes M)$, defined by $(\lambda\times \pi)_g: (x,m)\mapsto (\lambda_g(x),\pi_g(m))$, is  an ideal partial representation on \(L\ltimes M\).
        \item The map  
            $\Lambda(L) \times \Lambda(M) \to \Lambda(M),$ defined  by $ 
            (\lfloor g, x \rfloor, \lfloor h, m \rfloor) \mapsto \lfloor g, x \rfloor \lfloor h, m \rfloor = \lfloor g, x \pi_{g^{-1}h}(m) \rfloor$ (as in~\eqref{eq: Lambda(L) module}), is an action by derivations of $\Lambda(L)$ on $\Lambda(M)$. Consequently, the mapping $$\Lambda(\lambda)\times \Lambda(\pi): G\to  \operatorname{End}_{0}(\Lambda(L)\ltimes \Lambda(M))$$ is a (global) representation of $G$ on $\Lambda(L)\ltimes \Lambda(M)$.
    \end{enumerate}
\end{proposition}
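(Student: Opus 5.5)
For (1) I would check directly that each \((\lambda\times\pi)_g\) is a Lie algebra endomorphism of \(L\ltimes M\), and then that \(\lambda\times\pi\) satisfies the partial representation axioms and has ideal images. The partial representation axioms hold componentwise, since \(\lambda\) and \(\pi\) are partial representations and composition in \(\operatorname{End}_0(L\ltimes M)\) is componentwise.

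The homomorphism property means, by \eqref{eq semidirec bracket}, that
\[
\pi_g(m\cdot n+xn-ym)=\pi_g(m)\cdot\pi_g(n)+\lambda_g(x)\pi_g(n)-\lambda_g(y)\pi_g(m).
\]
This follows because \(\pi_g\) is a Lie endomorphism of \(M\) and, by condition (1) of Definition~\ref{d: partial covariant}, \(\pi_g(xn)=\lambda_g(x)\pi_g(n)\).

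For the ideal property, \(\im(\lambda\times\pi)_g=\lambda_g(L)\times\pi_g(M)\). I must show that bracketing with \((y,n)\) stays inside this set. The \(L\)-component \(\lambda_g(x)\cdot y\) lies in \(\lambda_g(L)\), since that is an ideal. The \(M\)-component is \(\pi_g(m)\cdot n+\lambda_g(x)n-y\pi_g(m)\). Its first term lies in \(\pi_g(M)\) because \(\pi_g(M)\) is an ideal. Its second term lies in \(\pi_g(M)\) by \eqref{eq: action of lambda_g (A) }. Its third term lies in \(\pi_g(M)\) because \(\pi_g(M)\) is an \(L\)-submodule, as observed before Proposition~\ref{p: Lambda preserve covariant representations}.

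For (2), Proposition~\ref{p: Lambda preserve covariant representations} already gives that \(\Lambda(M)\) is a \(\Lambda(L)\)-module with the stated formula. Also \(\Lambda(M)\) is a Lie algebra by Theorem~\ref{t: globalization on varieties}, with product \(\lfloor h,m\rfloor\lfloor k,n\rfloor=\lfloor h,m\pi_{h^{-1}k}(n)\rfloor\). So only the derivation identity
\[
\lfloor g,x\rfloor(\lfloor h,m\rfloor\lfloor k,n\rfloor)=(\lfloor g,x\rfloor\lfloor h,m\rfloor)\lfloor k,n\rfloor+\lfloor h,m\rfloor(\lfloor g,x\rfloor\lfloor k,n\rfloor)
\]
remains. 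I would move every term to the representative at \(g\), using \eqref{eq: multiplication equivalence}, \eqref{eq:Analogue} and \eqref{eq: relations of pi}, \eqref{eq: relation of pi lambda}, as in the Lie-module computation in the proof of Proposition~\ref{p: Lambda preserve covariant representations}.

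The left side becomes \(\lfloor g,x\,\pi_{g^{-1}h}(m\pi_{h^{-1}k}(n))\rfloor=\lfloor g,x(\pi_{g^{-1}h}(m)\pi_{g^{-1}k}(n))\rfloor\). The right side becomes \(\lfloor g,(x\pi_{g^{-1}h}(m))\pi_{g^{-1}k}(n)\rfloor+\lfloor g,\pi_{g^{-1}h}(m)(x\pi_{g^{-1}k}(n))\rfloor\). The second term here requires rewriting \(\lfloor h,m\rfloor\lfloor g,x\pi_{g^{-1}k}(n)\rfloor\) via \eqref{eq: multiplication equivalence} at the base point \(g\). The identity then reduces to the derivation property of \(L\) acting on \(M\), applied to \(x\), \(\pi_{g^{-1}h}(m)\) and \(\pi_{g^{-1}k}(n)\).

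The main obstacle is this bookkeeping: I must justify that \(\pi_{g^{-1}h}\pi_{h^{-1}k}(n)\) may be replaced by \(\pi_{g^{-1}k}(n)\) when multiplied by \(\pi_{g^{-1}h}(m)\). I would get this from \eqref{eq: relations of pi} applied within \(M\), i.e. from \(\pi_{g^{-1}h}\pi_{h^{-1}k}=\varepsilon_{g^{-1}h}\pi_{g^{-1}k}\) and the ideal property.

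Finally, for the consequence I would use that the semidirect product \(\Lambda(L)\ltimes\Lambda(M)\) is now defined. Each \(\Lambda(\lambda)_g\times\Lambda(\pi)_g\) is a Lie homomorphism, by the same computation as in (1) with the covariance \(\Lambda(\pi)_g(uw)=\Lambda(\lambda)_g(u)\Lambda(\pi)_g(w)\) noted after Proposition~\ref{p: Lambda preserve covariant representations}. It is multiplicative in \(g\) because both factors are global representations.
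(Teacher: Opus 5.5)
Your proposal is correct and follows essentially the same route as the paper. In (1) you show that $\lambda_g(L)\times\pi_g(M)$ is an ideal using consequences of condition (2) of Definition~\ref{d: partial covariant}, which the paper phrases as $\varepsilon_g(xn-ym)=xn-ym$; in (2) you move all terms to the representative at $g$ via \eqref{eq: relations of pi} and \eqref{eq: multiplication equivalence} and reduce to the derivation identity in $M$, exactly as the paper does, and your explicit check that each $(\lambda\times\pi)_g$ is a Lie endomorphism via $\pi_g(xn)=\lambda_g(x)\pi_g(n)$ fills in a detail the paper leaves implicit.
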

\begin{proof}\,

    \begin{enumerate}[(1)]
        \item The only non-trivial condition required to prove  for this item is  that \(\im (\lambda \times \pi)_g=\im\lambda_g\times \im\pi_g\)  is an ideal of $L\ltimes M$. Recall that an element $(x,m)$ belongs to $\im(\lambda\times \pi)_g$ if and only if $$(\lambda\times \pi)_g(\lambda\times \pi)_{g^{-1}}(x,m)=(x,m).$$ Denote $\varepsilon_g=\pi_g\pi_{g^{-1}}$ and $e_g=\lambda_g\lambda_{g^{-1}}$. Let $(x,m)\in \im(\lambda\times \pi)_g$ and $(y,n)\in L\ltimes M$.  In view of \eqref{eq semidirec bracket}, in order to show that $(x,m)\cdot(y,n) \in \im(\lambda\times \pi)_g,$
         it suffices to verify that $\varepsilon_g(xn-ym)=xn-ym$. Since $(\pi,M)$ is a covariant representation of $\lambda$, we obtain 
            $$\varepsilon_g(xm-yn)=\varepsilon_g(xn)-\varepsilon_g(ym)=
            e_g (x) n-y\varepsilon_g (m)= xn-ym.$$
        \item Since  by Proposition~\ref{p: Lambda preserve covariant representations}  $\Lambda(M)$ is an $\Lambda(L)$-module with this action,  we only need to verify the second equality of~\eqref{eq:action by derivations}. Let $x,\in L$, $g, h, k \in G$, and $m,n \in M$. Then
            \begin{align*}
                \lfloor g, x \rfloor (\lfloor h, m \rfloor\cdot \lfloor k, n \rfloor)&= \lfloor g, x \rfloor \lfloor h, m \cdot \pi_{h^{-1}k}(n) \rfloor\\ & = \lfloor g, x \pi_{g^{-1}h}(m \cdot \pi_{h^{-1}k}(n))\rfloor \\ &=\lfloor g, x (\pi_{g^{-1}h}(m) \cdot  \pi_{g^{-1}k}(n))\rfloor \\&=  \lfloor g, (x \pi_{g^{-1}h}(m)) \cdot  \pi_{g^{-1}k}(n)\rfloor + \lfloor g, \pi_{g^{-1}h}(m) \cdot  (x\pi_{g^{-1}k}(n))\rfloor \\&= (\lfloor g, x \rfloor \lfloor h, m \rfloor)\cdot \lfloor k, n \rfloor + \lfloor h, m \rfloor\cdot (\lfloor g, x \rfloor \lfloor k, n \rfloor),
            \end{align*}
            as required. The last assertion follows from item (1), as $(\Lambda(\pi), \Lambda(M))$ is a (global) covariant representation of $\Lambda(\lambda)$.
    \end{enumerate}
\end{proof}

\begin{theorem}
    Let $L$ and $M$ be Lie algebras, with $L$ acting by derivations on $M$. Suppose that \(\lambda: G \to \operatorname{End}_0(L)\) and $\pi: G\to \operatorname{End}_0(M)$ are  ideal  partial representations  such that  \((\pi, M)\) is a covariant representation of \(\lambda\). Then, the actions $\Lambda(\lambda\times \pi)$ and $\Lambda(\lambda)\times \Lambda(\pi)$ are isomorphic.
    In particular: 
    \[
       \Lambda(L\ltimes M)\cong \Lambda(L)\ltimes\Lambda(M).
    \]
\end{theorem}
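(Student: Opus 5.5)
We will construct an explicit $G$-equivariant Lie algebra isomorphism
\[
\Phi:\Lambda(L\ltimes M)\to \Lambda(L)\ltimes \Lambda(M),\qquad \Phi(\lfloor g,(x,m)\rfloor)=(\lfloor g,x\rfloor,\lfloor g,m\rfloor),
\]
together with its inverse. Write $\gamma$ for the partial action induced by $\lambda\times\pi$; its domains are $\im\lambda_{h}\times\im\pi_{h}$ and $\gamma_h=\alpha_h\times\beta_h$. Consider first the linear map $KG\otimes(L\ltimes M)\to \Lambda(L)\ltimes\Lambda(M)$, $g\otimes(x,m)\mapsto(\lfloor g,x\rfloor,\lfloor g,m\rfloor)$. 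It sends $g\otimes\gamma_h(x,m)-gh\otimes(x,m)$ to the pair of the corresponding relators in $\Lambda(L)$ and $\Lambda(M)$, which vanish. Hence Lemma~\ref{l: computation tool for Lambda} gives a well-defined $\Phi$. In the other direction, $\Lambda(L)\to\Lambda(L\ltimes M)$, $\lfloor g,x\rfloor\mapsto\lfloor g,(x,0)\rfloor$, and $\Lambda(M)\to\Lambda(L\ltimes M)$, $\lfloor g,m\rfloor\mapsto\lfloor g,(0,m)\rfloor$, are well defined by the same lemma, because $\gamma_h(x,0)=(\lambda_h(x),0)$ for $x\in\im\lambda_{h^{-1}}$. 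Their sum $\Psi$ satisfies $\Psi\Phi=\mathrm{id}$ on generators, since $\lfloor g,(x,0)\rfloor+\lfloor g,(0,m)\rfloor=\lfloor g,(x,m)\rfloor$. It also satisfies $\Phi\Psi=\mathrm{id}$ on the generators $(\lfloor g,x\rfloor,0)$ and $(0,\lfloor h,m\rfloor)$. Both maps clearly intertwine $\Lambda(\lambda\times\pi)_k$, which sends $\lfloor g,\cdot\rfloor$ to $\lfloor kg,\cdot\rfloor$, with $\Lambda(\lambda)\times\Lambda(\pi)$, so they are mutually inverse $G$-equivariant linear isomorphisms.

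It remains to show that $\Phi$ is a Lie homomorphism; this is the main computational step. By Lemma~\ref{l: product} and \eqref{eq semidirec bracket},
\[
\lfloor g,(x,m)\rfloor\cdot\lfloor h,(y,n)\rfloor=\lfloor g,(x,m)\cdot(\lambda_{g^{-1}h}(y),\pi_{g^{-1}h}(n))\rfloor,
\]
which equals
\[
\bigl\lfloor g,\bigl(x\cdot\lambda_{g^{-1}h}(y),\ m\cdot\pi_{g^{-1}h}(n)+x\pi_{g^{-1}h}(n)-\lambda_{g^{-1}h}(y)m\bigr)\bigr\rfloor .
\]
Applying $\Phi$ and splitting, the $L$-component is $\lfloor g,x\rfloor\cdot\lfloor h,y\rfloor$, and the term $\lfloor g,m\cdot\pi_{g^{-1}h}(n)\rfloor$ is $\lfloor g,m\rfloor\cdot\lfloor h,n\rfloor$ in $\Lambda(M)$. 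The term $\lfloor g,x\pi_{g^{-1}h}(n)\rfloor$ is $\lfloor g,x\rfloor\lfloor h,n\rfloor$ by the module action of Proposition~\ref{p: Lambda preserve covariant representations}. For the remaining term we use \eqref{eq:Analogue} with the roles of $g$ and $h$ swapped: $\lfloor h,y\rfloor\lfloor g,m\rfloor=\lfloor g,\lambda_{g^{-1}h}(y)m\rfloor$. Collecting these, the result is precisely $(\lfloor g,x\rfloor,\lfloor g,m\rfloor)\cdot(\lfloor h,y\rfloor,\lfloor h,n\rfloor)$ in $\Lambda(L)\ltimes\Lambda(M)$, using the action by derivations from Proposition~\ref{p:pr of semidirect product}(2).

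The only delicate point is the identification of the mixed term $\lambda_{g^{-1}h}(y)m$. Here \eqref{eq:Analogue}, rather than the defining formula of the action, is exactly what is needed. The well-definedness of $\Lambda(L\ltimes M)$ as a Lie algebra is guaranteed by Proposition~\ref{p:pr of semidirect product}(1) and Theorem~\ref{t: globalization on varieties}. This gives the isomorphism of global actions and, in particular, $\Lambda(L\ltimes M)\cong\Lambda(L)\ltimes\Lambda(M)$.
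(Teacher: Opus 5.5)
Your proof is correct and matches the paper's argument step for step: the same map $\lfloor g,(x,m)\rfloor\mapsto(\lfloor g,x\rfloor,\lfloor g,m\rfloor)$, the same inverse $(\lfloor g,x\rfloor,\lfloor h,m\rfloor)\mapsto\lfloor g,(x,0)\rfloor+\lfloor h,(0,m)\rfloor$, and the same bracket computation. You also spell out two points the paper leaves implicit: well-definedness via Lemma~\ref{l: computation tool for Lambda}, and the use of \eqref{eq:Analogue} to identify the mixed term $\lfloor h,y\rfloor\lfloor g,m\rfloor$.
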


\begin{proof}
    We will verify that the mapping 
    \begin{align*}
        \psi: \Lambda(L\ltimes M) &\to \Lambda(L)\ltimes\Lambda(M)\\
        \lfloor g,(x,m)\rfloor &\mapsto (\lfloor g,x\rfloor, \lfloor g,m\rfloor)
    \end{align*}
    is an isomorphism between the actions $\Lambda(\lambda\times \pi)$ and $\Lambda(\lambda)\times \Lambda(\pi)$.  It is easily seen that $\psi $ is a well defined linear mapping. Let us see that $\psi$ is a homomorphims of Lie algebras. Let $\lfloor g,(x,m)\rfloor, \lfloor h,(y,n)\rfloor \in \Lambda(L\ltimes M)$. First we calculate 
    \begin{align*}
        \lfloor g,(x,m)\rfloor\cdot\lfloor h,(y,n)\rfloor &=\lfloor g,(x,m)\cdot(\lambda_{g^{-1}h}(y),\pi_{g^{-1}h}(n))\rfloor\\ &=\lfloor g,(x\cdot \lambda_{g^{-1}h}(y), m\cdot\pi_{g^{-1}h}(n)+x\pi_{g^{-1}h}(n)- \lambda_{g^{-1}h}(y)m)\rfloor.
    \end{align*} 
    With this in mind we get
\begin{multline*}
 \psi(\lfloor g,(x,m)\rfloor\cdot\lfloor h,(y,n)\rfloor )=(\lfloor g,x\cdot \lambda_{g^{-1}h}(y)\rfloor, \lfloor g, m\cdot\pi_{g^{-1}h}(n)\rfloor +\\ +\lfloor g, x\pi_{g^{-1}h}(n)\rfloor- \lfloor g,\lambda_{g^{-1}h}(y)m)\rfloor).
\end{multline*}    
    On the other hand
\begin{multline*}
\hspace{-0.3cm}
 \psi(\lfloor g,(x,m)\rfloor) \cdot \psi (\lfloor h,(y,n)\rfloor )=\, (\lfloor g,x\rfloor, \lfloor g,m\rfloor)\cdot (\lfloor h,y\rfloor, \lfloor h,n\rfloor)\,=\\=  (\lfloor g,x\rfloor\cdot \lfloor h,y\rfloor, \lfloor g,m\rfloor\cdot \lfloor h,n\rfloor +\lfloor g,x\rfloor\lfloor h,n\rfloor - \lfloor h,y\rfloor \lfloor g,m\rfloor)=\\=\lfloor g,x\cdot \lambda_{g^{-1}h}(y)\rfloor +\lfloor g, m\cdot\pi_{g^{-1}h}(n)\rfloor+\lfloor g, x\pi_{g^{-1}h}(n)\rfloor-\lfloor g,\lambda_{g^{-1}h}(y)m)\rfloor,
\end{multline*}    
showing that $\psi$ is a Lie algebra homomorphism. 

    It is straightforward to check that $\psi$ is $G$-equivariant. To verify that $\psi$ is bijetive, consider the mapping $\varphi: \Lambda(L)\ltimes \Lambda(M)\to \Lambda(L\ltimes M)$ defined by $$(\lfloor g,x\rfloor, \lfloor h,m\rfloor)\mapsto \lfloor g,(x,0)\rfloor+ \lfloor h,(0,m)\rfloor.$$  It is directly seen that $\varphi $ is well defined. The equalities 
\begin{align*}
\psi\varphi(\lfloor g,x\rfloor, \lfloor h,m\rfloor)&=\psi(\lfloor g,(x,0)\rfloor+ \lfloor h,(0,m)\rfloor)\\ &=(\lfloor g,x\rfloor,\lfloor g,0\rfloor)+(\lfloor h,0\rfloor,\lfloor h,m\rfloor)=(\lfloor g,x\rfloor,\lfloor h,m\rfloor)
\end{align*}    
    and 
\begin{align*}
\varphi\psi(\lfloor g,(x,m)\rfloor) &=\varphi(\lfloor g,x\rfloor, \lfloor g,m\rfloor)=\lfloor g,(x,0)\rfloor+ \lfloor g,(0,m)\rfloor\\&=\lfloor g,(x,0)+(0,m)\rfloor=\lfloor g,(x,m)\rfloor
\end{align*}
imply that $\varphi$ is the inverse of $\psi$, and therefore $\psi$ is bijective.
\end{proof}

\section*{Acknowledgments} 
 The first named author was partially supported by Funda\c c\~ao de Amparo \`a Pesquisa do Estado de S\~ao Paulo (Fapesp), process n°:  2020/16594-0, and by  Conselho Nacional de Desenvolvimento Cient\'{\i}fico e Tecnol{\'o}gico (CNPq), process n°: 312683/2021-9. The third named author was supported by PRPI da Universidade de São Paulo, process n°: 22.1.09345.01.2.

\bibliographystyle{abbrv}
\bibliography{azu.bib}

\end{document}